\newtheorem{theorem}{Theorem}[section]
\newtheorem{proposition}[theorem]{Proposition}
\newtheorem{lemma}[theorem]{Lemma}
\newtheorem{corollary}[theorem]{Corollary}
\newtheorem{assumption}[theorem]{Assumption}
\newtheorem{remark}[theorem]{Remark}
\newtheorem{definition}[theorem]{Definition}
\newtheorem{example}[theorem]{Example}
\newcommand{\RefRem}[1]{}
\newcommand{\mc}{\mathcal}
\newcommand{\mb}{\mathbb}
\newcommand{\bs}{\boldsymbol}
\renewcommand{\H}{\bs{\mc{H}}}
\newcommand{\Lp}[3][2]{\ensuremath{L^{#1}\mathopen{}\left(#2,\,#3\right)\mathclose{}}}
\newcommand{\LL}[1]{\ensuremath{\mathcal{L}\mathopen{}\left(#1\right)\mathclose{}}}
\newcommand{\Lnorm}{\ensuremath{\mathcal{L}}}
\newcommand{\B}{\Lnorm}
\newcommand{\HSS}[1]{\ensuremath{\mathrm{HS}\mathopen{}\left(#1\right)\mathclose{}}}
\newcommand{\HSnorm}{\ensuremath{\mathrm{HS}}}
\newcommand{\Tr}[1]{\ensuremath{\mathrm{Tr}\mathopen{}\left(#1\right)\mathclose{}}}
\newcommand{\Trp}[1]{\ensuremath{\mathrm{Tr}^+\mathopen{}\left(#1\right)\mathclose{}}}
\newcommand{\State}[1]{\ensuremath{\mathcal{S}\mathopen{}\left(#1\right)\mathclose{}}}
\newcommand{\Trnorm}{\ensuremath{\mathrm{Tr}}}
\newcommand{\X}{\ensuremath{\boldsymbol{\mathcal{H}}}}
\newcommand{\uu}{\psi}
\newcommand{\R}{\ensuremath{\mathbb{R}}}
\newcommand{\C}{\ensuremath{\mathbb{C}}}
\newcommand{\N}{\ensuremath{\mathbb{N}}}
\newcommand{\Z}{\ensuremath{\mathbb{Z}}}
\newcommand{\norm}[2]{\ensuremath{\mathopen{}\left\|#1\right\|_{#2}\mathclose{}}}
\newcommand{\normsq}[2]{\ensuremath{\mathopen{}\left\|#1\right\|^2_{#2}\mathclose{}}}
\newcommand{\set}[1]{\ensuremath{\mathopen{}\left\{#1\right\}\mathclose{}}}
\newcommand{\seq}[1]{\ensuremath{\boldsymbol{#1}}}
\newcommand{\rankb}[2]{\ensuremath{\operatorname{rank}_{#2}\mathopen{}\left(#1\right)\mathclose{}}}
\newcommand{\supp}[1]{\ensuremath{\operatorname{supp}\mathopen{}\left(#1\right)\mathclose{}}}
\newcommand{\tr}[1]{\operatorname{tr}\mathopen{}\left[#1\right]\mathclose{}}
\newcommand{\ptr}[2]{\operatorname{tr}_{#1}\mathopen{}\left[#2\right]\mathclose{}}
\newcommand{\inp}[2]{\mathopen{}\left\langle #1,\, #2\right\rangle\mathclose{}}
\newcommand{\comm}[2]{\mathopen{}\left[#1,\, #2\right]\mathclose{}}
\newcommand{\adj}[1]{{#1}^*}
\newcommand{\id}{\mathbb{I}}
\newcommand{\OB}{\ensuremath{\tilde{O}_B}}
\newcommand{\diff}{\Theta}
\renewcommand{\d}{\mathop{}\,\mathrm{d}}
\newcommand{\card}[1]{\# #1}
\renewcommand{\exp}[1]{\operatorname{exp}\mathopen{}\left[#1\right]\mathclose{}}
\newcommand{\s}[2][]{S^{#1}\mathopen{}\left(#2\right)\mathclose{}}
\newcommand{\dom}[1]{\mathcal{D}\mathopen{}\left(#1\right)\mathclose{}}
\renewcommand{\to}[1]{\mathcal{T}\mathopen{}\left(#1\right)\mathclose{}}
\renewcommand{\oe}[3]{\to{\exp{\int_{#2}^{#3}#1(\tau)\d\tau}}}
\DeclareMathOperator*{\dist}{dist}
\newcommand{\alaw}{\mathrm{Area}}
\newcommand{\cut}{\mathrm{Cut}}
\newcommand{\E}{\ensuremath{\mathbb{E}}}
\renewcommand{\i}{\ensuremath{\mathrm{i}}}
\title{Low-Rank Approximability and Entropy Area Laws for
       Ground States of Unbounded Hamiltonians}
\author{Mazen Ali}
\address{Germany, Ulm University, Institute for Numerical Mathematics}
\email{mazen.ali@uni-ulm.de}
\keywords{Low-Rank Approximation, Entropy Area Laws, Nearest Neighbor Interaction,
Partial Differential Equations, Eigenfunctions, Ground State}
\subjclass[2010]{46N50 (primary), 41A30 (secondary)}
\begin{document}

\begin{abstract}
    We show how local bounded
    interactions in an unbounded Hamiltonian
    lead to eigenfunctions
    with favorable low-rank properties. To this end, we utilize
    ideas from quantum entanglement of multi-particle spin systems.
    We begin by analyzing the connection between entropy
    area laws and low-rank approximability. The characterization for
    1D chains such as Matrix Product States (MPS) / Tensor Trains (TT)
    is rather
    extensive though incomplete.
    We then show that a Nearest Neighbor Interaction (NNI) Hamiltonian
    has eigenfunctions that are approximately separable in a certain
    sense. Under a further assumption on the approximand, we show that this
    implies a constant entropy bound.

    To the best of our knowledge, this work is
    the first analysis of
    low-rank approximability for unbounded Hamiltonians. Moreover,
    it extends previous results on entanglement entropy area laws to
    unbounded operators.
    The assumptions include a variety of self-adjoint operators and have
    a physical interpretation.
    The weak points are the aforementioned
    assumption on the approximand and that the validity is limited to
    MPS/TT formats.
\end{abstract}

\maketitle

\section*{Acknowledgement}
I would like to thank Matthew Hastings for the very helpful discussions
about this work. Particularly for \eqref{eq:thanks1},
\eqref{eq:thanks2} and insights on the
current standing of area law research. I would also like to thank
Alexander Nüßeler for the coffee talks about physics and literature suggestions
that inspired this work.

\section{Introduction}
Can we represent or approximate a function $f:\R^d\rightarrow\C$ with a
complexity that does not grow exponentially in the dimension $d$? Assuming
no specific structure\footnote{By this we mean, e.g., assuming only
classical smoothness
for $f$ which is not sufficient for describing low-rank approximability.}
for $f$, the general answer to this question is
``No'' (see \cite{Schneider, Novak}). This is commonly known as the
\emph{curse of dimensionality}.
Nonetheless, the development of low-rank tensor methods in recent decades
has shown that this curse can be broken for a variety of models
(see \cite{BD, HB, BSU, TN}).
Can we systematically identify the necessary structures that lead to low-rank
approximability?

The (approximation) theory on
this topic is very scarce. We are aware of only one result in
\cite{DahmenTS},
where the authors considered how the inverse of
a Laplace-like operator preserves low-rank structure.
The original motivation for our work was a theory that describes the
structure necessary for low-rank
approximability. We consider a class of
unbounded operators
and show how local interactions in the operator structure lead to favorable
approximability of eigenfunctions w.r.t.\ to growing $d$.

Fortunately, we do not have to start from scratch. In physics the phenomenon
of quantum entanglement has been known since as early as 1935 (see
\cite{EPR}). The study
of multi-particle quantum systems has led to intriguing connections between
the holographic principle, entropy area laws and approximablity by Matrix
Product States (MPS), the latter being a particular kind
of a tensor format also known as the Tensor Train (TT) format.
The quantum theoretical approach to approximability offers an entire
set of powerful tools that we can use to answer the question posed in the
beginning of this introduction.
See \cite{PlenioAL} for an overview of quantum entanglement entropy
area laws.

This work intertwines three things: a review of the ideas from quantum
entanglement relevant to low-rank approximation; a rigorous formulation and
proof of some of these ideas; an area law for a class of Hamiltonians.
The main results
are an approximation estimate for the ground state projection
(Theorem \ref{thm:obolor})
and an area law for 1D continuous variable systems
(Theorem \ref{thm:alaw}). This proof illustrates
essential mathematical ingredients that connect local
interactions with low-rank approximability of eigenfunctions.

The proof is mainly based on ideas from \cite{Hastings},
where the author considers NNI
systems of finite bond dimension\footnote{Mathematically represented by
a matrix acting on a finite dimensional Hilbert space.}.
To the best
of our knowledge, this was the first proof in physics that
a 1D spin system
obeys an area law. Since then other
proofs for discrete variable systems appeared, with sharper bounds and
more general assumptions.

For instance, in \cite{ExpDecay} the authors considered
the more general setting of exponentially decaying correlations
and this bound was later improved in \cite{ExpDecayBetter}.
In \cite{LongRange} the authors considered long range interactions.
In \cite{AradSubExp}
the authors significantly improved the bound for NNI systems
w.r.t.\ the spectral gap.
This was also used in \cite{AndreNNI}, to show that
discretized
NNI operators with general right hand sides
lead to approximable solutions. In \cite{GChain, GLattice} the authors showed
area laws for Gaussian states in 1D and 2D, i.e.,
(continuous variable) harmonic oscillators.

We choose to follow the ideas from \cite{Hastings}, since we
believe these are the most natural to extend to the infinite-dimensional
setting of PDEs (continuous variable systems). Of course, we do not
claim that other approaches are not feasible.

The outline of this paper is as follows. In Section \ref{sec:notation},
we introduce the main notation and terminology used through out the work.
In Section \ref{sec:entropy}, we take a closer look at the connection between entropy
scaling, approximability and discuss the issue of entropy discontinuity.
In Section \ref{sec:alaw}, we introduce the (NNI) Hamiltonian
operator class we consider,
prove some properties of the ground state and conclude with an
entropy scaling bound. The latter will require an assumption on the
approximate ground state projection that requires further investigation.
In Section \ref{sec:summ}, we conclude by summarizing the key steps of the proof,
discuss the assumptions made, evaluate the potential and limitations of our
approach.

\section{Notation}\label{sec:notation}
In this work we consider a separable Hilbert space which we denote by
$\H$, with the corresponding inner product $\inp{\cdot}{\cdot}_{\H}$ and
norm $\norm{\cdot}{\H}$. We assume $\H$ is a topological
tensor product of order $d$,
\begin{align*}
    \H=\mc H_1\otimes\cdots\otimes \mc H_d,
\end{align*}
where $d$ is (a multiple of) the
number of particles in a corresponding model of a quantum system.

Note the distinction between the dimension of a tensor network, which we
denote by $D$, and the order of the tensor product, which we denote by $d$.
The dimension of the tensor network $D$ refers to the spatial dimension
of the graph representing the network. E.g., particles ordered in a chain
represent a 1D system. The corresponding tensor format is MPS or TT, which
is a 1D tensor network. If particles are ordered on a lattice, the
corresponding system is 2D and the corresponding tensor format is, e.g.,
Projected Entangled Pair States (PEPS).
In all these examples $d$ is a multiple of the number of particles and is
typically large. In this work we focus on 1D systems (see next section for
more details).

We assume that $\H$ is equipped with the canonical inner product
\begin{align}\label{eq:caninp}
    \inp{\psi_1\otimes\cdots\otimes\psi_d}{\phi_1\otimes\cdots\otimes\phi_d}_{\H}
    =\prod_{j=1}^d\inp{\psi_j}{\phi_j}_{\mc H_j},\quad\psi_j,\phi_j\in\mc H_j.
\end{align}
Moreover, for any tensor product space of the form
$\H_{\alpha}:=\bigotimes_{j\in\alpha}\mc H_j$,
for $\alpha\subset\{1,\ldots,d\}$, we assume $\H_\alpha$ is equipped with the
canonical inner product as well. This implies many nice properties for
$\H_\alpha$ and tensor product operators on $\H_\alpha$ that we
can take for granted. In particular,
$\|\cdot\|_{\H_\alpha}$ is a uniform crossnorm. See \cite[Chapter 4]{HB}
for more details or \cite[Chapter 6.7]{HB} and
\cite{ANSVD1, ANSVD2} for the case where this is not satisfied.

We consider linear operators $T:\dom{T}\rightarrow\H$, where $\dom{T}$
is some subspace of $\H$ (typically assumed to be dense in $\H$).
We use $\LL{\H}$ to denote the space of all bounded operators. Note that
w.l.o.g.\ we can take
$\dom{T}=\H$ if $\dom{T}$ is dense in $\H$ and $T$ is bounded.
The operator norm is
\begin{align*}
    \|T\|_{\B}=\sup_{\psi\in\H\setminus\{0\}}
    \frac{\|T\psi\|_{\H}}{\|\psi\|_{\H}}.
\end{align*}
The Hilbert adjoint is denoted by $T^*$.
For self-adjoint operators we can define a partial ordering via
\begin{align*}
    T\geq 0\quad \Leftrightarrow\quad\inp{\psi}{T\psi}_{\H}\geq 0,
\end{align*}
for all
$\psi\in\dom{T}$. We refer to $T$ as being positive in this case.
Consequently
\begin{align*}
    T_1\geq T_2\quad \Leftrightarrow\quad T_1-T_2\geq 0.
\end{align*}
Note that positivity already implies we assume self-adjointess.

For a complete orthonormal system $\{e_k\}_{k\in\N}\subset\H$ and
$T\in\LL{\H}$, the \emph{trace} is defined as
\begin{align*}
    \tr{T}:=\sum_{k=1}^\infty \inp{e_k}{Te_k}_{\H}.
\end{align*}
Let $|T|:=(T^*T)^{1/2}$ denote the absolute value of $T$. If $\tr{|T|}<\infty$,
then we say $T$ is trace class in which case $\tr{T}$ is well defined and
independent of the choice of $\{e_k\}_{k\in\N}$. We denote the space of
trace class operators by $\Tr{\H}$ with the corresponding
norm $\|T\|_{\Trnorm}:=\tr{|T|}$. Since $\H$ is a Hilbert space, trace class
operators coincide with nuclear operators. We also use the following notation
\begin{align*}
    \Trp{\H}&:=\left\{T\in\Tr{\H}:T\geq 0\right\},\\
    \State{\H}&:=\left\{T\in\Trp{\H}:\|T\|_{\Trnorm}=\tr{T}=1\right\}.
\end{align*}
An important property of $\Tr{\H}$ is that it is a two sided ideal in
$\LL{\H}$, i.e., for any $\rho\in\Tr{\H}$ and $T\in\LL{\H}$,
we have $\rho T\in\Tr{\H}$ and $T\rho\in\LL{\H}$ (see
\cite[Theorem VI.19]{MathPhys1}).

If for $T\in\LL{\H}$, $\tr{T^*T}<\infty$, then we say $T$ is a
\emph{Hilbert Schmidt operator}
and denote the corresponding space by $\HSS{\H}$. This is
a Hilbert space when equipped with the inner product
$\inp{A}{B}_{\HSnorm}:=\tr{A^*B}$ and the induced norm
$\|T\|_{\HSnorm}:=\sqrt{\inp{T}{T}_{\HSnorm}}$. Note that the product of two Hilbert
Schmidt operators is always trace class.
The introduced spaces compare as follows
\begin{align*}
    &\Tr{\H}\subset\HSS{\H}\subset\LL{\H},\\
    &\|T\|_{\Lnorm}\leq\|T\|_{\HSnorm}\leq\|T\|_{\Trnorm},
\end{align*}
where the inclusions are strict for infinite dimensional Hilbert spaces.

If $T\in\LL{\H}$ is a compact operator, it can be decomposed as
\begin{align*}
    T=\sum_{k=1}^\infty \sigma_k\inp{\cdot}{\varphi_k}_{\H}\psi_k,
\end{align*}
where $\{\varphi_k\}_{k\in\N}$, $\{\psi_k\}_{k\in\N}$ are orthonormal systems
and $\{\sigma_k\}_{k\in\N}$ is a non-increasing sequence of positive numbers.
This is called the \emph{Schmidt decomposition} or
the \emph{singular value decomposition (SVD)}
and is an important
tool for low-rank approximation. The numbers $\sigma_k$ are
called \emph{singular values}.
All trace class and Hilbert Schmidt operators
are compact (but not vice versa).

In quantum mechanics states are modeled by so called \emph{density matrices}
$\rho\in\State{\H}$. We refer to them as \emph{density operators},
to emphasize the
fact that these are, in general, not matrices in this work.
A state is called \emph{pure} if it can be written
as a one dimensional projection, otherwise it is called \emph{mixed}.
I.e., for a pure state there exists $\psi\in\H$ with
$\|\psi\|_{\H}=1$ and $\rho=\inp{\cdot}{\psi}_{\H}\psi$. In general
states are convex combinations of one dimensional
projections of the form
\begin{align*}
    \rho=\sum_{k=1}^\infty \lambda_k\rho_k,
\end{align*}
with positive numbers $\lambda_k$, summing to one. This is a simple
consequence of the spectral decomposition.
The projections $\rho_k$ can be taken to be orthogonal to each other such that
$\tr{\rho}=\sum_{k=1}^\infty\lambda_k=1$. The numbers $\lambda_k$ have a
natural interpretation as probabilities and $\rho$ as a statistical
mixture of pure quantum states.

Suppose we split $\H$ as $\H=\H_A\otimes\H_B$. Then,
the \emph{partial trace} $\ptr{A}{\cdot}:\Tr{\H}\rightarrow\Tr{\H_B}$ is defined
as the unique trace class operator such that for any $E\in\LL{\H_B}$ and any
$T\in\Tr{\H}$
\begin{align*}
    \tr{\ptr{A}{T}E}=\tr{T(\mb I_A\otimes E)}.
\end{align*}
This is useful in order to describe states of subsystems. I.e., if
$\rho\in\State{\H}$ is a density operator, then $\rho_B=\ptr{A}{\rho}
\in\State{\H_B}$ is
also a density operator, describing the state of the subsystem $B$.

We use the shorthand notation
\begin{align*}
    \H_{i,j}:=\mc H_i\otimes\cdots\otimes\mc H_j,\quad 1\leq i\leq j\leq d.
\end{align*}
Suppose $\rho\in\State{\H}$ is a pure state described by $\psi\in\H$. Then,
applying the Hilbert Schmidt decomposition w.r.t.\ the bipartite cut
$\H=\H_{1,j}\otimes\H_{j+1,d}$, we can write
\begin{align*}
    \psi=\sum_{k=1}^{r_j}\sigma_k^j v_k^j\otimes w_k^j,
\end{align*}
where $v_k^j\in\H_{1,j}$, $w_k^j\in\H_{j+1,d}$ and $r_j\in\N\cup\{\infty\}$.
We will frequently use the notation $\sigma_k^j$ for singular values of such
a bipartite cut. The ranks $r_j$ are the TT ranks of $\psi$ since the
chosen cuts $\H=\H_{1,j}\otimes\H_{j+1,d}$ for $1\leq j\leq d-1$ correspond to
the structure of the TT format. Approximability within MPS/TT hinges
on the decay of these particular singular values $\sigma_k^j$ or,
equivalently, on the scaling of the TT ranks $r_j$ for a fixed approximation
accuracy.
\begin{definition}[Approximability]
    We say a function or a state is \emph{approximable} when,
    for a fixed accuracy, the TT ranks grow at most polynomially in $d$.
\end{definition}

For the state of the subsystem on $\H_{1,j}$ we have
the identity
\begin{align*}
    \rho_{1,j}=\ptr{\H_{j+1,d}}{\rho}=\sum_{k=1}^\infty
    (\sigma_k^j)^2\inp{\cdot}{v_k^j}_{\H_{1,j}}v_k^j.
\end{align*}
Thus, we have a correspondence between the probabilities $\lambda_k$
of the state $\rho_{1,j}\in\State{\H_{1,j}}$ and the singular values $\sigma_k^j$ of
$\psi\in\H$.

If $\H=\H_A\otimes\H_B$,
we say an operator $T:\dom{T}\rightarrow\H$ is \emph{supported} on $\H_A$ or
simply $A$, if there exists an operator $T_A:\dom{T_A}\rightarrow\H_A$, such
that
\begin{align*}
    T=T_A\otimes\mb I_B,
\end{align*}
and we write $\supp{T}=\H_A$.

We evolve operators in time according to the standard Heisenberg picture:
for a self-adjoint operator (system Hamiltonian) $H:\dom{H}\rightarrow\H$
and any other operator $T$, we define the time evolution of $T$ as
\begin{align}\label{eq:evolve}
    T(t):=\exp{\i Ht}T\exp{-\i Ht},\quad t\in\R.
\end{align}

Finally, for a state $\rho$, the \emph{von Neumann entropy} is defined as
\begin{align*}
    S(\rho):=-\tr{\rho\log(\rho)},
\end{align*}
and the \emph{R\'{e}nyi entropy} is defined as
\begin{align*}
    S^\alpha(\rho):=\frac{1}{1-\alpha}\log_2(\tr{\rho^\alpha}),
\end{align*}
for $\alpha>0$, $\alpha\neq 1$. One can recover the von Neumann entropy
from the R\'{e}nyi entropy in the limit $\alpha\searrow 1$.

\section{Entropy and Area Laws}\label{sec:entropy}
For an overview of entanglement area laws we refer to \cite{PlenioAL}.
 In the following we introduce the
notion of a general $D$-dimensional area law. For the rest of this work
we focus on 1D area laws.

Let $X$ be a graph representing a constellation of particles.
For the purpose of this introduction and this chapter it is sufficient
to assume $X\subset \Z^D$, i.e., a quantum lattice system, and
that $X$ is finite, i.e., $\#X<\infty$.
Each point
in $X$ represents a separable complex
Hilbert space that acts as the phase space of the
particle corresponding to that point.
We denote the Hilbert space of the entire system by
\begin{align*}
    \X_X:=\bigotimes_{\beta\in X}\X_\beta,
\end{align*}
where $\beta$ are the vertices of $X$.
Throughout this work all tensor product
spaces are equipped and completed w.r.t.\ the canonical norm
(see \eqref{eq:caninp}).

The interactions in this system are given by the system Hamiltonian that we
can write in the general form
\begin{align*}
    H_X=\sum_{I\subset X}\Phi(I),
\end{align*}
where $\Phi(I)$ models interactions within the subset $I$. In this general
form we allow for the
interactions to be trivial, i.e., either $\Phi(I)=0$; or
only one-site operations are present (no interaction): for $a,\;b\in X$
\begin{align*}
    \Phi(a\cup b)=H_a\otimes H_b,
\end{align*}
where $H_a$ and $H_b$ act only on sites $a$ and $b$ respectively.

Given a subset $I\subset X$, we define the set $\partial I$ to be all points in
$I$ that, according to the system Hamiltonian $H_X$, have a non-trivial
interaction with points in the complement $X\setminus I$.
If the current state of the system is described
by $\rho\in\State{\X_X}$, then the state of a subsystem
$I\subset X$ is
described by the partial trace $\rho_I=
\ptr{X\setminus I}{\rho}$. The von Neumann
entropy of any subsystem is then
\begin{align*}
    \s{\rho_I}=-\tr{\rho_I\log_2(\rho_I)}.
\end{align*}
In principle, we could use any entropy measure to formulate area laws
(see \cite{PlenioIntro}).

\begin{definition}[Area Law]
    A pure state $\rho\in\State{\X_X}$
    is said to satisfy
    a \emph{$D$-dimensional\footnote{We emphasize that there is no
    direct relation between $d$ - the number of particles/dimensions - and
    $D$ - the dimension of the area law.} area law}\index{area law} if
    for any $I\subset X$
    the entanglement entropy scales
    proportional to the boundary of $I$, i.e.,
    $\s{\rho_I}\sim\card{\partial I}$.
\end{definition}

This is to be contrasted with
volume laws that state entropy scales as the volume
of $I$,  $\s{\rho_I}\sim\card{I}$. This formulation already suggests that such
laws can be formulated for continuous regions $I$ and $X$, e.g., in quantum
field theory. Indeed, such laws
were originally motivated by similar observations in black hole physics
(see \emph{Bekenstein-Hawking} area law).
For our purposes non-relativistic quantum mechanics with finitely many
particles will suffice.

As an illustration, see
Figure \ref{fig:alaws}. These are examples of 1D and 2D
area laws. In 1D we are
considering chains and thus an area law is particularly simple,
$\s{\rho_I}\sim 2$. On a 2D lattice,
if $I$ encloses $N$ particles,
an area law states $\s{\rho_I}\sim \sqrt{N}$.

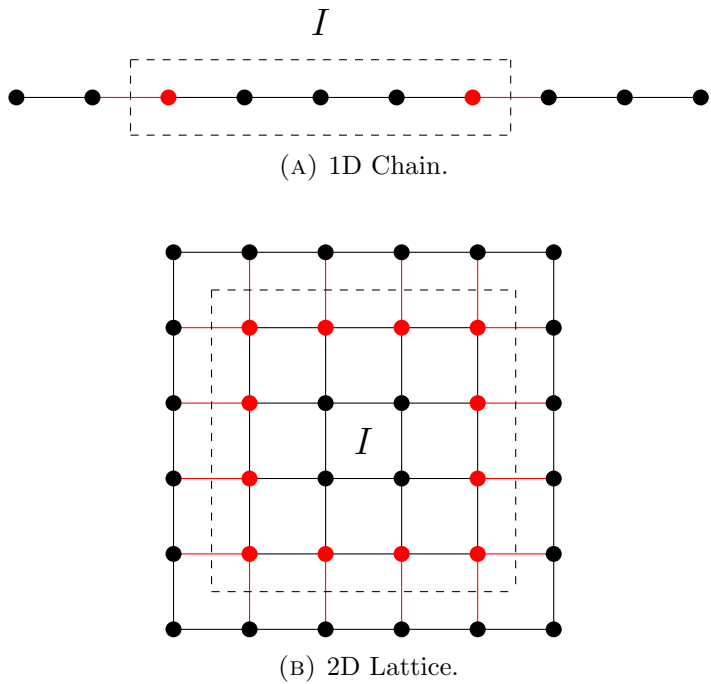
\begin{figure}[h]
    \centering
    \begin{subfigure}{.7\textwidth}
      \centering
      \begin{tikzpicture}
    \coordinate (1) at (0,0);
    \coordinate (2) at (1,0);
    \coordinate (3) at (2,0);
    \coordinate (4) at (3,0);
    \coordinate (5) at (4,0);
    \coordinate (6) at (5,0);
    \coordinate (7) at (6,0);
    \coordinate (8) at (7,0);
    \coordinate (9) at (8,0);
    \coordinate (10) at (9,0);

    \coordinate (11) at (1.5, 0.5);
    \coordinate (12) at (1.5, -0.5);
    \coordinate (13) at (6.5, 0.5);
    \coordinate (14) at (6.5, -0.5);

    \coordinate (15) at (4, 1);

    \draw (1)--(2);
    \draw[red] (2)--(3);
    \draw (3)--(4);
    \draw (4)--(5);
    \draw (5)--(6);
    \draw (6)--(7);
    \draw[red] (7)--(8);
    \draw (8)--(9);
    \draw (9)--(10);

    \draw[dashed] (11)--(12);
    \draw[dashed] (11)--(13);
    \draw[dashed] (12)--(14);
    \draw[dashed] (13)--(14);

    \fill (1) circle (3pt);
    \fill (2) circle (3pt);
    \fill[red] (3) circle (3pt);
    \fill (4) circle (3pt);
    \fill (5) circle (3pt);
    \fill (6) circle (3pt);
    \fill[red] (7) circle (3pt);
    \fill (8) circle (3pt);
    \fill (9) circle (3pt);
    \fill (10) circle (3pt);

    \node at (15) {\Large $I$};
\end{tikzpicture}
      \caption{1D Chain.}
    \end{subfigure}
    \vspace{2em}

    \begin{subfigure}{.7\textwidth}
      \centering
      \begin{tikzpicture}
        \coordinate (11) at (0,0);
        \coordinate (21) at (0,-1);
        \coordinate (31) at (0,-2);
        \coordinate (41) at (0,-3);
        \coordinate (51) at (0,-4);
        \coordinate (61) at (0,-5);

        \coordinate (12) at (1,0);
        \coordinate (22) at (1,-1);
        \coordinate (32) at (1,-2);
        \coordinate (42) at (1,-3);
        \coordinate (52) at (1,-4);
        \coordinate (62) at (1,-5);

        \coordinate (13) at (2,0);
        \coordinate (23) at (2,-1);
        \coordinate (33) at (2,-2);
        \coordinate (43) at (2,-3);
        \coordinate (53) at (2,-4);
        \coordinate (63) at (2,-5);

        \coordinate (14) at (3,0);
        \coordinate (24) at (3,-1);
        \coordinate (34) at (3,-2);
        \coordinate (44) at (3,-3);
        \coordinate (54) at (3,-4);
        \coordinate (64) at (3,-5);

        \coordinate (15) at (4,0);
        \coordinate (25) at (4,-1);
        \coordinate (35) at (4,-2);
        \coordinate (45) at (4,-3);
        \coordinate (55) at (4,-4);
        \coordinate (65) at (4,-5);

        \coordinate (16) at (5,0);
        \coordinate (26) at (5,-1);
        \coordinate (36) at (5,-2);
        \coordinate (46) at (5,-3);
        \coordinate (56) at (5,-4);
        \coordinate (66) at (5,-5);

        \coordinate (1) at (0.5,-0.5);
        \coordinate (2) at (0.5,-4.5);
        \coordinate (3) at (4.5,-0.5);
        \coordinate (4) at (4.5,-4.5);

        \coordinate (5) at (5.5,0.5);
        \coordinate (6) at (2.5,-2.5);

        \draw (11)--(12);
        \draw (12)--(13);
        \draw (13)--(14);
        \draw (14)--(15);
        \draw (15)--(16);

        \draw[red] (21)--(22);
        \draw (22)--(23);
        \draw (23)--(24);
        \draw (24)--(25);
        \draw[red] (25)--(26);

        \draw[red] (31)--(32);
        \draw (32)--(33);
        \draw (33)--(34);
        \draw (34)--(35);
        \draw[red] (35)--(36);

        \draw[red] (41)--(42);
        \draw (42)--(43);
        \draw (43)--(44);
        \draw (44)--(45);
        \draw[red] (45)--(46);

        \draw[red] (51)--(52);
        \draw (52)--(53);
        \draw (53)--(54);
        \draw (54)--(55);
        \draw[red] (55)--(56);

        \draw (61)--(62);
        \draw (62)--(63);
        \draw (63)--(64);
        \draw (64)--(65);
        \draw (65)--(66);

        \draw (11)--(21);
        \draw (21)--(31);
        \draw (31)--(41);
        \draw (41)--(51);
        \draw (51)--(61);

        \draw[red] (12)--(22);
        \draw (22)--(32);
        \draw (32)--(42);
        \draw (42)--(52);
        \draw[red] (52)--(62);

        \draw[red] (13)--(23);
        \draw (23)--(33);
        \draw (33)--(43);
        \draw (43)--(53);
        \draw[red] (53)--(63);

        \draw[red] (14)--(24);
        \draw (24)--(34);
        \draw (34)--(44);
        \draw (44)--(54);
        \draw[red] (54)--(64);

        \draw[red] (15)--(25);
        \draw (25)--(35);
        \draw (35)--(45);
        \draw (45)--(55);
        \draw[red] (55)--(65);

        \draw (16)--(26);
        \draw (26)--(36);
        \draw (36)--(46);
        \draw (46)--(56);
        \draw (56)--(66);

        \draw[dashed] (1)--(2);
        \draw[dashed] (1)--(3);
        \draw[dashed] (2)--(4);
        \draw[dashed] (3)--(4);

        \fill (11) circle (3pt);
        \fill (21) circle (3pt);
        \fill (31) circle (3pt);
        \fill (41) circle (3pt);
        \fill (51) circle (3pt);
        \fill (61) circle (3pt);

        \fill (12) circle (3pt);
        \fill[red] (22) circle (3pt);
        \fill[red] (32) circle (3pt);
        \fill[red] (42) circle (3pt);
        \fill[red] (52) circle (3pt);
        \fill (62) circle (3pt);

        \fill (13) circle (3pt);
        \fill[red] (23) circle (3pt);
        \fill (33) circle (3pt);
        \fill (43) circle (3pt);
        \fill[red] (53) circle (3pt);
        \fill (63) circle (3pt);

        \fill (14) circle (3pt);
        \fill[red] (24) circle (3pt);
        \fill (34) circle (3pt);
        \fill (44) circle (3pt);
        \fill[red] (54) circle (3pt);
        \fill (64) circle (3pt);

        \fill (15) circle (3pt);
        \fill[red] (25) circle (3pt);
        \fill[red] (35) circle (3pt);
        \fill[red] (45) circle (3pt);
        \fill[red] (55) circle (3pt);
        \fill (65) circle (3pt);

        \fill (16) circle (3pt);
        \fill (26) circle (3pt);
        \fill (36) circle (3pt);
        \fill (46) circle (3pt);
        \fill (56) circle (3pt);
        \fill (66) circle (3pt);

        \node at (6) {\Large $I$};
\end{tikzpicture}
      \caption{2D Lattice.}
    \end{subfigure}
    \caption{1D and 2D area laws.}
    \label{fig:alaws}
\end{figure}

Corresponding to the structure of $X$ and interactions in $H_X$,
one can tailor tensor formats
in order to efficiently represent such states on $\X$. See
\cite{TN, BSU} for an overview. For 1D chains the corresponding tensor
format is referred to as a Matrix Product State (MPS) in the physics
community, or Tensor Train (TT) in mathematics. In 1D systems there is a
strong link between area laws and low-rank approximability. Due to this and the
fact that, unlike in general tensor networks, the best approximation
problem in TT is well posed, we focus
on 1D area laws.


\subsection{Entropy Scaling and Approximation}\label{sec:entapp}

We begin by illustrating the connection between entropy scaling in $d$
and low-rank approximation in 1D systems.
It can be summarized by Table \ref{tab:summ}. Recall the definition of the
R\'{e}nyi entropy
\begin{align*}
    S^\alpha(\rho):=(1-\alpha)^{-1}\log_2(\tr{\rho^\alpha}),\quad
    \alpha>0,\;\alpha\neq 1.
\end{align*}
And, since $\lim_{\alpha\searrow 1}S^\alpha(\rho)=S(\rho)$, we use
$S^{\alpha=1}$ to denote the von Neumann entropy.

\begin{table}
    \centering
    \begin{tabular}{l | c | c | c | c}
        Entropy        &const               &$\log_2(\card{I})$         &$(\card{I})^{p<1}$        & $\card{I}$ \\
        \hline\hline
        $S^{\alpha<1}$ &\cellcolor{green!65}&\cellcolor{green!65}&\cellcolor{blue!25}&\cellcolor{blue!25}\\
        $S^{\alpha=1}$ &\cellcolor{blue!25} &\cellcolor{blue!25} &\cellcolor{blue!25}&\cellcolor{red!65} \\
        $S^{\alpha>1}$ &\cellcolor{blue!25} &\cellcolor{blue!25} &\cellcolor{red!65} &\cellcolor{red!65} \\
    \end{tabular}
    \caption{Entropy vs.\ Approximability 1D systems
    \cite{EntSim}. $\card{I}$ denotes the length of the subsystem.
    \textcolor{green!65}{$\blacksquare$} Approximable,
    \textcolor{blue!25}{$\blacksquare$} undetermined (both are possible),
    \textcolor{red!65}{$\blacksquare$} not approximable.}
    \label{tab:summ}
\end{table}

Throughout this work we will use the shorthand notation
\begin{align}\label{eq:xij}
    \X_{i,j}:=\bigotimes_{k=i}^j\X_k,
\end{align}
where, as mentioned above, all tensor product spaces are equipped
and completed w.r.t.\ the canonical norm.
The following result is based\footnote{The main idea remains the same with
a more rigorous proof.} upon \cite{EntSim} and \cite{VCMPS}.

\begin{proposition}\label{prop:alaw_app}
    Let $\uu\in\X$,
    $\norm{\uu}{\X}=1$,
    $\rho:=\inp{\cdot}{\uu}_{\X}\uu\in\State{\X}$,
    $\rho_{1,j}:=\ptr{\X_{j+1,d}}{\rho}\in\State{\X_{1,j}}$.
    Suppose $\s[\alpha]{\rho_{1,j}}<\infty$
    if $\alpha<1$ (for $\alpha>1$ the entropy is clearly finite since
    $\rho\in\State{\X}$).
    Let $\varepsilon_j(r)$ denote the truncation error for a bipartite cut
    $\X\cong\X_{1,j}\otimes\X_{j+1,d}$,
    $\varepsilon^2_j(r)=\sum_{k=r+1}^\infty(\sigma_k^j)^2$. Then,
    \begin{align}
        \s[\alpha]{\rho_{1,j}}&\geq\frac{\alpha}{1-\alpha}
        \log_2\left(\frac{\varepsilon_j^2(r)}{\alpha}\right)+
        \log_2\left(\frac{r-1}{1-\alpha}\right),
        \quad 0<\alpha<1,\label{eq:entL}\\
        \s[\alpha]{\rho_{1,j}}&\leq
        \frac{\alpha}{1-\alpha}\log_2(1-\varepsilon^2_j(r))+\log_2(r),
        \quad \alpha>1\label{eq:entU}.
    \end{align}
\end{proposition}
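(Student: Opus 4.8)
The plan is to reduce both inequalities to elementary scalar statements about the spectrum of $\rho_{1,j}$. By the identity for $\rho_{1,j}$ recorded in the excerpt, its eigenvalues are $\lambda_k:=(\sigma_k^j)^2$, a non-increasing summable sequence with $\sum_k\lambda_k=1$, and by definition $\varepsilon_j^2(r)=\sum_{k>r}\lambda_k$. Since $\tr{\rho_{1,j}^\alpha}=\sum_k\lambda_k^\alpha=:T$, I would first observe that both claims are in fact statements about $T$: because $(1-\alpha)^{-1}>0$ for $0<\alpha<1$ and $(1-\alpha)^{-1}<0$ for $\alpha>1$, the desired lower bound \eqref{eq:entL} and the desired upper bound \eqref{eq:entU} both amount to a \emph{lower} bound on $T$, after which one applies $\tfrac{1}{1-\alpha}\log_2(\cdot)$ with the appropriate sign. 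I would record these reductions explicitly so that the rest is analysis on $\{\lambda_k\}$.

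The case $\alpha>1$ of \eqref{eq:entU} is short. Discarding the nonnegative tail gives $T\ge\sum_{k=1}^r\lambda_k^\alpha$, and since $t\mapsto t^\alpha$ is convex for $\alpha>1$, Jensen's inequality (equivalently the power-mean inequality) applied to $\lambda_1,\dots,\lambda_r$ yields $\sum_{k=1}^r\lambda_k^\alpha\ge r^{1-\alpha}\big(\sum_{k=1}^r\lambda_k\big)^\alpha=r^{1-\alpha}(1-\varepsilon_j^2(r))^\alpha$. Taking $\tfrac{1}{1-\alpha}\log_2(\cdot)$ and using $\tfrac{1}{1-\alpha}<0$ gives \eqref{eq:entU} at once.

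The case $0<\alpha<1$ of \eqref{eq:entL} is the substantive one: now $t\mapsto t^\alpha$ is concave, so Jensen runs the wrong way, and I would instead use monotonicity of the spectrum together with the sign of $\alpha-1$. Setting $x:=\lambda_r$, I would bound the tail via $\lambda_k\le x$ and $\alpha-1<0$, so that $\lambda_k^\alpha=\lambda_k\lambda_k^{\alpha-1}\ge x^{\alpha-1}\lambda_k$ and hence $\sum_{k>r}\lambda_k^\alpha\ge \varepsilon_j^2(r)\,x^{\alpha-1}$; and I would bound the head via $\lambda_k\ge x$ for $k\le r-1$, giving $\sum_{k=1}^{r-1}\lambda_k^\alpha\ge(r-1)x^\alpha$. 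Adding these (and discarding the nonnegative $k=r$ term) produces $T\ge g(x)$ with $g(x):=(r-1)x^\alpha+\varepsilon_j^2(r)\,x^{\alpha-1}$.

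The decisive step is that, although $x=\lambda_r$ is determined by the state, the estimate $T\ge g(\lambda_r)$ stays valid if $g(\lambda_r)$ is replaced by $\min_{x>0}g(x)$. Since $g(x)\to\infty$ as $x\to0^+$ and as $x\to\infty$, the minimum sits at the interior critical point $x^{*}=\varepsilon_j^2(r)(1-\alpha)/\big((r-1)\alpha\big)$, and a direct computation gives $g(x^{*})=\big(\varepsilon_j^2(r)/\alpha\big)^{\alpha}\big((r-1)/(1-\alpha)\big)^{1-\alpha}$; applying the order-preserving map $\tfrac{1}{1-\alpha}\log_2(\cdot)$ then yields \eqref{eq:entL}. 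I expect the main obstacle to be locating precisely this argument rather than executing it: the naive estimate $\lambda_r\le 1/r$ loses the $\alpha$-dependent constants and delivers only the strictly weaker bound $T\ge(\varepsilon_j^2(r))^\alpha(r-1)^{1-\alpha}$, so the essential idea is to keep the threshold $x$ free and optimize, which is exactly what reproduces the Young-type constants $\alpha^{-\alpha}(1-\alpha)^{-(1-\alpha)}$ in the statement. I would also remark that the finiteness hypothesis on $\s[\alpha]{\rho_{1,j}}$ is needed only for the statement to be nonvacuous, as the inequality holds trivially when $T=\infty$.
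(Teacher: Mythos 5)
Your proof is correct, and it reaches both bounds by a genuinely different and more elementary route than the paper. The paper proves the proposition through majorization and Schur concavity of R\'{e}nyi entropies: for $0<\alpha<1$ it constructs an explicit majorizing sequence (parameterized by a step height $h$), verifies the partial-sum inequalities case by case, and invokes Lemma \ref{lemma:schur} --- a truncation-and-limit extension of Schur concavity to infinite sequences, which is itself a nontrivial piece of work --- before bounding and minimizing the entropy of that comparison sequence; for $\alpha>1$ it builds a majorized sequence (uniform head, unchanged tail) and argues the same way. You bypass the majorization machinery entirely: your head/tail split at index $r$, with the monotonicity bounds $\lambda_k\ge\lambda_r$ for $k\le r-1$ and $\lambda_k^\alpha\ge\lambda_k\lambda_r^{\alpha-1}$ for $k>r$, gives $\tr{\rho_{1,j}^\alpha}\ge g(\lambda_r)$ directly, and for $\alpha>1$ a one-line Jensen (power-mean) inequality replaces the construction. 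Notably, the analytic core coincides: your $g(x)=(r-1)x^\alpha+\varepsilon_j^2(r)\,x^{\alpha-1}$ is exactly the paper's $f(h)$, with the same critical point $x^*=\varepsilon_j^2(r)(1-\alpha)/(\alpha(r-1))$ and the same optimal value, so the constants in \eqref{eq:entL} agree; your freeing of the threshold $x$ and minimizing plays the same role as the paper's weakening of the $h$-dependent bound to $f(h^*)$. What each route buys: yours is shorter, self-contained, and avoids the infinite-sequence Schur-concavity lemma (arguably the most delicate step in the paper's argument); the paper's majorization route is the standard one in the cited literature and is more modular, since once majorization is established the conclusion transfers to any Schur-concave entropy functional without redoing the computation. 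One small point you should make explicit: when $\varepsilon_j^2(r)=0$ or $r=1$ the right-hand side of \eqref{eq:entL} is $-\infty$ and the claim is vacuous, while otherwise $\lambda_r\ge\lambda_{r+1}>0$, so $g(\lambda_r)$ is well defined and the replacement of $g(\lambda_r)$ by $\min_{x>0}g(x)$ is legitimate; this mirrors the paper's separate treatment of the degenerate case $h=0$.
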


Before we proceed with the proof, we require the following lemma.

\begin{lemma}\label{lemma:schur}
    Let $\alpha>0$, $\alpha\neq 1$
    and $\seq{a}=\set{a_k:\,k\in\N}$, $\seq{b}=\set{b_k:\,k\in\N}$
    be two non-negative,
    non-increasing sequences such that
    \begin{align*}
        \sum_{k=1}^\infty a_k^\alpha<\infty,\quad
        \sum_{k=1}^\infty b_k^\alpha<\infty,\quad
        \sum_{k=1}^\infty a_k=\sum_{k=1}^\infty b_k<\infty,
    \end{align*}
    and for any $m\in\N$,
    \begin{align*}
        \sum_{k=1}^ma_k\geq\sum_{k=1}^mb_k.
    \end{align*}
    Then,
    \begin{align*}
        \s[\alpha]{\seq{a}}\leq\s[\alpha]{\seq{b}},
    \end{align*}
    where the entropy of a sequence is given as
    $\s[\alpha]{\seq{a}}=\frac{1}{1-\alpha}\log_2(\sum_{k=1}^\infty a_k^\alpha)$.
\end{lemma}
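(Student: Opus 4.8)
My plan is to read the three hypotheses on \seq{a} and \seq{b} as the statement that \seq{a} \emph{majorizes} \seq{b}, and then to establish the resulting Schur-type inequality for the map $\seq{x}\mapsto\sum_k x_k^\alpha$ through an integral (``layer-cake'') representation, which is what lets me handle the infinite sequences cleanly. The first step is to repackage the majorization hypothesis into a single family of inequalities. Setting $A_m=\sum_{k=1}^m a_k$ and $B_m=\sum_{k=1}^m b_k$ (with $A_0=B_0=0$), and using that both sequences are non-increasing and, being summable, tend to $0$, for each fixed $t>0$ only finitely many $a_k$ exceed $t$; since $m\mapsto A_m-mt$ rises exactly while $a_m>t$ and falls thereafter, one obtains
\[
  \sum_{k=1}^\infty (a_k-t)_+ = \sup_{m\ge 0}(A_m-mt),
\]
and likewise for \seq{b} (for $t=0$ both sides equal the common total sum). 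Because $A_m\ge B_m$ for every $m$ gives $A_m-mt\ge B_m-mt$ termwise, passing to suprema yields $\sum_k (a_k-t)_+\ge\sum_k (b_k-t)_+$ for all $t\ge 0$.

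The second step converts this into a comparison of the $\alpha$-sums via the integral representation of $\phi(x)=x^\alpha$ on $[0,\infty)$. For $\alpha>1$ the function $\phi$ is convex with $\phi(0)=\phi'(0)=0$, so integration by parts gives $x^\alpha=\int_0^\infty (x-t)_+\,\phi''(t)\,dt$ with $\phi''(t)=\alpha(\alpha-1)t^{\alpha-2}\ge 0$. Since every term is non-negative and $\sum_k a_k^\alpha<\infty$, $\sum_k b_k^\alpha<\infty$ by hypothesis, Tonelli's theorem permits interchanging sum and integral, whence
\[
  \sum_k a_k^\alpha-\sum_k b_k^\alpha
  =\int_0^\infty\Big(\sum_k (a_k-t)_+-\sum_k (b_k-t)_+\Big)\phi''(t)\,dt\ge 0.
\]
For $0<\alpha<1$ the function is concave; writing $\min(x,t)=x-(x-t)_+$ and $x^\alpha=\int_0^\infty \min(x,t)\,\bigl(-\phi''(t)\bigr)\,dt$ with $-\phi''(t)=\alpha(1-\alpha)t^{\alpha-2}\ge 0$, the identity $\sum_k\min(a_k,t)-\sum_k\min(b_k,t)=-\bigl(\sum_k (a_k-t)_+-\sum_k (b_k-t)_+\bigr)\le 0$ gives instead $\sum_k a_k^\alpha\le\sum_k b_k^\alpha$.

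The last step unwinds the definition $\s[\alpha]{\seq{a}}=\tfrac{1}{1-\alpha}\log_2\bigl(\sum_k a_k^\alpha\bigr)$, tracking the sign of the prefactor. For $\alpha>1$ one has $\tfrac{1}{1-\alpha}<0$ together with $\sum_k a_k^\alpha\ge\sum_k b_k^\alpha$, so applying the increasing function $\log_2$ and then multiplying by the negative prefactor reverses the inequality to $\s[\alpha]{\seq{a}}\le\s[\alpha]{\seq{b}}$. For $0<\alpha<1$ one has $\tfrac{1}{1-\alpha}>0$ together with $\sum_k a_k^\alpha\le\sum_k b_k^\alpha$, giving the same conclusion directly. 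The two sign flips across $\alpha=1$ thus conspire to yield a single uniform inequality.

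I expect the main obstacle to be the passage from finite vectors to infinite sequences. In finite dimensions this is the classical Karamata inequality for Schur-convex functions, but its usual proof by Abel summation leaves a boundary term proportional to $(A_n-B_n)$ times a chord slope of $x^\alpha$, and for $0<\alpha<1$ those slopes blow up as the arguments approach $0$, making the vanishing of that term genuinely delicate. The layer-cake representation is precisely what removes this difficulty: each $\sum_k (a_k-t)_+$ is a finite sum for $t>0$, the only analytic input required is the assumed finiteness of $\sum_k a_k^\alpha$ and $\sum_k b_k^\alpha$, and every interchange of summation and integration is justified by Tonelli's theorem since the integrands built from $(x-t)_+$, $\min(x,t)$ and $|\phi''|$ are non-negative.
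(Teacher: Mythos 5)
Your proof is correct, but it takes a genuinely different route from the paper. The paper reduces the infinite case to the finite one: it constructs finitely supported truncations $\tilde{\seq{a}}^n$, $\tilde{\seq{b}}^n$ that preserve the majorization relation, invokes Schur concavity of the R\'enyi entropy for finite sequences as a known fact, and then passes to the limit $n\rightarrow\infty$, where the delicate point is showing $\sum_k \tilde{a}_k^\alpha\rightarrow\sum_k a_k^\alpha$ (the case $\alpha<1$ needs a separate estimate for the plateau terms $m\,a_{n+1}^\alpha$). You instead prove the underlying Schur-type inequality directly for infinite sequences: you convert the partial-sum domination $\sum_{k\le m}a_k\geq\sum_{k\le m}b_k$ into the layer-cake comparison $\sum_k(a_k-t)_+\geq\sum_k(b_k-t)_+$ via the identity $\sum_k(a_k-t)_+=\sup_{m\geq 0}\left(\sum_{k\le m}a_k-mt\right)$, and then integrate against $\phi''(t)=\alpha(\alpha-1)t^{\alpha-2}$ (convex case $\alpha>1$, using $(x-t)_+$) respectively $-\phi''(t)$ (concave case $\alpha<1$, using $\min(x,t)=x-(x-t)_+$), with Tonelli justifying every interchange since all terms are non-negative. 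This is essentially a self-contained Karamata/Hardy--Littlewood--P\'olya argument; it buys you several things: no truncation construction and no limiting argument at all, no need to treat sequences with vanishing tails separately (the paper must assume $a_n,b_n\neq 0$ and defer that case), and transparency about where each hypothesis enters --- the equal-sums condition is used only for $0<\alpha<1$, so for $\alpha>1$ your argument proves the stronger statement that weak submajorization alone suffices. What the paper's route buys in exchange is economy of ideas: it reuses the standard finite-dimensional Schur-concavity result as a black box, so the only new content is the approximation argument, which fits the way the lemma is applied later (majorizing sequences constructed explicitly in the proof of Proposition \ref{prop:alaw_app}).
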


\begin{proof}
    The above property holds for finite sequences since entropy is Schur\index{Schur concave}
    concave. We thus assume that there is no such $n\in\N$ such that $a_n=0$
    or $b_n=0$ and reduce the above lemma to the case of finite sequences.
    We define truncated sequences $\tilde{\seq{a}}^n$ and $\tilde{\seq{b}}^n$ for
    any $n\in\N$
    such that
    \begin{align*}
        \tilde{a}_i&=a_i,\quad\text{for }1\leq i\leq n+1,\quad
        \tilde{a}_{n+i}=a_{n+1},\quad\text{for }1\leq i\leq m,\\
        \tilde{a}_{n+m+1}&=s-\sum_{k=1}^na_k-ma_{n+1},\quad
        \tilde{a}_{n+m+1+k}=0,\quad\text{for }k\in\N,
    \end{align*}
    where $m$ is chosen such that $0\leq\tilde{a}_{n+m+1}\leq a_{n+1}$.
    For $l=l(n)\in\N$
    \begin{align*}
        \tilde{b}_i&=b_i,\quad\text{for }1\leq i\leq l+1,\quad
        \tilde{b}_{l+i}=b_{l+1},\quad\text{for }1\leq i\leq p,\\
        \tilde{b}_{l+p+1}&=s-\sum_{k=1}^lb_k-pb_{l+1},\quad
        \tilde{b}_{l+p+1+k}=0,\quad\text{for }k\in\N,
    \end{align*}
    where $l(n)$ is large enough such that $b_{l+1}\leq a_{n+1}$ and $p$ is
    chosen analogously as above such that
    $0\leq\tilde{b}_{l+p+1}\leq b_{l+1}$.

    Both $\tilde{\seq{a}}^n$ and $\tilde{\seq{b}}^n$ are non-increasing
    sequences with finitely many non-zero terms and, by construction,
    $\tilde{\seq{a}}^n$ majorizes $\tilde{\seq{b}}^n$ such that we can conclude
    $\s[\alpha]{\tilde{\seq{a}}^n}\leq\s[\alpha]{\tilde{\seq{b}}^n}$.
    It remains to
    show that
    \begin{align*}
        \lim_{n\rightarrow\infty}\s[\alpha]{\tilde{\seq{a}}^n}
        =\s[\alpha]{\seq{a}}\quad\text{and}\quad
        \lim_{n\rightarrow\infty}\s[\alpha]{\tilde{\seq{b}}^n}
        =\s[\alpha]{\seq{b}}.
    \end{align*}

    Since the $\log_2$-function is continuous, it suffices to show that the
    argument converges. We thus estimate
    \begin{align*}
        \left|\sum_{k=1}^\infty a_k^\alpha-\sum_{k=1}^\infty\tilde{a}_k^\alpha
        \right|\leq\sum_{k=n+2}^{n+m}|a_k^\alpha-a^\alpha_{n+1}|+
        |a^\alpha_{n+m+1}-\tilde{a}^\alpha_{n+m+1}|+\sum_{k=n+m+2}^\infty
        a_k^\alpha.
    \end{align*}
    The second and third term obviously converge for $n\rightarrow\infty$.
    For the first term we consider $\alpha<1$ (otherwise the statement
    is straightforward)
    \begin{align*}
        \sum_{k=n+2}^{n+m}|a_k^\alpha-a^\alpha_{n+1}|\leq
        \sum_{k=n+2}^\infty a_k^\alpha+ma_{n+1}^\alpha.
    \end{align*}
    The first term converges while for the second term we obtain
    \begin{align*}
        ma_{n+1}^\alpha&=ma_{n+1}a_{n+1}^{\alpha-1}\leq
        \left(\sum_{k=n+1}^\infty a_k\right)a_{n+1}^{\alpha-1}
        \leq\sum_{k=n+1}^\infty a_ka_{n+1}^{\alpha-1}
        \overset{\alpha<1}{\leq}\sum_{k=n+1}^\infty a_ka_k^{\alpha-1}\\
        &\leq\sum_{k=n+1}^\infty a_k^{\alpha}\overset{n}
        {\longrightarrow} 0.
    \end{align*}
    Similarly for $\tilde{\seq{b}}^n$ and thus the statement follows.
\end{proof}

\begin{proof}[Proof of Proposition \ref{prop:alaw_app}]
    We follow the arguments from \cite[Lemma 2]{VCMPS} and
    \cite{EntSim} with some adjustments.
    The idea is that we want to bound $\s[\alpha]{\rho_{1,j}}$ by
    exploiting the fact that R\'{e}nyi entropies are Schur concave. I.e.,
    construct a sequence that majorizes or is majorized by
    the eigenvalues of $\rho_{1,j}$ and
    compute the entropy for the former explicitly.

    \textbf{Case $0<\alpha<1$}.
    Let $p:=\varepsilon_j^2(r)$. Then, $\sum_{k=1}^r(\sigma_k^j)^2=1-p$.
    For $h>0$, consider the candidate sequence
    \begin{align*}
        \lambda_1&=(1-p)-(r-1)h,\quad\lambda_i=h\quad\text{for }2\leq i\leq n-1,\\
        \lambda_n&=p-h(n-r-3)=:\theta,\quad
        \lambda_k=0\quad\text{for } k>n,
    \end{align*}
    where $n\geq r+1$ is chosen such that $0\leq\theta\leq h$.
    For $h=0$, we simply set $\lambda_1=1$ and $\lambda_k=0$ for $k>1$.
    Our goal is to show
    that this sequence majorizes $\set{(\sigma_k^j)^2:\,k\in\N}$.

    If $h=0$, then $\set{\lambda_k:\,k\in\N}$ is
    non-increasing,
    \begin{align*}
        1=\sum_{k=1}^\infty\lambda_k=
        \sum_{k=1}^\infty(\sigma^j_k)^2,
    \end{align*}
    and
    \begin{align*}
        1=\sum_{k=1}^m\lambda_k\geq\sum_{k=1}^m(\sigma^j_k)^2,
    \end{align*}
    for any
    $m\in\N$. I.e., $\set{(\sigma_k^j)^2:\,k\in\N}$ is trivially
    majorized by $\set{\lambda_k:\,k\in\N}$.

    Thus, consider $h>0$. In order to ensure the sequence
    is non-increasing, we require $\lambda_1\geq h$ and thus
    $h\leq\frac{1-p}{r}$.

    Next,
    \begin{align*}
        \lambda_1=(1-p)-(r-1)h\geq (1-p)-\frac{r-1}{r}(1-p)=(1-p)\frac{1}{r}>0.
    \end{align*}
    For $1\leq k\leq n-1$
    \begin{align*}
        \lambda_1-\lambda_k=\lambda_1-h=(1-p)-rh\geq (1-p)-\frac{r}{r}(1-p)=0.
    \end{align*}
    And finally by the choice of $n$, $0\leq\theta\leq h$. Thus,
    $\set{\lambda_k:\,k\in\N}$ is non-increasing and by construction sums to 1.

    For $1\leq m\leq r$,
    \begin{align*}
        &\sum_{k=1}^m\lambda_k+(r-m)h=1-p=\sum_{k=1}^r(\sigma_k^j)^2,\\
        &\Leftrightarrow\sum_{k=1}^m\lambda_k-(\sigma_k^j)^2=
        \sum_{k=m+1}^r(\sigma_k^j)^2-(r-m)h\geq(r-m)((\sigma_r^j)^2-h)=0,
    \end{align*}
    with equality for $m=r$.
    For $r<m<n$, we have
    \begin{align*}
        \sum_{k=1}^m\lambda_k-(\sigma_k^j)^2\geq (1-p)+(m-r)h-
        (1-p)-(m-r)(\sigma_k^j)^2=0,
    \end{align*}
    and for $m\geq n$,
    \begin{align*}
        \sum_{k=1}^m\lambda_k-(\sigma_k^j)^2&=\sum_{k=1}^r\lambda_k
        -(\sigma_k^j)^2
        +\sum_{k=r+1}^m\lambda_k-\sum_{k=r+1}^m(\sigma_k^j)^2\\
        &\geq\sum_{k=r+1}^n\lambda_k-\sum_{k=r+1}^\infty(\sigma_k^j)^2=0.
    \end{align*}
    Thus, $\set{\lambda_k:\,k\in\N}$ majorizes $\set{(\sigma^j_k)^2:\,k\in\N}$
    and
    by Lemma \ref{lemma:schur}, we get
    \[\s[\alpha]{\set{(\sigma_k^j)^2:\,k\in\N}}
    \geq\s[\alpha]{\set{\lambda_k:\,k\in\N}}.\]
    The entropy of the majorizing sequence can be computed as
    \begin{align*}
        \sum_{k=1}^\infty\lambda_k^\alpha=[(1-p)-(r-1)h]^\alpha+
        [(r-1)+\lfloor\frac{p}{h}\rfloor]h^\alpha+
        [p-h(n-r-3)]^\alpha.
    \end{align*}

    Estimating from below
    \begin{align*}
        \sum_{k=1}^\infty\lambda_k^\alpha&\geq h^\alpha+(r-1)h^\alpha+
        (p/h-1)h^\alpha+[p-h(n-r-3)]^\alpha\\
        &\geq (r-1)h^\alpha+ph^{\alpha-1}.
    \end{align*}
    We further estimate from below by minimizing the expression on the right.
    I.e., for
    \begin{align*}
        f(h)&=(r-1)h^\alpha+ph^{\alpha-1},\quad
        f'(h)=\alpha(r-1)h^{\alpha-1}+(\alpha-1)ph^{\alpha-2}=0,\\
        &\Leftrightarrow\quad h=\frac{(1-\alpha)p}{\alpha(r-1)}=:h^*.
    \end{align*}
    Moreover,
    \[f''(h^*)=(1-\alpha)^{\alpha-1}p^\alpha(r-1)^{1-\alpha}\alpha^{-\alpha}>0,\]
    for $0<\alpha<1$. Plugging in $\s[\alpha]{\rho_{1,j}}\geq
    \frac{1}{1-\alpha}\log_2(f(h^*))$, gives \eqref{eq:entL}.

    \textbf{Case $\alpha>1$}. The maximizing distribution for this case is
    straightforward. Pick
    \begin{align*}
        \lambda_k&=\frac{1-p}{r},\quad 1\leq k\leq r,\quad
        \lambda_k=(\sigma_k^j)^2,\quad k>r.
    \end{align*}
    Clearly, $\sum_{k=1}^m\lambda_k\leq\sum_{k=1}^m(\sigma_k^j)^2$ for any
    $m\in\N$ with equality for $m\geq r$. Thus,
    \begin{align*}
        \s[\alpha]{\rho_{1,j}}&\leq\frac{1}{1-\alpha}\log_2
        \left(r\frac{(1-p)^\alpha}{r^\alpha}+\sum_{k=r+1}^\infty
        (\sigma_k^j)^2\right)\leq
        \frac{1}{1-\alpha}\log_2
        \left(\frac{(1-p)^\alpha}{r^{\alpha-1}}\right)\\
        &=
        \frac{\alpha}{1-\alpha}\log_2(1-p)+\log_2(r),
    \end{align*}
    which shows \eqref{eq:entU} and completes the proof.
\end{proof}

Proposition \ref{prop:alaw_app} immediately gives.
\begin{corollary}
    For $\uu\in\X$,
    if the R\'{e}nyi entropy $\s[\alpha]{\rho_{1,j}}$,
    $0<\alpha<1$, scales at most as some power of $\log_2(j)$, then
    $u$ is
    approximable within the TT format, i.e., TT ranks grow at most
    polynomially with the dimension $d$.
    On the other hand, if the R\'{e}nyi entropy $\s[\alpha]{{\rho}_{1,j}}$,
    $\alpha>1$ scales as some power of $j$, then $u$ is
    not approximable within the TT format, i.e., TT ranks grow exponentially
    with $d$.
\end{corollary}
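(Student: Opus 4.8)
The plan is to read both claims directly off the two inequalities of Proposition \ref{prop:alaw_app}, solving each for the rank $r$ and then substituting the hypothesized entropy scaling. In both directions the estimate couples the rank to the entropy through a factor $2^{S^\alpha(\rho_{1,j})}$: an entropy that is polylogarithmic in $j$ produces a polynomial rank, whereas an entropy that is a genuine power of $j$ forces an exponential rank.

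For the approximability direction ($0<\alpha<1$) I would start from \eqref{eq:entL} and isolate $\varepsilon_j^2(r)$. Since $\alpha/(1-\alpha)>0$, exponentiating gives
\begin{align*}
    \varepsilon_j^2(r)\leq\alpha\,2^{\frac{1-\alpha}{\alpha}S^\alpha(\rho_{1,j})}
    \left(\frac{r-1}{1-\alpha}\right)^{-\frac{1-\alpha}{\alpha}},
\end{align*}
so the $j$-th truncation error decays like a fixed power of $r$ with a prefactor governed by the entropy. Demanding $\varepsilon_j(r)\leq\delta$ and solving for $r$ yields a sufficient rank
\begin{align*}
    r_j\lesssim\left(\frac{\alpha}{\delta^2}\right)^{\frac{\alpha}{1-\alpha}}2^{S^\alpha(\rho_{1,j})}.
\end{align*}
Inserting $S^\alpha(\rho_{1,j})\leq C(\log_2 j)^q\leq C(\log_2 d)^q$ bounds $2^{S^\alpha(\rho_{1,j})}$ by $2^{C(\log_2 d)^q}$, which equals $d^C$ when $q=1$. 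To pass from these per-cut bounds to a single fixed-accuracy $\varepsilon$ representation, I would invoke the standard TT/hierarchical-SVD quasi-optimality estimate, by which the global error is at most $(\sum_{j=1}^{d-1}\varepsilon_j^2(r_j))^{1/2}$ (see \cite{HB}), and distribute the budget as $\delta=\varepsilon/\sqrt{d-1}$; this multiplies each rank only by the polynomial factor $(d-1)^{\alpha/(1-\alpha)}$, so polynomial growth is preserved.

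For the non-approximability direction ($\alpha>1$) I would instead rearrange \eqref{eq:entU} into
\begin{align*}
    \log_2 r\geq S^\alpha(\rho_{1,j})+\frac{\alpha}{\alpha-1}\log_2\!\left(1-\varepsilon_j^2(r)\right).
\end{align*}
Fixing a target accuracy $\varepsilon<1$ at this cut forces $1-\varepsilon_j^2(r)\geq1-\varepsilon^2$, so the correction term is bounded below by the constant $\frac{\alpha}{\alpha-1}\log_2(1-\varepsilon^2)$ and we obtain $r_j\geq(1-\varepsilon^2)^{\frac{\alpha}{\alpha-1}}2^{S^\alpha(\rho_{1,j})}$. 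If $S^\alpha(\rho_{1,j})\geq c\,j^s$ for some $s>0$, then evaluating at a cut with $j$ proportional to $d$ (say $j=\lfloor d/2\rfloor$) gives $r_j\gtrsim2^{c(d/2)^s}$, which grows exponentially in $d$; hence no fixed-accuracy TT representation of polynomial rank can exist.

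The inversions themselves are elementary, so the only genuine points of care are two bookkeeping steps. The first is the aggregation of per-cut truncation errors into the global TT error, which rests on the quasi-optimality of successive SVD truncations and is where the extra $\sqrt{d}$ factor enters. The second is the translation ``power of $\log_2 j$'' $\rightarrow$ ``polynomial rank'': it is exact for the first power ($q=1$, giving $r_j\sim d^C$), while higher powers $q>1$ only yield the quasi-polynomial bound $2^{C(\log_2 d)^q}$, so I would either restrict the statement to $q=1$ or read ``approximable'' in the correspondingly relaxed sense. I expect this second point, rather than any hard estimate, to be the main thing to pin down.
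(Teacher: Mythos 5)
Your proof is correct and takes essentially the same route the paper intends: the corollary is stated as an immediate consequence of Proposition \ref{prop:alaw_app}, and your inversion of \eqref{eq:entL} and \eqref{eq:entU}, together with the standard SVD quasi-optimality aggregation across cuts (the $\delta=\varepsilon/\sqrt{d-1}$ budget), supplies exactly the details the paper leaves implicit. Your closing caveat is also well taken: for entropy scaling like $(\log_2 j)^q$ with $q>1$ the resulting rank bound $2^{C(\log_2 d)^q}$ is only quasi-polynomial, so the ``polynomial rank'' conclusion is literally exact only for $q=1$ --- a looseness in the paper's statement rather than a gap in your argument.
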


Next, we want to show that a lower bound on the von Neumann entropy implies
a lower bound on rank growth. To this end, we require the ability to
approximate the entropy of a given state by entropies of states with finite
ranks. I.e., we require continuity of the von Neumann entropy. Unfortunately,
the von Neumann entropy is continuous only on a small subset of
$\Tr{\X}$
that is nowhere dense. In fact, the set of states with infinite von Neumann
entropy is trace-norm dense in $\Tr{\X}$.

However, under certain ``physical''
assumptions, one can show continuity of $\s{\cdot}$.
This is will be discussed in
greater detail in Section \ref{sec:entconv}. For now we assume continuity
as given. Then, we can show

\begin{proposition}
    \index{entropy!von Neumann}
    For $r\in\N$, let $\varepsilon_j^2(r)$ denote the best approximation error
    for an $r$-term truncation in the bipartite cut
    $\X\cong\X_{1,j}
    \otimes\X_{j+1,d}$, and
    $\rho_{1,j}^r$ the corresponding best approximation.
    If $\s{\rho_{1,j}}\geq c\min\set{j,\,d}$ and
    $g(r):=|\s{\rho_{1,j}^r}-\s{\rho_{1,j}}|$, then
    $r\geq\exp{c\min\set{j,\,d/2}-g(r)}$.
\end{proposition}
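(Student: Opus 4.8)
The plan is to route everything through the elementary maximal-entropy estimate: a density operator supported on at most $r$ atoms has von Neumann entropy no larger than that of the uniform distribution on those atoms, i.e.\ $\log r$. Since $\rho_{1,j}^r$ is by construction carried by at most $r$ Schmidt vectors, this supplies the single quantitative ingredient; the remainder is bookkeeping with the defined quantity $g(r)$ and the hypothesis. So first I would record the rank bound
\begin{align*}
    \s{\rho_{1,j}^r}\leq\log r,
\end{align*}
and then combine it with the definition $g(r)=|\s{\rho_{1,j}^r}-\s{\rho_{1,j}}|$ via the triangle inequality to obtain
\begin{align*}
    \s{\rho_{1,j}}\leq\s{\rho_{1,j}^r}+g(r)\leq\log r+g(r).
\end{align*}

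Feeding in the volume-law hypothesis $\s{\rho_{1,j}}\geq c\min\set{j,\,d}$ gives $c\min\set{j,\,d}\leq\log r+g(r)$, hence $\log r\geq c\min\set{j,\,d}-g(r)$, and exponentiating yields $r\geq\exp{c\min\set{j,\,d}-g(r)}$. The stated conclusion then follows a fortiori, since $\min\set{j,\,d}\geq\min\set{j,\,d/2}$ forces $c\min\set{j,\,d}-g(r)\geq c\min\set{j,\,d/2}-g(r)$; in other words the argument in fact delivers the slightly stronger bound with $\min\set{j,\,d}$, and the $d/2$ version is merely a weakening.

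The one step deserving care, and the main (though minor) obstacle, is the maximal-entropy inequality itself. If $\rho_{1,j}^r$ is the sub-normalized truncation retaining the $r$ largest eigenvalues $(\sigma_k^j)^2$ with mass $p=1-\varepsilon_j^2(r)\leq 1$, the bound must be read after renormalization: the normalized truncation is a genuine rank-$r$ state of entropy at most $\log r$, and passing back to $\rho_{1,j}^r$ costs only the bounded additive term $-p\log p\leq 1/e$, which is absorbed into the constant (and vanishes if $\rho_{1,j}^r$ is already normalized, i.e.\ $p=1$). A secondary bookkeeping point is the base of the logarithm: the maximal-entropy bound pairs cleanly with the natural logarithm precisely when the conclusion is written with $\exp{\cdot}$, so if $\s{\cdot}$ is taken in $\log_2$ as in the area-law definition one picks up a harmless factor $\log 2$ that can again be folded into $c$. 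Finally, I would emphasize that the real substance lies not in this proposition but in the continuity discussion preceding it: the estimate is only informative once $g(r)$ is known to be small — ideally $o(\min\set{j,\,d})$ — which is exactly what the continuity of $\s{\cdot}$ under the forthcoming physical assumptions is meant to guarantee.
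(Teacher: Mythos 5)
Your proof is correct and follows essentially the same route as the paper's: bound $S(\rho_{1,j}^r)$ by $\log_2(r)$ via the maximal-entropy (Schur concavity / uniform distribution) argument, combine it with the triangle inequality through $g(r)$, and insert the volume-law hypothesis. Your extra bookkeeping remarks — renormalizing the sub-normalized truncation, the logarithm base absorbed into the constant, and the a fortiori weakening from $\min\{j,\,d\}$ to $\min\{j,\,d/2\}$ — are points the paper leaves implicit, and they only strengthen the argument.
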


\begin{proof}
    First, note that we can bound the entropy of a state of rank $r$ by
    $\log_2(r)$. The von Neumann Entropy is Schur concave as any entropy measure
    should be\footnote{A majorizing sequence of probabilities represents
    less uncertainty and
    thus should have smaller entropy in any meaningful measure.},
    since it quantifies chaos or lack of information. Thus,
    the maximum entropy is attained for a uniform distribution
    \begin{align*}
        \s{\rho_{1,j}^r}\leq-
        \sum_{k=1}^r\frac{1}{r}\log_2\left(\frac{1}{r}\right)
        =\log_2(r).
    \end{align*}
    See \cite[Example 2.28]{QI} for more details.

    Thus, following arguments from \cite{EntSim}
    \begin{align*}
        \log_2(r)\geq\s{\rho_{1,j}^r}\geq\s{\rho_{1,j}}-
        |\s{\rho_{1,j}}-\s{\rho_{1,j}^r}|,
    \end{align*}
    which completes the proof.
\end{proof}
Of course, the above estimate is useful only
if $\lim_{r\rightarrow\infty}g(r)=0$ which also
assumes that $\s[\alpha]{\rho_{1,j}}<\infty$.
For the undetermined region of Table \ref{tab:summ}, we refer to the examples
provided in \cite{EntSim}. These carry over to the infinite dimensional case.
We repeat one such example as a demonstration.
E.g., a state that can be efficiently approximated with TT can have linearly
growing R\'{e}nyi entropy. To see this, assume w.l.o.g.\footnote{This
assumption merely simplifies notation.}
$\X_j=\X_i=:\X^{1d}$ for all $1\leq i,\,j\leq d$ and
let $\varphi_1$, $\varphi_2$ and $\varphi_3$ be
orthonormal in $\X^{1d}$. Then, for $0\leq p_d\leq 1$, set
\begin{align*}
    \psi_{2d}:=\sqrt{1-p_d}(
    \underbrace{\varphi_3\otimes\ldots\otimes\varphi_3}_{2d\text{ times}})
    +\sqrt{\frac{p_d}{2^d}}\sum_{i_1,\ldots,i_d\in\set{1,\,2}}
    (\varphi_{i_1}\otimes\ldots\otimes\varphi_{i_d})\otimes(\varphi_{i_1}\otimes\ldots\otimes\varphi_{i_d}).
\end{align*}
Clearly $\norm{\psi_{2d}}{\X}=1$. We can set $p_d:=1/d$ which, by
construction, implies $\psi_{2d}$ converges to a rank-one state with
growing $d$.
I.e., by a simple rank-one approximation we obtain
\begin{align*}
    \norm{\psi_{2d}-\sqrt{1-p_d}(
    \underbrace{\varphi_3\otimes\ldots\otimes\varphi_3}_{2d\text{ times}})}{\X}
    =\sqrt{p_d}\overset{d\rightarrow\infty}{\longrightarrow}0.
\end{align*}
On the other hand, for $1\leq j\leq d$, the density operator of the subsystem
can be computed as
\begin{align*}
    \rho_{1,j}&=(1-p_d)\rho^{j}+\frac{p_d}{2^j}\sum_{i_1,\ldots,i_j\in\set{1,\,2}}
    \rho^{i_1,\ldots,i_j},\\
    \rho^j&:=
    \inp{\cdot}{\underbrace{\varphi_3\otimes\ldots\otimes\varphi_3}_{j\text{ times}}}_{\X_{1,j}}
    \underbrace{\varphi_3\otimes\ldots\otimes\varphi_3}_{j\text{ times}},\\
    \rho^{i_1,\ldots,i_j}&:=
    \inp{\cdot}{\varphi_{i_1}\otimes\ldots\otimes\varphi_{i_j}}_{\X_{1,j}}
    \varphi_{i_1}\otimes\ldots\otimes\varphi_{i_j}
\end{align*}
and therefore the R\'{e}nyi entropy for $0<\alpha<1$ is
\begin{align*}
    \s[\alpha]{\rho_{1,j}}=\frac{1}{1-\alpha}\log_2([1-p_d]^\alpha+
    2^{(1-\alpha)j}p_d^\alpha)\geq j-\frac{\alpha}{1-\alpha}\log_2(d)
    =j+\frac{2\alpha}{1-\alpha}\log_2(\varepsilon),
\end{align*}
for $\varepsilon:=\sqrt{p_d}=\sqrt{1/d}$.
For examples of the other parts of the undetermined
region of Table \ref{tab:summ}
we refer to \cite{EntSim}.

We conclude this section by a brief discussion of entropy discontinuity.
For $\uu\in\X$ and
$\rho:=\inp{\cdot}{\uu}_{\X}\uu\in\State{\X}$,
$\s[\alpha]{\rho_{1,j}}$ for
some $1\leq j\leq d$ is not necessarily finite for $0<\alpha\leq 1$.
In such cases entropy is no longer a useful measure of
approximability. We are thus not
certain if or to what extent questions like ``Does infinite entropy imply
inapproximability?'' or ``Does approximability imply finite entropy?'' make
sense. For $\alpha>1$ the R\'{e}nyi entropy is always finite. In this case
we can certify that a faster than logarithmic scaling implies inapproximability.


\subsection{Entropy Convergence}\label{sec:entconv}
In this section, we consider the question of finite entropy or entropy
continuity. This is important not only for considerations in Section
\ref{sec:entapp}, but for Section \ref{sec:alaw} as well. Since one of
our
results is an upper bound for the von Neumann entropy in the infinite
dimensional setting, we first have to consider for what states entropy
makes sense in the first place.

The set of states with infinite entropy is dense in
$\Tr{\X}$
(see \cite[section II.D]{Wehrl}). From a purely analytic standpoint, entropy
is finite if the singular values of bipartite cuts converge fast enough.
Put more precisely, given an algebraic decay of the singular values, we
obtain

\begin{proposition}\label{prop:convrate}
    For any $1\leq j\leq d$, if $\sigma_k^j\lesssim k^{-s}$ for
    $s>\frac{1}{2\alpha}$, $0<\alpha\leq 1$,
    then $\s[\alpha]{\rho_{1,j}}<\infty$.
\end{proposition}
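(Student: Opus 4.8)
The plan is to reduce the finiteness of $\s[\alpha]{\rho_{1,j}}$ to the convergence of an explicit series in the singular values $\sigma_k^j$, and then to compare against a $p$-series. The first step is to recall from the Schmidt decomposition that the eigenvalues of $\rho_{1,j}$ are precisely $\set{(\sigma_k^j)^2}$, so that $\tr{\rho_{1,j}^\alpha}=\sum_{k=1}^\infty(\sigma_k^j)^{2\alpha}$ and, for $0<\alpha<1$, $\s[\alpha]{\rho_{1,j}}=\frac{1}{1-\alpha}\log_2(\tr{\rho_{1,j}^\alpha})$.

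For $0<\alpha<1$ I would argue directly: since $\rho_{1,j}\in\State{\X_{1,j}}$ the sum is bounded below by $(\sigma_1^j)^{2\alpha}>0$, so it suffices to show it is finite. The bound $\sigma_k^j\lesssim k^{-s}$ gives $(\sigma_k^j)^{2\alpha}\lesssim k^{-2\alpha s}$, and the hypothesis $s>\frac{1}{2\alpha}$ is exactly $2\alpha s>1$, so the comparison series converges and $\log_2(\tr{\rho_{1,j}^\alpha})$ is finite.

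The endpoint $\alpha=1$ is where the argument needs care, and I expect it to be the main (though still mild) obstacle, because the von Neumann entropy $\s{\rho_{1,j}}=-\sum_k(\sigma_k^j)^2\log_2((\sigma_k^j)^2)$ carries an extra logarithmic weight. Here the hypothesis becomes $s>\frac12$. My plan is to exploit the monotonicity of $x\mapsto -x\log_2 x$ on $(0,1/e)$: since $(\sigma_k^j)^2\to 0$, for all large $k$ both $(\sigma_k^j)^2$ and its majorant $Ck^{-2s}$ lie in this interval, so each summand is dominated by $-Ck^{-2s}\log_2(Ck^{-2s})\lesssim k^{-2s}\log_2 k$.

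It then remains to show $\sum_k k^{-2s}\log_2 k<\infty$, which I would do by absorbing the logarithm into the power: choosing $\varepsilon>0$ with $2s-\varepsilon>1$ and using $\log_2 k\lesssim k^{\varepsilon}$ reduces the tail to a convergent $p$-series, while the finitely many initial terms are harmless. Combining the two regimes yields $\s[\alpha]{\rho_{1,j}}<\infty$ for all $0<\alpha\leq 1$.
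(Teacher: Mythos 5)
Your proof is correct, and it splits into two parts that compare differently with the paper's. For $0<\alpha<1$ your argument is the same as the paper's: identify the eigenvalues of $\rho_{1,j}$ with $(\sigma_k^j)^2$, so that $\tr{\rho_{1,j}^\alpha}=\sum_{k}(\sigma_k^j)^{2\alpha}\lesssim\sum_k k^{-2\alpha s}$, which converges precisely because $2\alpha s>1$. The genuine difference is at the endpoint $\alpha=1$: the paper does not estimate the von Neumann entropy series directly, but invokes a cited criterion (Equation (45) of the Baccetti reference) stating that $\s{\rho_{1,j}}<\infty$ whenever $\sum_k(\sigma_k^j)^2k^\delta<\infty$ for some $\delta>0$, and then chooses $0<\delta<2s-1$. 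You instead prove the endpoint from scratch: the monotonicity of $x\mapsto-x\log_2 x$ on $(0,1/e)$ justifies the term-by-term domination $-(\sigma_k^j)^2\log_2\bigl((\sigma_k^j)^2\bigr)\leq -Ck^{-2s}\log_2\bigl(Ck^{-2s}\bigr)\lesssim k^{-2s}\log_2 k$ for large $k$, and you then absorb the logarithm into a power $k^{\varepsilon}$ with $2s-\varepsilon>1$. Both arguments exploit exactly the same slack --- the strict inequality $2s>1$ leaves room for an arbitrarily small extra power of $k$ --- so they are close in spirit, but yours is self-contained (and the monotonicity step is a necessary piece of care, since $-x\log_2 x$ is not globally monotone, so the pointwise bound on $\sigma_k^j$ does not immediately transfer to the entropy summands), whereas the paper trades that work for brevity by outsourcing it to the literature; your weighted-series estimate $\sum_k k^{-2s}\log_2 k\lesssim\sum_k k^{-2s+\varepsilon}<\infty$ is in effect a proof of the special case of the cited criterion that the paper actually uses.
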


\begin{proof}
    For the von Neumann entropy, by \cite[Equation (45)]{Baccetti}, it is
    sufficient to show that there exists some $\delta>0$ such that
    $\sum_{k=1}^\infty(\sigma_k^j)^2k^\delta<\infty$. I.e., by assumption,
    \begin{align*}
        \sum_{k=1}^\infty(\sigma_k^j)^2k^\delta\lesssim
        \sum_{k=1}^\infty k^{-2s+\delta}<\infty,
    \end{align*}
    for some
    $0<\delta<2s-1$.
    For $0<\alpha<1$, we obtain
    \begin{align*}
        \s[\alpha]{\rho_{1,j}}\lesssim\sum_{k=1}^\infty k^{-2s\alpha}<\infty,
    \end{align*}
    since $-2s\alpha<-1$.
\end{proof}

By virtue of the fact that $\rho\in\Tr{\X}$,
$\s[\alpha]{\rho_{1,j}}<\infty$
for any $\alpha>1$. The requirement in Proposition \ref{prop:convrate} is
more useful as a necessary condition: if entropy fails to be finite,
it tells us how ``slow'' the decay rate must be. However, since we are
ultimately interested in the relation between entropy scaling and
approximation, it is not useful as a criteria to decide a priori the
approximability of a system.

To this end, we discuss a set of conditions frequently assumed in the physics
literature (see \cite{Wehrl, PlenioInfDim}). There are essentially two
difficulties that appear in the setting of infinite dimensional Hilbert spaces,
not encountered in finite dimensional systems.

Firstly, it is possible for the expected energy described by the system
Hamiltonian to be infinite in a given state. This is due to the fact that in
the infinite dimensional setting Hamiltonians are generally unbounded, just
like any differential operator. It can be shown that if the entropy of a given
state is infinite, then so must be the expected energy of that state
(see \cite[section II.D]{Wehrl}).

Secondly, in the finite dimensional case it is straight forward to determine
the state of maximal entropy (maximal chaos). It is simply the state with
a uniform distribution of probabilities, i.e., the
density matrix is the identity operator times a normalization constant
$1/d$.

This does not work anymore in infinite dimensions, since
such a state is no longer normalizable. Instead, the state of maximal entropy
is given by the \emph{Gibbs} state, a well-known equilibrium distribution from
statistical mechanics. This state is not unique and does not need to exist,
but rather depends on the inverse temperature (parameter $\beta$ from
Proposition \ref{prop:finentropy}) which leads to different
expected energies.

Assuming the existence of the Gibbs state is equivalent to assuming the
system Hamiltonian has purely discrete spectrum and the eigenvalues
diverge ``fast enough'' (see \cite[Thms XIII.16, XIII.67]{MathPhys4}).
Physically, it means that we assume the existence of
a thermodynamic limit at any temperature. However, even in a
physically meaningful setting,
the Gibbs state does not have to exist: neither on physical grounds nor on
mathematical. We will return to this issue in Section \ref{sec:summ}.
For now we formulate a model setting in which the von Neumann entropy is
finite.

\begin{proposition}\label{prop:finentropy}
    \index{entropy!von Neumann}
    Suppose we are given a self-adjoint PDE operator
    $H:\dom{H}\rightarrow\X$ such that
    \begin{align*}
        H=H_{1,j}\otimes\id_{j+1,d}+R_j,\quad R_j\geq 0,
    \end{align*}
    for any $1\leq j\leq d$, where
    $H_{1,j}:\dom{H_{1,j}}\rightarrow\X_{1,j}$
    is self-adjoint and $\id_{j+1,d}:\X_{j+1,d}\rightarrow
    \X_{j+1,d}$ is the identity operator.
    For $\rho\in\State{\X}$, assume that
    $\tr{\rho H}=:E<\infty$ and
    $\exp{-\beta H}\in\Tr{\X}$ for all $\beta>0$.
    Then, $\s{\rho_{1,j}}<\infty$ for any $1\leq j\leq d$.
\end{proposition}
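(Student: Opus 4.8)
The plan is to reduce finiteness of the von Neumann entropy $\s{\rho_{1,j}}$ to finiteness of the expected energy $E=\tr{\rho H}$ via a Gibbs-variational argument, exploiting the assumption $\exp{-\beta H}\in\Tr{\X}$. The key observation is that the tensor-product structure $H=H_{1,j}\otimes\id_{j+1,d}+R_j$ with $R_j\geq0$ forces $H\geq H_{1,j}\otimes\id_{j+1,d}$, so the reduced state $\rho_{1,j}$ inherits a finite expected energy with respect to the local operator $H_{1,j}$. Precisely, I would first show
\begin{align*}
    \tr{\rho_{1,j}H_{1,j}}=\tr{\rho\,(H_{1,j}\otimes\id_{j+1,d})}
    \leq\tr{\rho H}=E<\infty,
\end{align*}
where the first equality is the defining property of the partial trace (taking $E=H_{1,j}$ there, modulo domain technicalities) and the inequality uses $R_j\geq0$ together with $\tr{\rho R_j}\geq0$ for $\rho\in\State{\X}$.

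Next I would set up the maximum-entropy (Gibbs) comparison on the subsystem $\X_{1,j}$. The assumption $\exp{-\beta H}\in\Tr{\X}$, combined again with the tensor-product splitting and positivity of $R_j$, should yield $\exp{-\beta H_{1,j}}\in\Tr{\X_{1,j}}$ for every $\beta>0$; indeed $H\geq H_{1,j}\otimes\id_{j+1,d}$ gives $\exp{-\beta H}\leq\exp{-\beta(H_{1,j}\otimes\id_{j+1,d})}=\exp{-\beta H_{1,j}}\otimes\id$, and trace-class membership on the left forces the local factor to be trace class. This guarantees that the local Gibbs state $\rho_\beta^{\mathrm{loc}}:=Z_\beta^{-1}\exp{-\beta H_{1,j}}$ with $Z_\beta:=\tr{\exp{-\beta H_{1,j}}}$ is a well-defined element of $\State{\X_{1,j}}$, so $H_{1,j}$ has purely discrete spectrum with eigenvalues diverging fast enough (as discussed before the statement). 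The crux is then the classical Gibbs inequality: for any $\sigma\in\State{\X_{1,j}}$,
\begin{align*}
    \s{\sigma}\leq\beta\tr{\sigma H_{1,j}}+\log_2(Z_\beta),
\end{align*}
which is nothing but the nonnegativity of the relative entropy $\tr{\sigma(\log_2\sigma-\log_2\rho_\beta^{\mathrm{loc}})}\geq0$ rewritten.

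Applying this with $\sigma=\rho_{1,j}$ and using the energy bound from the first step, I would conclude
\begin{align*}
    \s{\rho_{1,j}}\leq\beta\,\tr{\rho_{1,j}H_{1,j}}+\log_2(Z_\beta)
    \leq\beta E+\log_2(Z_\beta)<\infty,
\end{align*}
since both $E$ and $Z_\beta$ are finite for any fixed $\beta>0$. This proves the proposition.

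The main obstacle I anticipate is analytic rather than conceptual: $H_{1,j}$ is unbounded, so each of the displayed manipulations requires care about operator domains and the meaning of traces of unbounded operators against trace-class states. In particular, justifying $\tr{\rho_{1,j}H_{1,j}}=\tr{\rho(H_{1,j}\otimes\id)}$ demands that $\rho^{1/2}$ map into $\dom{H_{1,j}^{1/2}}$ (so that the energy is interpreted via the closed quadratic form of $H_{1,j}$, which may require $H_{1,j}$ to be bounded below — a harmless normalization), and the relative-entropy inequality must be invoked in a form valid for unbounded $H_{1,j}$ and possibly infinite-rank $\sigma$. The deduction $\exp{-\beta H_{1,j}}\in\Tr{\X_{1,j}}$ from $\exp{-\beta H}\in\Tr{\X}$ also needs the operator monotonicity $\exp{-\beta H}\leq\exp{-\beta(H_{1,j}\otimes\id)}$, which follows from $H\geq H_{1,j}\otimes\id$ but formally requires that the exponential is operator-monotone decreasing and that both sides are compared on a common form domain. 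I would handle these points by passing through the spectral calculus and the quadratic-form definitions rather than manipulating the unbounded operators directly, and by citing the standard functional-calculus facts (as in the references to \cite{MathPhys4} already invoked in the surrounding text).
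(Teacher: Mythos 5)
Your plan follows the paper's proof in its two essential moves: your first display (the energy reduction $\tr{\rho_{1,j}H_{1,j}}\leq\tr{\rho H}=E$ using $R_j\geq 0$) is exactly Step I of the paper's argument, and the closing maximum-entropy comparison against a local Gibbs state is the paper's Step V. Your use of the Gibbs variational inequality $\s{\rho_{1,j}}\leq\beta\tr{\rho_{1,j}H_{1,j}}+\log_2(Z_\beta)$ for an \emph{arbitrary} fixed $\beta$ would even be a genuine simplification: the paper instead proves continuity of the Gibbs energy $E(\beta)$, computes its limits as $\beta\rightarrow 0$ and $\beta\rightarrow\infty$, and invokes an intermediate-value argument to match energies before applying Wehrl's inequality --- none of which is needed if one only wants finiteness.

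However, there is a genuine gap at the one point where you supply an argument of your own: the claim that $\exp{-\beta H}\in\Tr{\X}$ forces $\exp{-\beta H_{1,j}}\in\Tr{\X_{1,j}}$. The logic is backwards. Even granting the operator inequality $\exp{-\beta H}\leq\exp{-\beta H_{1,j}}\otimes\id$ (itself problematic, since $t\mapsto e^{-t}$ is not operator monotone; only the eigenvalue/min-max comparison of traces is safe, and it points the wrong way), a relation $0\leq A\leq B$ with $A$ trace class says nothing about $B$: trace-class membership passes \emph{downward} through operator inequalities, not upward. Worse, the majorant $\exp{-\beta H_{1,j}}\otimes\id_{j+1,d}$ is \emph{never} trace class when $\X_{j+1,d}$ is infinite dimensional, so no domination argument of this shape can yield the conclusion. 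In fact the conclusion simply does not follow from the stated hypotheses: take $H_{1,j}=0$ and $R_j=H$ for any positive $H$ with discrete, fast-diverging spectrum; every assumption of the proposition is satisfied, yet $\exp{-\beta H_{1,j}}=\id\notin\Tr{\X_{1,j}}$, so $Z_\beta=\infty$ and your final bound is vacuous. (Choosing the ground state of such an $H$ to have infinite entanglement across the cut even contradicts the proposition as literally stated.) To be fair, the paper's own proof shares this hole --- it writes down the subsystem Gibbs state $\rho_\beta^j$ in Step V without ever establishing that $\exp{-\beta H_{1,j}}$ is trace class --- but the paper leaves that assumption tacit, whereas your proposal asserts a derivation that is false. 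To repair either argument one must assume, in addition, that each local Hamiltonian $H_{1,j}$ itself has a trace-class heat semigroup $\exp{-\beta H_{1,j}}$ for all $\beta>0$; with that hypothesis added, your shortened variational argument goes through.
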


\begin{proof}
    The idea of the proof is as follows. Since the expected energy of the whole
    system is finite, then so is the expected energy of the subsystems. For
    the given energy $E$, we can choose an inverse temperature $\beta>0$,
    such that the corresponding Gibbs state has the same expected energy.
    Since the Gibbs state has maximal entropy (for fixed $E$), this provides
    the upper bound.

    \textbf{I}. For any $1\leq j\leq d$, it holds that
    \begin{align*}
        \infty>E&=\tr{\rho H}=\tr{\rho H_{1,j}\otimes\id_{j+1,d}+\rho R_j}
        \overset{R_j\geq 0}{\geq}\tr{\rho H_{1,j}\otimes\id_{j+1,d}}\\
        &=\tr{\rho_{1,j}H_{1,j}}.
    \end{align*}

    \textbf{II}. Our assumptions imply that
    $\exp{-\beta H}$ is compact and thus
    $H$ must have purely discrete spectrum which is bounded from below and has
    diverging eigenvalues such that
    \begin{align*}
        \tr{\exp{-\beta H}}=\sum_{k=1}^\infty\exp{-\beta\lambda_k}<\infty.
    \end{align*}
    From hereon we assume the eigenvalues of $H$,
    $\set{\lambda_k:\,k\in\N}$, are ordered in non-decreasing order.

    The existence of the Gibbs state for any inverse temperature
    $\beta>0$ implies the expected that energy is finite for any $\beta>0$.
    To see
    this, note that since $\lim_{k\rightarrow\infty}\lambda_k=\infty$,
    there exists an $N\in\N$
    such that $\exp{\frac{\beta}{2}\lambda_k}\geq\lambda_k$ for $k\geq N$. Thus,
    \begin{align*}
        \sum_{k=N}^\infty\exp{-\beta\lambda_k}\lambda_k
        \leq\sum_{k=N}^\infty\exp{-\beta\lambda_k}
        \exp{\frac{\beta}{2}\lambda_k}=\sum_{k=N}^\infty\exp{-\frac{\beta}{2}
        \lambda_k}<\infty.
    \end{align*}
    By applying elementary functional calculus we compute
    $\tr{\exp{-\beta H}H}=\sum_{k=1}^\infty\exp{-\beta\lambda_k}\lambda_k
    <\infty.$

    \textbf{III}. Thus, we have ensured the existence of the Gibbs state
    \begin{align}\label{eq:gibbsdef}
        \rho_\beta:=\exp{-\beta H}/Z,\quad Z:=\tr{\exp{-\beta H}},
    \end{align}
    with finite energy
    \begin{align*}
        E(\beta)=\tr{\rho_\beta H}=\left(\sum_{k=1}^\infty
        \exp{-\beta\lambda_k}\right)^{-1}\sum_{k=1}^\infty\exp{-\beta\lambda_k}
        \lambda_k.
    \end{align*}
    The function $E(\beta)$ is continuous for $\beta\in(0,\infty)$. To see this,
    take any $\beta>0$ and consider the interval $[\beta/2,2\beta]$. Since
    the sequence
    $\set{\lambda_k:\;k\in\N}$ diverges, there exists $N_1\in\N$, such that
    $\exp{-\frac{\beta}{2}\lambda_k}\lambda_k$ is monotonically decreasing for
    $k\geq N_1$ and there exists $N_2=N_2(\varepsilon)\in\N$, such that
    \begin{align*}
        \sum_{k=N_2}^\infty\exp{-\frac{\beta}{2}\lambda_k}\lambda_k\leq
        \varepsilon.
    \end{align*}
    Thus, for any $\varepsilon>0$, we can take
    $N=N(\varepsilon)=\max\set{N_1,\,N_2(\varepsilon)}$ such that for
    $M\geq N$ and any $\tilde{\beta}\in[\beta/2,2\beta]$
    \begin{align*}
        \sum_{k=M}^\infty\exp{-\tilde{\beta}\lambda_k}\lambda_k
        \leq
        \sum_{k=M}^\infty\exp{-\frac{\beta}{2}\lambda_k}\lambda_k
        \leq\varepsilon.
    \end{align*}
    Thus, this series converges uniformly in $\beta$ on compact sets in
    $\R^+$ and is therefore continuous for any $\beta>0$. An analogous
    argument shows the same for the series
    $\sum_{k=1}^\infty\exp{-\beta\lambda_k}$. Hence, $E(\beta)$ is continuous.

    \textbf{IV}. The idea behind this part of the proof is as
    follows: for $\beta\rightarrow 0$, i.e., temperature
    $T\rightarrow\infty$, higher energies become more probable such that we
    anticipate for the expected energy of a Gibbs state
    $E(\beta)\rightarrow\infty$. On the other hand, for $\beta\rightarrow\infty$
    ($T\rightarrow 0$), we expect probabilities to cluster around the ground
    state energy $\lambda_1$.

    Since the sequence $\set{\lambda_k:\;k\in\N}$ diverges,
    for any $C>0$, there exists $N=N(C)\in\N$ such that
    $\frac{1}{N}\sum_{k=1}^N\lambda_k\geq C$ and, clearly,
    $N\rightarrow\infty$ as $C\rightarrow\infty$.

    As was shown in \textbf{III}, both the nominator and denominator
    in $E(\beta)$ are series that converge uniformly in $\beta$.
    Thus, for any $\varepsilon>0$,
    there exists $\tilde{\beta}>0$, such that for any $\beta\leq\tilde{\beta}$
    \begin{align*}
        E(\beta)\geq\frac{(\sum_{k=1}^N\lambda_k)-\varepsilon}{(\sum_{k=1}^N1)+
        \varepsilon}=\frac{1}{N+\varepsilon}\sum_{k=1}^N\lambda_k-
        \frac{\varepsilon}{N+\varepsilon}\geq
        \frac{N}{N+\varepsilon}C-\frac{\varepsilon}{N+\varepsilon}.
    \end{align*}
    Since this is possible for any $\varepsilon$ and any $C>0$, we have
    $\lim_{\beta\rightarrow 0}E(\beta)=\infty$.

    On the other hand, let $p_k:=\exp{-\beta\lambda_k}/Z(\beta)$. Then,
    \begin{align*}
        \left|E(\beta)-\lambda_1\right|=
        \sum_{k=2}^\infty p_k(\beta)(\lambda_k-\lambda_1).
    \end{align*}
    Since this series converges, for any $\varepsilon>0$, there exists
    $N\in\N$, such that $\sum_{k=N}^\infty p_k(\beta)(\lambda_k-\lambda_1)
    \leq\varepsilon$. We can choose $N$ independently of $\beta$, since
    for $\beta\rightarrow\infty$ the series converges faster and
    thus $N$ gets smaller.
    Due to the convergence of $\set{p_2(\beta),\ldots,p_{N-1}(\beta)}$
    for $\beta\rightarrow\infty$,
    there exists $\tilde{\beta}>0$ such that for $\beta\geq\tilde{\beta}$,
    $p_k(\beta)(\lambda_k-\lambda_1)\leq\frac{\varepsilon}{N-2}$ for
    $2\leq k\leq N-2$. Thus,
    \begin{align*}
        \sum_{k=2}^\infty p_k(\beta)(\lambda_k-\lambda_1)=
        \sum_{k=2}^{N-1}p_k(\beta)(\lambda_k-\lambda_1)+
        \sum_{k=N}^\infty p_k(\beta)(\lambda_k-\lambda_1)\leq 2\varepsilon.
    \end{align*}
    Since this is possible for any $\varepsilon>0$, we conclude
    $\lim_{\beta\rightarrow\infty}E(\beta)=\lambda_1$.

    \textbf{V}. Finally, take any $1\leq j\leq d$. By \textbf{I},
    $E_j:=\tr{\rho_{1,j}H_{1,j}}\leq E=\tr{\rho H}<\infty$. By
    \textbf{III-IV}, there exists $\beta>0$ such that
    $\tr{\rho_\beta^jH_{1,j}}=E_j$.

    In \cite[section I.B.5, inequality (1.41)]{Wehrl}
    it is shown that if $\rho$ is a state with $E=\tr{\rho H}$ and
    $\rho_\beta$ is defined as in \eqref{eq:gibbsdef} with
    $E=\tr{\rho_\beta H}=\tr{\rho H}$, then
    $S(\rho)\leq S(\rho_\beta)$. Applying this to our problem we get
    $\s{\rho_{1,j}}\leq \s{\rho_{\beta}^j}=\beta E_j+\log_2(Z)<\infty$.
    This completes the proof.
\end{proof}

\section{Ground State Approximability}\label{sec:alaw}
We turn to the main result of this work.
We illustrate how local interactions
in a Hamiltonian operator imply that
the ground state
can be approximated by operators which have a small overlap
in the support.
Under an additional assumption on the
approximand (see Assumption \ref{ass:tm}), we show that
the entropy of the ground state does not scale with the dimension.
Consequently, by the considerations in Section \ref{sec:entapp},
such eigenfunctions enjoy favorable separability properties within the TT format.
To stay consistent with the typical notation in physics,
we will slightly adapt indices in this Section by shifting all summations to
start from $0$ such that the first eigenvalue will be denoted by $\lambda_0$.

\subsection{Ground State Density Operator}
Before we can proceed with the entropy bound,
we require some
preparations. The key ingredient will be Theorem \ref{thm:obolor}, which
essentially states that the ground state density operator can be approximated
by a product of 3 local operators with overlapping support, where the error
converges exponentially in the length of the overlap.
Indeed, Theorem \ref{thm:obolor} is interesting in its own right and we
consider it to be the main contribution of this work.
This approximation is possible if
the Hamiltonian operator satisfies the following properties.

\begin{assumption}\label{ass:op}
    Let $H:\dom{H}\rightarrow\X$ be a densely defined
    self-adjoint (possibly unbounded) operator.
    \begin{enumerate}
        \item\label{ass:local} (Locality).
        We assume $H$ can be decomposed as
        \begin{align*}
            H=\sum_{j=1}^{d-1}H_{j,j+1},
        \end{align*}
        where each $H_{j,j+1}$ is supported on
        $\X_{j,j+1}$
        (see \eqref{eq:xij}).

    \item\label{ass:gap} (Gap).
        We assume the spectrum is bounded from below and
        the ground state is unique with a non-vanishing spectral gap
        \begin{align}\label{eq:spgap}
            \Delta E:=\lambda_1-\lambda_0>0.
        \end{align}
        We denote the ground state wave function
        by $\uu_0$ and the
        corresponding density operator by
        \begin{align}\label{eq:kuidiot}
            \rho^0:=\inp{\cdot}{\uu_0}_{\X}\uu_0.
        \end{align}

    \item\label{ass:int} (Finite Interaction Strength).
        We assume for all $1\leq j\leq d-1$, $H_{j,j+1}=H_j+H_{j+1}+
        \Phi_{j,j+1}$, where $H_j$ and $H_{j+1}$ are possibly unbounded
        operators supported on $\X_j$ and $\X_{j+1}$, respectively,
        and $\Phi_{j,j+1}$ is a uniformly bounded
        operator\footnote{That models interactions between particles
        $j$ and $j+1$.}
        supported on $\X_{j,j+1}$. I.e., there exists a constant $J$
        such that
        \begin{align}\label{eq:intstr}
            \norm{\Phi_{j,j+1}}{\Lnorm}\leq J,
        \end{align}
        for all $1\leq j\leq d-1$.

    \item\label{ass:comm} (Bounded Commutators).
        The commutators of the neighboring
        interaction and single site operators
        are uniformly bounded\footnote{Or can be uniquely extended to bounded
        operators.},
        i.e.,
        \begin{align*}
            \norm{\comm{\Phi_{j,j+1}}{H_{j+1}}}{\Lnorm}\leq J\quad\text{and}\quad
            \norm{\comm{H_j}{\Phi_{j, j+1}}}{\Lnorm}\leq J,
        \end{align*}
        for all $1\leq j\leq d-1$.

    \item\label{ass:self} (Self-Adjoint).
        The interaction and single site operators $\Phi_{j,j+1}$ and
        $H_{j}$ are self-adjoint.
    \end{enumerate}
\end{assumption}

\begin{remark}\label{rem:ass}
    Assumption \eqref{ass:local} means we only consider local 2-site
    interactions. Our results would remain unchanged for $N$-site interactions,
    for a fixed $N$. The point is that the complexity of approximating
    an eigenfunction scales exponentially with $N$ and not $d$. Moreover,
    we expect similar
    results could be obtained for long range interactions that decay
    sufficiently fast.

    We require Assumption \eqref{ass:self} since the proof heavily relies
    on the spectral decomposition. One could possibly
    generalize the proofs presented
    here to sectorial operators. We are not certain to what extent
    approximability actually depends on the form
    of the resolvent/spectrum of the
    operator in $\C$.

    Assumption \eqref{ass:gap} is necessary for an area law to hold.
    Systems with degenerate ground states are at a
    quantum critical point and have been observed to exhibit
    divergent entanglement entropies (see
    \cite{PlenioAL, VidalQCP, Calabrese}).
    However, uniqueness of the ground state is not necessary for the
    main estimate in Theorem \ref{thm:obolor}.

    Assumptions \eqref{ass:int} and \eqref{ass:comm} are required for the
    application of Lieb-Robinson bounds, i.e., finite speed information
    propagation. There are essentially two difficulties when considering
    information propagation for dynamics prescribed by an unbounded
    operator.

    First, unlike with classic
    Lieb-Robinson bounds (see \cite{LBOriginal}),
    bounded local operators do not have to remain local when evolved via the
    unitary operator $\exp{\i Ht}$ (see \cite{SuperSonice}).
    This can be remedied as in \cite{LBInfDim,
    NearComm} by,
    e.g., assuming the interactions in $H$ are of a certain type,
    such as bounded.
    Hence, we require Assumption \eqref{ass:int}.

    Second, when applying time dynamics to an unbounded local operator, it is
    not clear in which sense the operator remains \emph{approximately}
    local. Thus, Assumption \eqref{ass:comm} ensures that the non-local part
    is bounded.

    However, we essentially require only an application of
    Lieb-Robinson. Although Assumptions \eqref{ass:int} and \eqref{ass:comm}
    are certainly sufficient, they are perhaps not necessary.
\end{remark}

\begin{example}[Nearest Neighbor Interaction (NNI)]\label{ex:nni}
    We provide an example of how the general structure of such an NNI
    Hamiltonian might look like.
    Perhaps the most famous example of an NNI Hamiltonian
    is the \emph{Ising model} (see \cite{Ising}).

    In this work we
    consider infinite dimensional Hilbert spaces and
    unbounded Hamiltonians.
    A typical example to keep in mind is
    $\X=\bigotimes_{j=1}^d\X_j=\bigotimes_{j=1}^d\Lp{\R^n}{\C}$,
    where
    $n\in\set{1,\,2,\,3}$ if $H$ is to model a physical phenomenon.

    Let the Hamiltonian operator be given
    as
    \begin{align*}
        H=-\Delta+V.
    \end{align*}
    The Laplacian $\Delta$ is the one-site unbounded operator
    where $H_j=-\frac{\partial^2}{\partial x_j^2}$. The potential
    $V$ contains the bounded interaction operators. E.g.,
    $V=\sum_{j=1}^{d-1}\Phi_{j,j+1}$, where $\Phi_{j,j+1}:\X\rightarrow\X$ is
    a bounded operator such as
    \begin{align*}
        (\Phi_{j,j+1}\uu)(x)&=c(x_j,\,x_{j+1})\uu(x),\quad\text{or}\\
        (\Phi_{j,j+1}\uu)(x)&=\int_{\R^{2n}}\kappa(x_j,\,x_{j+1},\,y_j,\,y_{j+1})
        \uu(x_1,\,\ldots,\,y_j,\,y_{j+1},\,\ldots,\,x_d)\d (y_j,\,y_{j+1}),
    \end{align*}
    where $c(\cdot)$ is a bounded coefficient function and $\kappa(\cdot)$ is an
    integral kernel.

    We would have to check that a given Hamiltonian has a gap above the ground
    state and if the ground state is unique. Note that the gap property is
    much more important than uniqueness, since the latter is only
    necessary for the area law in Theorem \ref{thm:alaw} and not for the
    approximation in Theorem \ref{thm:obolor}.
    Spectral properties and uniqueness of ground states
    have been extensively studied before and we refer to, e.g.,
    \cite[Chapter XIII]{MathPhys4} for more details.
\end{example}

We begin with a lemma that shows how we can approximately express the
ground state projector through the Hamiltonian operator. This will provide the
necessary link between the local operator structure and the local structure
of the density operator.

\begin{lemma}\label{lemma:pq}
    Let Assumption \ref{ass:op} \eqref{ass:gap} hold. Assume w.l.o.g.\
    that $\lambda_0=0$ and let $\rho^0$
    denote the corresponding density operator (see \eqref{eq:kuidiot}).
    Then, for any $q>0$ and
    \begin{align}\label{eq:pq}
        \rho^q:=\frac{1}{\sqrt{2\pi q}}\int_{-\infty}^\infty
        \exp{\i Ht}\exp{-\frac{t^2}{2q}}\d t,
    \end{align}
    we have
    \begin{align*}
        \norm{\rho^q-\rho^0}{\Lnorm}
        \leq\exp{-\frac{1}{2}(\Delta E)^2q},
    \end{align*}
    with $\Delta E$ from \eqref{eq:spgap}.
\end{lemma}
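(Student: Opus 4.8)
The plan is to recognize $\rho^q$ as the Gaussian filter $f(\lambda)=\exp{-q\lambda^2/2}$ applied to $H$ through the functional calculus, to prove the clean identity $\rho^q=\exp{-qH^2/2}$, and then read off the norm of $\rho^q-\rho^0$ directly from the spectral gap. First I would invoke the spectral theorem for the self-adjoint (possibly unbounded) operator $H$, writing $H=\int_{\sigma(H)}\lambda\,\d P(\lambda)$ with $P$ its projection-valued measure, so that by Stone's theorem $\exp{\i Ht}=\int_{\sigma(H)}\exp{\i\lambda t}\,\d P(\lambda)$ for every $t\in\R$.

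The core step is the identity $\rho^q=\exp{-qH^2/2}$. For fixed $\psi\in\X$ the map $t\mapsto\exp{\i Ht}\psi$ is continuous and bounded by $\norm{\psi}{\X}$, so the defining integral converges as a Bochner integral in $\X$ and yields a bounded operator with $\norm{\rho^q}{\Lnorm}\leq 1$. I would then verify the identity against matrix elements: for $\phi,\psi\in\X$ let $\mu_{\phi,\psi}(\cdot):=\inp{\phi}{P(\cdot)\psi}_{\X}$ be the associated complex spectral measure, of total variation at most $\norm{\phi}{\X}\norm{\psi}{\X}$. Then
\[\inp{\phi}{\rho^q\psi}_{\X}=\frac{1}{\sqrt{2\pi q}}\int_{-\infty}^\infty\left(\int_{\sigma(H)}\exp{\i\lambda t}\,\d\mu_{\phi,\psi}(\lambda)\right)\exp{-\frac{t^2}{2q}}\,\d t.\]
Since $\mu_{\phi,\psi}$ is finite and the Gaussian weight integrable, Fubini lets me exchange the two integrations; the inner $t$-integral is the Fourier transform of a Gaussian, $\frac{1}{\sqrt{2\pi q}}\int_{-\infty}^\infty\exp{\i\lambda t}\exp{-t^2/(2q)}\,\d t=\exp{-q\lambda^2/2}$. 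Hence
\[\inp{\phi}{\rho^q\psi}_{\X}=\int_{\sigma(H)}\exp{-q\lambda^2/2}\,\d\mu_{\phi,\psi}(\lambda)=\inp{\phi}{\exp{-qH^2/2}\psi}_{\X},\]
and since $\phi,\psi$ are arbitrary, $\rho^q=\exp{-qH^2/2}$.

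Finally I would bring in Assumption \ref{ass:op}\eqref{ass:gap}. With the normalization $\lambda_0=0$, uniqueness of the ground state makes $\ker H$ one-dimensional, so $\rho^0$ is precisely the spectral projection $P(\{0\})$, while the gap forces $\sigma(H)\setminus\{0\}\subset[\Delta E,\infty)$. Writing $f(\lambda)=\exp{-q\lambda^2/2}$ with $f(0)=1$, functional calculus gives
\[\rho^q-\rho^0=f(H)-P(\{0\})=\int_{\sigma(H)\setminus\{0\}}\exp{-q\lambda^2/2}\,\d P(\lambda),\]
an operator whose norm equals $\sup_{\lambda\in\sigma(H),\,\lambda\geq\Delta E}\exp{-q\lambda^2/2}$. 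As $\lambda\mapsto\exp{-q\lambda^2/2}$ decreases on $[0,\infty)$, this supremum is attained at $\lambda=\Delta E$, giving $\norm{\rho^q-\rho^0}{\Lnorm}\leq\exp{-\frac12(\Delta E)^2q}$ (indeed with equality). I expect the only genuine difficulty to be the rigorous treatment of the unbounded $H$: fixing the topology in which the operator-valued integral is read (strongly, since $t\mapsto\exp{\i Ht}$ need not be norm-continuous) and justifying the Fubini interchange. Both reduce to the finiteness of the scalar spectral measures $\mu_{\phi,\psi}$, after which the estimate follows from the elementary Gaussian integral together with the spectral gap.
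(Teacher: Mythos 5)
Your proposal is correct and follows essentially the same route as the paper: spectral theorem plus Stone's theorem, a Fubini exchange justified by the finiteness of the scalar spectral measures, the Gaussian Fourier transform $\frac{1}{\sqrt{2\pi q}}\int_{-\infty}^{\infty}\exp{\i\lambda t}\exp{-\frac{t^2}{2q}}\d t=\exp{-\frac{1}{2}\lambda^2 q}$, and the gap assumption to bound the resulting spectral integral over $\sigma(H)\setminus\set{\lambda_0}$. Your explicit packaging of the result as the identity $\rho^q=\exp{-\frac{q}{2}H^2}$ and your use of off-diagonal matrix elements $\inp{\phi}{\rho^q\psi}_{\X}$ rather than the paper's diagonal ones are only cosmetic differences, not a different argument.
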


\begin{proof}
    The operator $U(t):=\exp{\i Ht}\exp{-\frac{t^2}{2q}}$ is strongly continuous
    for all $t\in\R$. Thus, a finite integral of $U(t)$ is well defined.
    For any $\uu\in\X$
    \begin{align*}
        \lim_{c\rightarrow\infty}\norm{\frac{1}{\sqrt{2\pi q}}
        \int_{-c}^cU(t)\uu\d t}{\X}\leq
        \lim_{c\rightarrow\infty}\norm{\uu}{\X}\frac{1}{\sqrt{2\pi q}}
        \int_{-c}^c\exp{-\frac{t^2}{2q}}\d t=\norm{\uu}{\X}.
    \end{align*}
    Thus, the integral \eqref{eq:pq} is well defined.

    Since $H$ is self-adjoint, we have the spectral decomposition
    \begin{align*}
        H=\int_{\sigma(H)}\lambda\d P(\lambda),
    \end{align*}
    where $P:\sigma(H)\rightarrow\LL{\X}$ is a projection valued measure.
    Due to the gap assumption, we get that $\rho^0=P(\lambda_0)$.

    Applying functional calculus for self-adjoint operators
    \begin{align*}
        \exp{\i Ht}=\int_{\sigma(H)}\exp{\i\lambda t}\d P(\lambda).
    \end{align*}
    Equation
    \eqref{eq:pq} is to be interpreted as the unique operator such that
    for any $\uu\in\X$
    \begin{align*}
        \inp{\uu}{\frac{1}{\sqrt{2\pi q}}\int_{-\infty}^\infty U(t)\uu\d t}_{\X}
        &=
        \frac{1}{\sqrt{2\pi q}}\int_{-\infty}^\infty
        \exp{-\frac{t^2}{2q}}\inp{\uu}{U(t)\uu}_{\X}\d t\\
        &=
        \frac{1}{\sqrt{2\pi q}}\int_{-\infty}^\infty
        \exp{-\frac{t^2}{2q}}\int_{\sigma(H)}\exp{\i\lambda t}\d P_{\uu}(\lambda)\d t,
    \end{align*}
    where $P_{\uu}(\cdot)=\inp{\uu}{P(\cdot)\uu}_{\X}$ and the equality
    follows from the linearity and continuity of the $\X$-inner product.
    For the last integral we can apply Fubini's Theorem
    for general product measures.
    This allows us to write
    \begin{align*}
        \rho^q&=\frac{1}{\sqrt{2\pi q}}\int_{-\infty}^\infty \int_{\sigma(H)}
        \exp{\i\lambda t}\exp{-\frac{t^2}{2q}}\d P(\lambda)\d t\\
        &\overset{\text{gap}}{=}\frac{1}{\sqrt{2\pi q}}\int_{-\infty}^\infty
        \rho^0\exp{-\frac{t^2}{2q}}+\int_{\sigma(H)\setminus\set{\lambda_0}}
        \exp{i\lambda t}\exp{-\frac{t^2}{2q}}\d P(\lambda)\d t\\
        &\overset{\text{Fubini}}{=}
        \rho^0+\frac{1}{\sqrt{2\pi q}}\int_{\sigma(H)\setminus\set{\lambda_0}}
        \int_{-\infty}^\infty\exp{\i\lambda t}\exp{-\frac{t^2}{2q}}\d t\d P(\lambda).
    \end{align*}
    The last term is the Fourier transform of the density of the
    normal distribution. Thus,
    \begin{align*}
        \norm{\rho^q-\rho^0}{\Lnorm}=
        \norm{\int_{\sigma(H)\setminus\set{\lambda_0}}
        \exp{-\frac{1}{2}\lambda^2 q}\d P(\lambda)}{\Lnorm}
        \leq\exp{-\frac{1}{2}(\Delta E)^2q},
    \end{align*}
    which completes the proof.
\end{proof}

Next, we want to approximate $H$ by a sum of three local operators, where each
operator
approximately annihilates the ground state. To this end, we apply Hasting's
quasi-adiabatic continuation
technique (see \cite{QA, Hastings}),
which was also studied
in \cite{NearComm} in the infinite dimensional setting\footnote{There the
authors considered this technique in order to describe states that belong to
the same phase.}.

\begin{lemma}\label{lemma:ann}
    Suppose Assumption \ref{ass:op} (\eqref{ass:local}-\eqref{ass:comm})
    holds.

    For a fixed $l\in\N$ and a
    fixed $1+l\leq j\leq d-2-l$,
    \begin{align*}
        H_L&:=\sum_{k\leq j-l-2}H_{k,k+1},\quad
        H_B:=\sum_{j-l-1\leq k\leq j+l+1}H_{k,k+1},\quad
        H_R:=\sum_{k\geq j+l+2}H_{k,k+1}.
    \end{align*}
    W.l.o.g.\ let $\inp{\uu_0}{H_L\uu_0}_{\X}=
    \inp{\uu_0}{H_B\uu_0}_{\X}=\inp{\uu_0}{H_R\uu_0}_{\X}=0$
    and $\lambda_0=0$. Then, for any $q>0$ and
    \begin{align*}
        \tilde{H}_L&:=\frac{1}{\sqrt{2\pi q}}\int_{-\infty}^\infty H_L(t)\exp{-\frac{t^2}{2q}}\d t,\\
        \tilde{H}_B&:=\frac{1}{\sqrt{2\pi q}}\int_{-\infty}^\infty H_B(t)\exp{-\frac{t^2}{2q}}\d t,\\
        \tilde{H}_R&:=\frac{1}{\sqrt{2\pi q}}\int_{-\infty}^\infty H_R(t)\exp{-\frac{t^2}{2q}}\d t,
    \end{align*}
    where $H_{\cdots}(t)$ is given as in \eqref{eq:evolve} by, e.g.,
    \begin{align*}
        H_L(t):=\exp{\i Ht}H_L\exp{-\i Ht},
    \end{align*}
    we have
    \begin{align*}
        \norm{\tilde{H}_L\uu_0}{\X}&\leq
        3J^2(\Delta E)^{-1}\exp{-\frac{1}{2}(\Delta E)^2q},\\
        \norm{\tilde{H}_B\uu_0}{\X}&\leq
        3J^2(\Delta E)^{-1}\exp{-\frac{1}{2}(\Delta E)^2q},\\
        \norm{\tilde{H}_R\uu_0}{\X}&\leq
        3J^2(\Delta E)^{-1}\exp{-\frac{1}{2}(\Delta E)^2q}.
    \end{align*}
    The constant $J$ is the interaction strength from \eqref{eq:intstr}.
\end{lemma}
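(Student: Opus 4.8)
The plan is to treat the three operators uniformly. Write $A$ for any of $H_L,H_B,H_R$ and estimate $\norm{\tilde{H}_A\uu_0}{\X}$, where $\tilde{H}_A=\frac{1}{\sqrt{2\pi q}}\int_{-\infty}^\infty A(t)\exp{-\frac{t^2}{2q}}\d t$. First I would exploit that $\lambda_0=0$, so $\exp{-\i Ht}\uu_0=\uu_0$ and therefore $A(t)\uu_0=\exp{\i Ht}A\uu_0$. Running the spectral-calculus and Fubini argument already used for Lemma \ref{lemma:pq}, but applied to the vector $A\uu_0$ instead of to an operator, the Gaussian integral collapses the time integral into the Fourier transform of a Gaussian and yields
\begin{align*}
    \tilde{H}_A\uu_0=\int_{\sigma(H)}\exp{-\tfrac{1}{2}\lambda^2 q}\d P(\lambda)\,A\uu_0.
\end{align*}
Since $A$ is self-adjoint (Assumption \ref{ass:op}\eqref{ass:self}) and $\inp{\uu_0}{A\uu_0}_{\X}=0$ by hypothesis, we have $\rho^0 A\uu_0=P(\set{\lambda_0})A\uu_0=0$, so the integral effectively runs only over $\sigma(H)\setminus\set{0}\subset[\Delta E,\infty)$.

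The key point, and what makes the bound independent of the generally unbounded quantity $\norm{A\uu_0}{\X}$, is to trade the factor $A\uu_0$ for a commutator. Because $H\uu_0=0$, we have $\comm{H}{A}\uu_0=HA\uu_0$, and the spectral theorem gives $\lambda\,\d P(\lambda)A\uu_0=\d P(\lambda)HA\uu_0=\d P(\lambda)\comm{H}{A}\uu_0$ on $\sigma(H)\setminus\set{0}$. Substituting,
\begin{align*}
    \tilde{H}_A\uu_0=\int_{\sigma(H)\setminus\set{0}}
    \frac{\exp{-\frac{1}{2}\lambda^2 q}}{\lambda}\d P(\lambda)\comm{H}{A}\uu_0.
\end{align*}
The scalar weight $\lambda\mapsto\lambda^{-1}\exp{-\frac{1}{2}\lambda^2 q}$ is decreasing on $(0,\infty)$, hence bounded by $(\Delta E)^{-1}\exp{-\frac{1}{2}(\Delta E)^2 q}$ on $[\Delta E,\infty)$. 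Noting that $\comm{H}{A}\uu_0=HA\uu_0$ is also orthogonal to $\uu_0$, Parseval for the projection-valued measure gives
\begin{align*}
    \norm{\tilde{H}_A\uu_0}{\X}\leq\frac{\exp{-\frac{1}{2}(\Delta E)^2 q}}{\Delta E}
    \norm{\comm{H}{A}\uu_0}{\X}
    \leq\frac{\exp{-\frac{1}{2}(\Delta E)^2 q}}{\Delta E}\norm{\comm{H}{A}}{\B},
\end{align*}
using $\norm{\uu_0}{\X}=1$ and that $\comm{H}{A}$ extends to a bounded operator.

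It then remains to bound the commutator norms. Writing $H=H_L+H_B+H_R$ and using that $H_L$ and $H_R$ have supports separated by the block, one gets $\comm{H}{H_L}=\comm{H_B}{H_L}$ and $\comm{H}{H_R}=\comm{H_B}{H_R}$, each localized at a single interface, whereas $\comm{H}{H_B}=\comm{H_L}{H_B}+\comm{H_R}{H_B}$ picks up the two interfaces of the block. At the interface sharing a site $m$, only the neighboring terms $H_{m-1,m}$ and $H_{m,m+1}$ fail to commute; expanding $H_{k,k+1}=H_k+H_{k+1}+\Phi_{k,k+1}$ leaves exactly the three nonzero contributions $\comm{\Phi_{m-1,m}}{H_m}$, $\comm{H_m}{\Phi_{m,m+1}}$ and $\comm{\Phi_{m-1,m}}{\Phi_{m,m+1}}$. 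These are precisely the quantities controlled by Assumptions \ref{ass:op}\eqref{ass:int}–\eqref{ass:comm}, so each interface commutator is bounded by a constant multiple of $J^2$; carrying this through for $H_L$, $H_B$ and $H_R$ yields the claimed bound $3J^2(\Delta E)^{-1}\exp{-\frac{1}{2}(\Delta E)^2 q}$.

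I expect the main obstacle to be rigor in the unbounded setting rather than the scalar estimates. One must verify that $A\uu_0\in\dom{H}$ so that the identity $\comm{H}{A}\uu_0=HA\uu_0$ is legitimate, that $\comm{H}{A}$ genuinely extends to a bounded operator (which is where Assumption \ref{ass:op}\eqref{ass:comm}, and the cancellation of all single-site and long-distance terms, is essential), and that the interchange of the $t$-integral with the spectral integral is justified exactly as in the proof of Lemma \ref{lemma:pq}. Once these domain and Fubini issues are handled, the commutator identity and the monotonic weight estimate furnish the exponential decay in $q$ uniformly in the block position.
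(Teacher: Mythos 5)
Your proposal is correct and takes essentially the same route as the paper's proof: the spectral calculus collapses the Gaussian time average to the weight $\exp{-\frac{1}{2}\lambda^2q}$, the identity $HA\uu_0=\comm{H}{A}\uu_0$ (legitimate because $H\uu_0=0$) trades the unbounded factor for the interface commutators controlled by Assumptions \ref{ass:op} \eqref{ass:int}--\eqref{ass:comm}, and the spectral gap produces the factor $(\Delta E)^{-1}\exp{-\frac{1}{2}(\Delta E)^2q}$. The only, immaterial, difference is bookkeeping: the paper first establishes $\norm{H\tilde{H}_L\uu_0}{\X}\geq\Delta E\norm{\tilde{H}_L\uu_0}{\X}$ and then bounds $\norm{H\tilde{H}_L\uu_0}{\X}$ by $3J^2\exp{-\frac{1}{2}(\Delta E)^2q}$, whereas you fold the weight $\lambda^{-1}\exp{-\frac{1}{2}\lambda^2q}$ directly into the spectral integral and bound it by monotonicity.
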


\begin{proof}
    By the same arguments as in Lemma \ref{lemma:pq}, the integrals are
    well defined. Next, application to the ground state yields
    \begin{align*}
        H_L(t)\uu_0&=\exp{\i Ht}H_L\int_{\sigma(H)}\exp{-\i\lambda t}\d P(\lambda)
        \uu_0\overset{\lambda_0=0}{=}\exp{\i Ht}H_L\uu_0
        =
        \int_{\sigma(H)}\exp{\i\lambda t}\d P(\lambda)H_L\uu_0\\
        &=\inp{\uu_0}{H_L\uu_0}\uu_0+\int_{\sigma(H)\setminus\set{\lambda_0}}
        \exp{\i\lambda t}\d P(\lambda)H_L\uu_0.
    \end{align*}
    Thus,
    \begin{align*}
        \tilde{H}_L\uu_0&=\frac{1}{\sqrt{2\pi q}}\int_{-\infty}^\infty
        H_L(t)\uu_0\exp{-\frac{t^2}{2q}}\d t\\
        &=\int_{\sigma(H)\setminus\set{\lambda_0}}\frac{1}{\sqrt{2\pi q}}
        \int_{-\infty}^\infty\exp{\i\lambda t}\exp{-\frac{t^2}{2q}}\d t\d P(\lambda)
        H_L\uu_0\\
        &=
        \int_{\sigma(H)\setminus\set{\lambda_0}}\exp{-\frac{1}{2}\lambda^2 q}
        \d P(\lambda)H_L\uu_0.
    \end{align*}
    On the other hand,
    \begin{align*}
        H\tilde{H}_L\uu_0=\int_{\sigma(H)\setminus\set{\lambda_0}}
        \lambda\exp{-\frac{1}{2}\lambda^2q}\d P(\lambda)H_L\uu_0.
    \end{align*}
    Hence,
    \begin{align}\label{eq:hhl}
        \norm{H\tilde{H}_L\uu_0}{\X}\geq \Delta E\norm{
        \int_{\sigma(H)\setminus\set{\lambda_0}}\exp{-\frac{1}{2}\lambda^2q}
        \d P(\lambda)H_L\uu_0}{\X}=\Delta E\norm{\tilde{H}_L\uu_0}{\X}.
    \end{align}
    Next, since $H\uu_0=0$,
    \begin{alignat*}{2}
        H\tilde{H}_L\uu_0&=&&(H\tilde{H}_L-\tilde{H}_LH)\uu_0
        =\int_{\sigma(H)\setminus\set{\lambda_0}}\exp{-\frac{1}{2}\lambda^2q}
        \d P(\lambda)\comm{H}{H_L}\uu_0\\
        &=&&\int_{\sigma(H)\setminus\set{\lambda_0}}\exp{-\frac{1}{2}\lambda^2q}
        \d P(\lambda)\comm{H_{j-l-1,j-l}}{H_{j-l-2,j-l-1}}\uu_0\\
        &=&&\int_{\sigma(H)\setminus\set{\lambda_0}}\exp{-\frac{1}{2}\lambda^2q}
        \d P(\lambda)
        \bigg\{
        \comm{H_{j-l-1}}{(\Phi_{j-l-2,j-l-1}-\Phi_{j-l-1,j-l})}\\
        & &&+\comm{\Phi_{j-l-1,j-l}}{\Phi_{j-l-2,j-l-1}}\bigg\}\uu_0,
    \end{alignat*}
    where the last equality follows from Assumption \eqref{ass:int}.
    And thus
    \begin{align*}
        \norm{H\tilde{H}_L\uu_0}{\X}\leq 3J^2\exp{-\frac{1}{2}(\Delta E)^2q}.
    \end{align*}
    Together with \eqref{eq:hhl}
    \begin{align*}
        \norm{\tilde{H}_L\uu_0}{\X}\leq
        3J^2(\Delta E)^{-1}\exp{-\frac{1}{2}(\Delta E)^2q},
    \end{align*}
    and analogously for $\tilde{H}_B$, $\tilde{H}_R$. This completes the
    proof.
\end{proof}

\begin{remark}
    Note that the above bound depends explicitly only on $q$, $\Delta E$
    and $J$. In fact, more
    precisely, the bound depends on an estimate for
    \begin{align*}
        \norm{\comm{H}{H_L}}{\Lnorm},
    \end{align*}
    and the latter was assumed in \eqref{eq:intstr} to be uniformly bounded,
    i.e., in particular it is independent of $j$ or $l$.

    However, the subsequent lemmas will employ Lieb-Robinson bounds that
    depend explicitly on the parameter $l$. Thus, we will eventually use the
    above lemma and choose
    the constant $q$ depending on the spectral gap
    $\Delta E$ and the parameter $l$.
\end{remark}

Next, we show that $\tilde{H}_L$, $\tilde{H}_B$ and $\tilde{H}_R$
are approximately local\footnote{In the sense specified by the following lemma.}.
This is mainly due to
Lieb-Robinson type estimates.
\begin{lemma}\label{lemma:ml}
    Under Assumption \ref{ass:op} (\eqref{ass:local}, \eqref{ass:int},
    \eqref{ass:comm}), there exist local bounded operators
    $\diff_L$, $\diff_B$ and
    $\diff_R$ supported on
    $\X_{j-2l-2, j}$,
    $\X_{j-2l-2,j+2l+3}$ and
    $\X_{j+1, j+2l+3}$,
    respectively, such that for
    \begin{align*}
        M_L&:=H_L+\diff_L,\quad
        M_B:=H_B+\diff_B,\quad
        M_R:=H_R+\diff_R,
    \end{align*}
    there exist constants $c_1>0$, $C_1>0$ such that
    \begin{align*}
        \norm{\tilde{H}_L-M_L}{\Lnorm}&\leq C_1J^2\max\set{q^{1/2},\,q^{3/2}}\exp{-c_1l},\\
        \norm{\tilde{H}_B-M_B}{\Lnorm}&\leq C_1J^2\max\set{q^{1/2},\,q^{3/2}}\exp{-c_1l},\\
        \norm{\tilde{H}_R-M_R}{\Lnorm}&\leq C_1J^2\max\set{q^{1/2},\,q^{3/2}}\exp{-c_1l},
    \end{align*}
    where $q$, $l$ are the parameters from Lemma \ref{lemma:ann} and
    $J$ is the interaction strength from \eqref{eq:intstr}.
\end{lemma}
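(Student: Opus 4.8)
The plan is to reduce everything to the \emph{bounded} boundary commutators and then invoke a Lieb--Robinson localization. First I would observe that, exactly as in the proof of Lemma \ref{lemma:ann}, the full generator acts on a block only through its boundary: since $H=H_L+H_B+H_R$ and non-adjacent blocks commute, $\comm{H}{H_L}=\comm{H_{j-l-1,j-l}}{H_{j-l-2,j-l-1}}$, which by Assumptions \eqref{ass:int} and \eqref{ass:comm} is a \emph{bounded} operator with $\norm{\comm{H}{H_L}}{\Lnorm}\le 3J^2$ supported on the three-site window $\X_{j-l-2,j-l}$ (and analogously $\comm{H}{H_B}$ is supported near the two boundaries of the middle block and $\comm{H}{H_R}$ near the single boundary of the right block). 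Writing $H_L(t)-H_L=\i\int_0^t\comm{H}{H_L}(s)\d s$ with $\comm{H}{H_L}(s)$ as in \eqref{eq:evolve} and inserting this into the Gaussian smearing gives
\begin{align*}
    \tilde{H}_L-H_L=\frac{\i}{\sqrt{2\pi q}}\int_{-\infty}^\infty
    e^{-t^2/2q}\int_0^t\comm{H}{H_L}(s)\d s\,\d t,
\end{align*}
from which $\norm{\tilde{H}_L-H_L}{\Lnorm}\le 3J^2\sqrt{2q/\pi}$ is immediate. In particular $\tilde{H}_L-H_L$ is bounded, so the unbounded single-site terms $H_j$ never have to be localized directly; only the bounded commutators do.

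Second, I would localize the time-evolved commutator. Because $\comm{H}{H_L}$ is bounded and supported on a window at the block boundary, the Lieb--Robinson bounds for interactions of the type in Assumption \eqref{ass:int} (the infinite-dimensional/unbounded versions of \cite{NearComm, LBInfDim}) furnish a velocity $v$ and rate $\mu>0$ so that $\comm{H}{H_L}(s)$ is approximable, in $\norm{\cdot}{\Lnorm}$, by an operator supported on the window enlarged by $l$ sites with error $\lesssim J^2 e^{-\mu(l-v\vert s\vert)}$. Denoting this localized operator by $\mathbb{E}_l(\comm{H}{H_L}(s))$, a contraction onto operators supported on $\X_{j-2l-2,j}$, I define $\diff_L$ by the same double integral as above with $\comm{H}{H_L}(s)$ replaced by $\mathbb{E}_l(\comm{H}{H_L}(s))$; by construction $M_L=H_L+\diff_L$ is supported on $\X_{j-2l-2,j}$, and likewise for the middle block (two boundaries, region $\X_{j-2l-2,j+2l+3}$) and the right block (the mirror of the left, region $\X_{j+1,j+2l+3}$).

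Third, the error is then the smeared localization defect
\begin{align*}
    \norm{\tilde{H}_L-M_L}{\Lnorm}\le
    \frac{1}{\sqrt{2\pi q}}\int_{-\infty}^\infty
    e^{-t^2/2q}\int_0^{\vert t\vert}
    \norm{\comm{H}{H_L}(s)-\mathbb{E}_l(\comm{H}{H_L}(s))}{\Lnorm}\d s\,\d t,
\end{align*}
which I would estimate by splitting the $t$-integral at the light-cone time $\vert t\vert\sim l/(2v)$. Inside the cone the Lieb--Robinson factor is $\lesssim e^{-\mu l/2}$, and the Gaussian moments $\frac{1}{\sqrt{2\pi q}}\int\vert t\vert^k e^{-t^2/2q}\d t\sim q^{k/2}$ (with $k$ up to $3$, from the nested time integrals together with the Lieb--Robinson prefactor) produce the polynomial factor $\max\set{q^{1/2},q^{3/2}}$; outside the cone the Gaussian tail $\int_{\vert t\vert>l/(2v)}\vert t\vert e^{-t^2/2q}\d t\sim q\,e^{-l^2/(8v^2q)}$, after the $1/\sqrt{2\pi q}$ normalization and the bound $l^2/q\gtrsim l$ valid in the relevant regime $q\lesssim l$, is again $\lesssim q^{1/2}e^{-c_1 l}$. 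Collecting the two contributions gives $C_1 J^2\max\set{q^{1/2},q^{3/2}}e^{-c_1 l}$, with the middle and right blocks handled identically.

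The only non-routine ingredient, and hence the main obstacle, is the Lieb--Robinson localization itself in the infinite-dimensional, unbounded-Hamiltonian setting. Classical Lieb--Robinson theory and the associated partial-trace/Haar-average localization maps $\mathbb{E}_l$ presuppose finite-dimensional on-site spaces and globally bounded interactions, whereas here the on-site $H_j$ are unbounded and $\exp{\i Ht}$ need not preserve strict locality. The reductions above are arranged precisely so that $\mathbb{E}_l$ is only ever applied to the \emph{bounded} operators $\comm{H}{H_L}(s)$, which is where Assumptions \eqref{ass:int}--\eqref{ass:comm} are essential; making the existence of $\mathbb{E}_l$ and the quantitative decay $\lesssim J^2 e^{-\mu(l-v\vert s\vert)}$ rigorous is the step that must lean on \cite{NearComm, LBInfDim} rather than on the classical theory.
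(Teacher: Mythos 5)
Your proposal is correct and follows essentially the same route as the paper's proof: the Duhamel expansion $H_L(t)=H_L+\int_0^t\i\comm{H}{H_L}(\tau)\d\tau$, the observation that Assumptions \eqref{ass:int}--\eqref{ass:comm} make the boundary commutator bounded and local, the Lieb--Robinson localization map (your $\mathbb{E}_l$ is the paper's $\Pi$ from \cite[Lemma 3.2]{NearComm}, with \cite[Corollary 2.2]{LBInfDim} supplying the bound), and the light-cone splitting of the Gaussian integral at $T\sim l/(2v)$ yielding the $\max\set{q^{1/2},\,q^{3/2}}\exp{-c_1l}$ factor. The only cosmetic difference is that you bound the tail using the restriction $q\lesssim l$ while the paper keeps the Lieb--Robinson exponential inside the tail integral; this is immaterial since the lemma is applied with $q\propto l$ (and the paper's own tail estimate implicitly uses the same regime).
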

\begin{proof}
    First, note that we can differentiate $H_L(t)$ to obtain
    \begin{align*}
        \frac{\d}{\d t}H_L(t)&=\frac{\d}{\d t}\exp{\i Ht}H_L\exp{-\i Ht}\\
        &=\exp{\i Ht}\i HH_L\exp{-\i Ht}-\exp{\i Ht}iH_LH\exp{-\i Ht}\\
        &=\exp{\i Ht}\i\comm{H}{H_L}\exp{-\i Ht}
        =:\i\comm{H}{H_L}(t),
    \end{align*}
    and $H_L(0)=H_L$.
    Thus, we can write
    \begin{align}\label{eq:thanks1}
        H_L(t)=H_L+\int_{0}^t\i\comm{H}{H_L}(\tau)\d\tau.
    \end{align}
    By Assumption, $\comm{H}{H_L}$ is bounded and supported on
    $\X_{j-l-2,j-l}$. Consequently, we can write
    \begin{align*}
        \tilde{H}_L=H_L+\frac{1}{\sqrt{2\pi q}}\int_{-\infty}^\infty
        \int_0^t\i\comm{H}{H_L}(\tau)\d\tau\exp{-\frac{t^2}{2q}}\d t.
    \end{align*}
    Since the commutator is bounded and local, and the interactions in
    $H$ are bounded, by \cite[Corollary 2.2]{LBInfDim},
    we know a Lieb-Robinson bound
    applies to $\comm{H}{H_L}$. I.e., there exists a constant (velocity)
    $v\geq 0$ and constants $C>0$, $a>0$, such that
    \begin{align}\label{eq:lrbnd}
        &\norm{\comm{\comm{H}{H_L}(\tau)}{B}}{\Lnorm}
        \leq C\norm{\comm{H}{H_L}}{\Lnorm}
        \norm{B}{\Lnorm}\exp{-a\left\{\dist(\comm{H}{H_L}, B)-v|\tau|
        \right\}},
    \end{align}
    for all bounded and local $B$.
    Thus, by \cite[Lemma 3.2]{NearComm},
    there exists a map $\Pi:\LL{\X}\rightarrow\LL{\X}$ such that
    $\Pi(A)$ is supported on $\X_{j-2l-2,j}$ and, for any $A\in\LL{\X}$
    satisfying \eqref{eq:lrbnd} with $\dist(A, B)\geq l$, we have
    \begin{align*}
        \norm{A-\Pi(A)}{\Lnorm}\leq 2C\norm{A}{\Lnorm}
        \exp{-a\left\{l-v|\tau|
        \right\}}.
    \end{align*}
    Then, using \cite[Theorem 3.4]{NearComm}, we integrate over time
    and further estimate
    \begin{align*}
        &\norm{\int_{0}^t\comm{H}{H_L}(\tau)\d\tau-
        \int_0^t\Pi\left(\comm{H}{H_L}(\tau)\right)
        \d\tau}{\Lnorm}
        \leq
        |t|CJ^2\exp{-a\{l-v|t|\}}.
    \end{align*}

    We define the operator
    \begin{align*}
        \diff_L:=\frac{1}{\sqrt{2}\pi q}\int_{-\infty}^\infty
        \int_0^t \Pi\left(\i\comm{H}{H_L}(\tau)\right)\d\tau\exp{-\frac{t^2}{2q}}
        \d t.
    \end{align*}
    By all of the above,
    this operator is bounded and supported on $\X_{j-2l-2, j}$. Analogously,
    we define $\diff_B$ and $\diff_R$ with supports in
    $\X_{j-2l-2,j+2l+3}$ and $\X_{j+1, j+2l+3}$, respectively.

    What remains is to truncate the tails of the integral to obtain an
    overall error of the same order as the Lieb-Robinson bound. Let
    $T=\frac{l}{2v}$. Then,
    \begin{align}\label{eq:trunc}
        &\norm{\tilde{H}_L^1-M_L}{\Lnorm}=\notag\\
        &=
        \norm{\frac{1}{\sqrt{2\pi q}}\int_{-\infty}^\infty\int_{0}^t
        \left(\i\comm{H}{H_L}(\tau)-
        \Pi\left(\i\comm{H}{H_L}(\tau)\right)\right)\d\tau
        \exp{-\frac{t^2}{2q}}\d t}{\Lnorm}\notag\\
        &=\norm{\frac{1}{\sqrt{2\pi q}}\left(
        \int_{|t|\leq T}\ldots+\int_{|t|>T}\ldots\right)}{\Lnorm}\notag\\
        &\leq
        CJ^2\exp{-al}
        \left(\exp{avT}\frac{1}{\sqrt{2\pi q}}\int_{|t|\leq T}
        |t|\exp{-\frac{t^2}{2q}}\d t+\frac{1}{\sqrt{2\pi q}}\int_{|t|>T}
        |t|\exp{av|t|-\frac{t^2}{2q}}\d t\right).
    \end{align}
    For the first term
    \begin{align*}
        \frac{1}{\sqrt{2\pi q}}\int_{|t|\leq T}
        |t|\exp{-\frac{t^2}{2q}}\d t\leq \sqrt{\frac{q}{2\pi}}.
    \end{align*}
    For the second
    \begin{alignat*}{2}
        \frac{1}{\sqrt{2\pi q}}\int_{|t|>T}
        |t|\exp{av|t|-\frac{t^2}{2q}}\d t
        &=&&\sqrt{\frac{2}{\pi q}}\int_{T}^\infty
        t\exp{avt-\frac{t^2}{2q}}\d t\\
        &=&&q\exp{avT-\frac{T^2}{2q}}\\
        & &&+avq
        \int_T^\infty\exp{avt-\frac{t^2}{2q}}\d t.
    \end{alignat*}
    For the latter term
    \begin{align*}
        \int_T^\infty\exp{avt-\frac{t^2}{2q}}\d t&=
        \int_T^\infty\exp{\frac{(av)^2q}{2}-
        \left(\sqrt{\frac{1}{2q}}t-av\sqrt{\frac{q}{2}}\right)^2}\d t\\
        &=
        \sqrt{2q}\int_{\sqrt{\frac{1}{2q}}T-av\sqrt{\frac{q}{2}}}^\infty
        \exp{-\tau^2}\d\tau\leq
        \sqrt{\frac{q\pi}{2}}\exp{avT-\frac{T^2}{2q}}.
    \end{align*}
    And thus
    \begin{alignat*}{2}
        \norm{\tilde{H}_L-M_L}{\Lnorm}
        &\leq&&
        CJ^2\exp{-al}\times\\
        & &&\bigg(
        \exp{al/2}\sqrt{\frac{q}{2\pi}}
        +q\exp{al/2}\exp{-\frac{T^2}{2q}}
        +avq^{3/2}\sqrt{\frac{\pi}{2}}\exp{al/2}\exp{-\frac{T^2}{2q}}
        \bigg)\\
        &\leq&& CJ^2\exp{-\frac{a}{2}l}
        \max\set{
        \sqrt{\frac{q}{2\pi}},\, q,\, avq^{3/2}\sqrt{\frac{\pi}{2}}}
        \left(1+2\exp{-\frac{l^2}{8v^2q}}\right)\\
        &\leq&& C_1J^2\max\set{q^{1/2},\,q^{3/2}}\exp{-c_1l},
    \end{alignat*}
    with $C_1$ and $c_1$ defined in an obvious way as above.
    This completes the proof.
\end{proof}

We can conclude the existence of the first two operators that
we will need to approximate $\rho^0$.

\begin{lemma}\label{lemma:ol}
    Under Assumption \ref{ass:op}, there exist local, bounded and
    self-adjoint (projection) operators $O_L=O_L(l)$,
    $O_R=O_R(l)$ with the property
    \begin{align*}
        \norm{(O_L-\id)\uu_0}{\X}&\leq \exp{-c_1l/2},\\
        \norm{(O_R-\id)\uu_0}{\X}&\leq \exp{-c_1l/2}.
    \end{align*}
    The operators $O_L$ and $O_R$ have the same support as $M_L$
    and $M_R$, respectively, and $\norm{O_L}{\Lnorm}
    =\norm{O_R}{\Lnorm}=1$.
\end{lemma}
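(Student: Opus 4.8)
The plan is to use that, after combining Lemmas \ref{lemma:ann} and \ref{lemma:ml}, the \emph{local} self-adjoint operator $M_L=H_L+\diff_L$ approximately annihilates the ground state, and then to take $O_L$ to be the spectral projection of $M_L$ onto a fixed window around $0$. First I would estimate, using $\norm{\uu_0}{\X}=1$,
\[
\norm{M_L\uu_0}{\X}\leq\norm{M_L-\tilde{H}_L}{\Lnorm}+\norm{\tilde{H}_L\uu_0}{\X},
\]
where the first term is bounded by Lemma \ref{lemma:ml} and the second by Lemma \ref{lemma:ann}. The free parameter $q$ is at our disposal: setting $q=2c_1l/(\Delta E)^2$ makes $\exp{-\tfrac{1}{2}(\Delta E)^2q}=\exp{-c_1l}$, so both contributions scale like $l^{3/2}\exp{-c_1l}$ and hence $\norm{M_L\uu_0}{\X}\leq Cl^{3/2}\exp{-c_1l}$ for a constant $C=C(J,\Delta E)$.

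Next I would invoke the spectral theorem for $M_L$. The operator is self-adjoint: $H_L$ is self-adjoint by Assumption \ref{ass:op} \eqref{ass:self}, while $\diff_L$ is self-adjoint because $\i\comm{H}{H_L}$ is self-adjoint (the commutator of two self-adjoint operators is skew-adjoint), Heisenberg evolution and the truncation map $\Pi$ preserve self-adjointness, and $\diff_L$ is an integral against real Gaussian weights. Writing $P^{M_L}$ for its projection-valued measure and fixing some $\delta>0$, set $O_L:=P^{M_L}([-\delta,\delta])$. A Chebyshev-type estimate then gives
\[
\normsq{(\id-O_L)\uu_0}{\X}=\int_{|\mu|>\delta}\d\inp{\uu_0}{P^{M_L}(\mu)\uu_0}_{\X}\leq\delta^{-2}\int_{\R}\mu^2\d\inp{\uu_0}{P^{M_L}(\mu)\uu_0}_{\X}=\delta^{-2}\normsq{M_L\uu_0}{\X}.
\]

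Combining the two displays yields $\norm{(\id-O_L)\uu_0}{\X}\leq C\delta^{-1}l^{3/2}\exp{-c_1l}$; for $l$ large the polynomial prefactor is dominated by half the exponential, $C\delta^{-1}l^{3/2}\leq\exp{c_1l/2}$, which produces the claimed bound $\exp{-c_1l/2}$. The operator $O_L$ is by construction an orthogonal projection, hence self-adjoint, and it is nonzero (since $\norm{O_L\uu_0}{\X}^2=1-\normsq{(\id-O_L)\uu_0}{\X}>0$), so $\norm{O_L}{\Lnorm}=1$. Locality is inherited from $M_L$: writing $M_L=M_L'\otimes\id$ with $M_L'$ acting on $\X_{j-2l-2,j}$, functional calculus gives $O_L=P^{M_L'}([-\delta,\delta])\otimes\id$, so $O_L$ has the same support as $M_L$. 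The operator $O_R$ is built identically from $M_R$.

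\textbf{The main obstacle} is the unbounded-operator bookkeeping rather than any single estimate: one must verify that $M_L$ is (essentially) self-adjoint on a domain containing $\uu_0$, so that $\norm{M_L\uu_0}{\X}$ is finite and the spectral calculus is legitimate, and one must justify rigorously that the spectral projection of the tensor-factorized unbounded operator $M_L=M_L'\otimes\id$ indeed factors as $P^{M_L'}([-\delta,\delta])\otimes\id$ and so remains supported on $\X_{j-2l-2,j}$. A secondary point is ensuring the expected energies $\inp{\uu_0}{H_L\uu_0}_{\X}$ used implicitly in Lemma \ref{lemma:ann} are finite, which is what guarantees $\uu_0\in\dom{M_L}$ in the first place.
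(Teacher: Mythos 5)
Your proof is correct and is essentially the paper's own argument: bound $\norm{M_L\uu_0}{\X}$ by combining Lemmas \ref{lemma:ann} and \ref{lemma:ml} with the choice $q=2c_1l/(\Delta E)^2$, verify self-adjointness of $M_L$ through the self-adjointness of $\i\comm{H}{H_L}$ and the fact that the localization map $\Pi$ of \cite[Lemma 3.2]{NearComm} preserves it, then define $O_L$ as a spectral projection of $M_L$ onto a window around zero and conclude with a Chebyshev-type estimate on the spectral measure. Two small points are worth flagging. First, you take a \emph{fixed} window $[-\delta,\delta]$, which forces you to absorb the prefactor $C\delta^{-1}l^{3/2}$ into half the exponential and therefore gives the claimed bound only for $l$ sufficiently large; the paper instead shrinks the window radius to $C_1J^2\max\set{q^{1/2},\,q^{3/2}}\exp{-c_1l/2}$ (the geometric mean of the bound on $\norm{M_L\uu_0}{\X}$ and its non-exponential part), which yields $\norm{(O_L-\id)\uu_0}{\X}\leq\exp{-c_1l/2}$ for all $l$ and, more importantly, is exactly what Lemma \ref{lemma:tpq} uses downstream: there one needs $\norm{\left[\exp{\i(M_L+M_R)t}-\id\right]O_LO_R}{\Lnorm}\lesssim |t|\exp{-c_1l/2}$, i.e., that $O_L$, $O_R$ project onto spectral values of $M_L$, $M_R$ that are themselves exponentially small in $l$; with a fixed $\delta$ this quantity is only $\landau{|t|\delta}$, which does not decay in $l$, so the later argument would break. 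Second, your factorization $M_L=M_L'\otimes\id$ with $M_L'$ acting on $\X_{j-2l-2,j}$ is slightly off: $H_L$ acts on sites $1,\ldots,j-l-1$, so $M_L=H_L+\diff_L$ is supported on $\X_{1,j}$ (consistent with the support claimed in Theorem \ref{thm:obolor}); the factorization argument is unchanged once the correct factor space is used.
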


\begin{proof}
    Recall from Lemma \ref{lemma:ml} that we applied Lieb-Robinson to operators
    such as $\i\comm{H}{H_L}(\tau)$. By Assumption \ref{ass:op},
    since $H$ and $H_L$ are self-adjoint, the commutators are bounded and
    one has
    $\adj{\i\comm{H}{H_L}}=-\i\left(H_LH-HH_L\right)=\i\comm{H}{H_L}$
    $\Rightarrow$ we
    can conclude
    that the commutator is self-adjoint. Applying Lieb-Robinson as in
    \cite[Lemma 3.2]{NearComm}, one can construct the local approximation to
    preserve self-adjointness. We briefly elaborate.

    Let $\X=\X_1\otimes \X_2$.
    For the construction of $\Pi:\X\rightarrow\X$
    in \cite[Lemma 3.2]{NearComm}, the authors use an arbitrary state
    $\rho\in\State{\X_2}$, though the resulting bound does not depend on the
    choice of $\rho$. Then, for this state, applying the spectral decomposition,
    we have
    \begin{align*}
        \rho=\sum_{k=1}^\infty\lambda_k\inp{\cdot}{\psi_k}_{\X_2}\psi_k,
    \end{align*}
    for $\set{\psi_k:\;k\in\N}$ orthonormal in $\X_2$. Next, for each $\psi_k$,
    the authors define the map $A_k$ as
    \begin{align*}
        \inp{v}{A_kw}_{\X_1}=\inp{v\otimes\psi_k}{Aw\otimes\psi_k}_{\X},\quad
        \forall v,\,w\in\X_1.
    \end{align*}
    Finally, the map $\Pi(A)$ is defined as
    \begin{align*}
        \Pi(A):=\left(\sum_{k=1}^\infty\lambda_kA_k \right)\otimes\id.
    \end{align*}
    Note that each $A_k$ is self-adjoint if $A$ is self-adjoint. Therefore,
    $\Pi(A)$ is self-adjoint.

    Thus, $M_L$, $M_B$ and $M_R$ can be chosen
    self-adjoint.
    By Lemmas \ref{lemma:ml} and \ref{lemma:ann}, picking
    $q=c_1\frac{2l}{(\Delta E)^2}$, we get
    $\norm{M_L\uu_0}{\X}\leq C_1J^2\max\set{q^{1/2},\,q^{3/2}}\exp{-c_1l}$.
    Moreover, since $M_L$ is
    self-adjoint, there exists a projection valued measure $P(\cdot)$ such that
    \begin{align*}
        \normsq{M_L\uu_0}{\X}=\inp{M_L\uu_0}{M_L\uu_0}_{\X}=
        \inp{\uu_0}{M_L^2\uu_0}_{\X}=\int_{\sigma(M_L)}\lambda^2\d
        P_{\uu_0}(\lambda).
    \end{align*}

    We split the spectrum of $M_L$ as
    \begin{align*}
        \sigma_1(M_L)&:=\set{\lambda\in\sigma(M_L):|\lambda|\leq
        C_1J^2\max\set{q^{1/2},\,q^{3/2}}\exp{-c_1l/2}},\\
        \sigma_2(M_L)&:=\sigma(M_L)\setminus
        \sigma_1(M_L).
    \end{align*}
    Define $O_L$ as
    \begin{align*}
        O_L:=\int_{\sigma_1(M_L)}\d P(\lambda).
    \end{align*}
    Clearly, $O_L$ is a bounded self-adjoint operator with
    $\norm{O_L}{\Lnorm}=1$ and the same support as $M_L$.
    Moreover, by orthogonality of the spectral subspaces
    \begin{align*}
        C_1^2J^4\max\set{q,\,q^{3}}\exp{-c_12l}&\geq \norm{M_L\uu_0}{\X}
        =
        \int_{\sigma_1(M_L)}\lambda^2\d P_{\uu_0}(\lambda)\uu_0+
        \int_{\sigma_2(M_L)}\lambda^2\d P_{\uu_0}(\lambda)\uu_0\\
        &\geq C_1^2 J^4\max\set{q,\,q^{3}}
        \exp{-c_1l}\int_{\sigma_2(M_L)}\d P_{\uu_0}(\lambda).
    \end{align*}
    Thus, $\int_{\sigma_2(M_L)}\d P_{\uu_0}(\lambda)\leq\exp{-c_1l}$.
    Finally, this gives
    \begin{align*}
        \norm{(O_L-\id)\uu_0}{\X}&=
        \norm{\int_{\sigma_2(M_L)}\d P(\lambda)\uu_0}{\X}=
        \left(\int_{\sigma_2(M_L)}\d P_{\uu_0}(\lambda)\right)^{1/2}\\
        &\leq\exp{-c_1l/2}.
    \end{align*}
    Analogously for $O_R$. This completes the proof.
\end{proof}

What remains is a step by step approximation of $\rho^q$ as a product of three
local operators.

\begin{lemma}\label{lemma:tpq}
    Under Assumption \ref{ass:op}, we can further approximate $\rho^0$ as
    \begin{align*}
        \tilde{\rho}^q&:=\frac{1}{\sqrt{2\pi q}}\int_{-\infty}^\infty
        \adj{\oe{A}{0}{t}}\exp{-\frac{t^2}{2q}}O_LO_R\d t,\\
        A(t)&:=\exp{\i (M_L+M_R)t}\i M_B\exp{-\i (M_L+M_R)t},
    \end{align*}
    where $\adj{\oe{A}{0}{t}}$ is the negative time-ordered
    exponential
    and $q=c_1\frac{2l}{(\Delta E)^2}$. We have
    \begin{align*}
        \norm{\tilde{\rho}^q-\rho^0}{\Lnorm}\leq
        C_2J^2\max\set{q,\,q^2}\left(
        \exp{-c_1l/2}+\exp{-c_1l}\right),
    \end{align*}
    for some constant $C_2>0$.
\end{lemma}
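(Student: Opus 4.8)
The plan is to pass from $\rho^0$ to $\tilde{\rho}^q$ through the intermediate operator $\rho^q$ of Lemma \ref{lemma:pq}, so that the first leg $\norm{\rho^q-\rho^0}{\Lnorm}\leq\exp{-\frac12(\Delta E)^2q}=\exp{-c_1l}$ is already paid for by the choice $q=c_1\frac{2l}{(\Delta E)^2}$, leaving only a comparison of $\tilde{\rho}^q$ with $\rho^q$. The object $\adj{\oe{A}{0}{t}}$ is, up to the time-ordering convention, the interaction-picture factor that disentangles the bulk generator $M_B$ from the boundary generator $M_L+M_R$: differentiating in $t$ shows $\exp{\i Mt}=\exp{\i(M_L+M_R)t}\,\adj{\oe{A}{0}{t}}$ for $M:=M_L+M_B+M_R$. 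The first thing I would record is that each of $M_L,M_B,M_R$ approximately annihilates $\uu_0$: combining Lemmas \ref{lemma:ann} and \ref{lemma:ml} exactly as in the proof of Lemma \ref{lemma:ol} gives $\norm{M_\bullet\uu_0}{\X}\leq C_1J^2\max\set{q^{1/2},q^{3/2}}\exp{-c_1l}$ for $\bullet\in\set{L,B,R}$.

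Next I would exploit that the left and right blocks have disjoint supports, so that $\comm{M_L}{M_R}=0$, $\comm{O_L}{O_R}=0$, and all cross commutators vanish; hence $\exp{-\i(M_L+M_R)t}=\exp{-\i M_Lt}\exp{-\i M_Rt}$ commutes through $O_LO_R$. A Duhamel estimate gives $\norm{(\exp{\pm\i M_\bullet t}-\id)\uu_0}{\X}\leq|t|\,\norm{M_\bullet\uu_0}{\X}$, while Lemma \ref{lemma:ol} supplies $\norm{(O_\bullet-\id)\uu_0}{\X}\leq\exp{-c_1l/2}$. Assembling these along $\adj{\oe{A}{0}{t}}O_LO_R\uu_0=\exp{\i Mt}\exp{-\i M_Lt}\exp{-\i M_Rt}O_LO_R\uu_0$ and telescoping one factor at a time yields $\norm{\adj{\oe{A}{0}{t}}O_LO_R\uu_0-\uu_0}{\X}\lesssim\exp{-c_1l/2}+|t|\,J^2\max\set{q^{1/2},q^{3/2}}\exp{-c_1l}$, and the analogous bound holds for the integrand of $\adj{\tilde{\rho}^q}$ acting on $\uu_0$. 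I would then integrate against the Gaussian, using $\frac{1}{\sqrt{2\pi q}}\int_{-\infty}^\infty|t|\exp{-\frac{t^2}{2q}}\d t=\sqrt{2q/\pi}$ to convert $|t|$ into a factor $q^{1/2}$, which multiplied by $\max\set{q^{1/2},q^{3/2}}$ produces the announced prefactor $\max\set{q,q^2}$.

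To upgrade these action-on-$\uu_0$ estimates to the operator-norm statement I would split $\tilde{\rho}^q$ into the four blocks determined by $\rho^0$ and $\id-\rho^0$. The block $\rho^0\tilde{\rho}^q\rho^0=\inp{\uu_0}{\tilde{\rho}^q\uu_0}_{\X}\,\rho^0$ equals $\rho^0$ up to the scalar error above, and the two mixed rank-one blocks $\rho^0\tilde{\rho}^q(\id-\rho^0)$ and $(\id-\rho^0)\tilde{\rho}^q\rho^0$ are controlled by $\norm{\adj{\tilde{\rho}^q}\uu_0-\uu_0}{\X}$ and $\norm{\tilde{\rho}^q\uu_0-\uu_0}{\X}$, hence by the same bound. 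The main obstacle is therefore the remaining block $(\id-\rho^0)\tilde{\rho}^q(\id-\rho^0)$, i.e.\ the behaviour of $\tilde{\rho}^q$ on $\uu_0^\perp$, about which the $\uu_0$-estimates say nothing.

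This is precisely where the spectral gap must re-enter. Just as $\rho^q$ is close to the rank-one $\rho^0$ only because the gap makes the Gaussian average $\frac{1}{\sqrt{2\pi q}}\int_{-\infty}^\infty\exp{\i\lambda t}\exp{-\frac{t^2}{2q}}\d t=\exp{-\frac12\lambda^2q}$ suppress every $\lambda$ with $|\lambda|\geq\Delta E$, one must show the Gaussian average of $\adj{\oe{A}{0}{t}}O_LO_R$ inherits the same suppression on $\uu_0^\perp$. A naive integrand-wise comparison of $\adj{\oe{A}{0}{t}}O_LO_R$ with $\exp{\i Ht}$ fails, since $M$ differs from $H$ by the bounded but not small corrections $\diff_L+\diff_B+\diff_R$, so the two unitaries are $O(1)$ apart already for $|t|\sim q^{1/2}$. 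The resolution I would pursue is to transfer the gap suppression from $\rho^q$ to $\tilde{\rho}^q$ by carrying out the replacement of $H$ by $M$ and the insertion of $O_LO_R$ one local factor at a time, using the Lieb--Robinson localization of Lemma \ref{lemma:ml} so that each replacement error is measured against states already localized by a projector, and verifying that the accumulated error integrates to $C_2J^2\max\set{q,q^2}(\exp{-c_1l/2}+\exp{-c_1l})$. Checking that this bookkeeping indeed reproduces the $\max\set{q,q^2}$ prefactor on both the $\exp{-c_1l/2}$ and the $\exp{-c_1l}$ terms is the crux of the argument.
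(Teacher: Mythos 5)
Your reduction to the behaviour of $\tilde{\rho}^q$ on $\uu_0^\perp$ is exactly where the proposal breaks down, and the reason is a concrete misreading of what $M_L+M_B+M_R$ is. You dismiss the direct comparison of $\exp{\i(M_L+M_B+M_R)t}$ with $\exp{\i Ht}$ on the grounds that ``$M$ differs from $H$ by the bounded but not small corrections $\diff_L+\diff_B+\diff_R$''. That premise is false, and the identity it overlooks is the first line of the paper's proof: since $H_L+H_B+H_R=H$ commutes with its own evolution and the Gaussian has unit mass,
\begin{align*}
    \tilde{H}_L+\tilde{H}_B+\tilde{H}_R
    =\frac{1}{\sqrt{2\pi q}}\int_{-\infty}^\infty\exp{\i Ht}\left(H_L+H_B+H_R\right)\exp{-\i Ht}\exp{-\frac{t^2}{2q}}\d t
    =H,
\end{align*}
so that by Lemma \ref{lemma:ml}
\begin{align*}
    \norm{H-(M_L+M_B+M_R)}{\Lnorm}
    \leq\norm{\tilde{H}_L-M_L}{\Lnorm}+\norm{\tilde{H}_B-M_B}{\Lnorm}+\norm{\tilde{H}_R-M_R}{\Lnorm}
    \leq 3C_1J^2\max\set{q^{1/2},\,q^{3/2}}\exp{-c_1l}.
\end{align*}
The operators $\diff_L,\diff_B,\diff_R$ are indeed not small, but they are precisely the corrections that move each $H_\bullet$ onto its Gaussian time-average $\tilde{H}_\bullet$, and the three errors sum against $H$ itself. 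Consequently the ``naive'' integrand-wise comparison you reject is what the paper actually does: $\norm{\exp{\i Ht}-\exp{\i(M_L+M_B+M_R)t}}{\Lnorm}\leq|t|\,\norm{H-(M_L+M_B+M_R)}{\Lnorm}$, and integrating $|t|$ against the Gaussian yields an \emph{operator-norm} bound $3(2\pi)^{-1/2}C_1J^2\max\set{q,\,q^2}\exp{-c_1l}$ between $\rho^q$ and the Gaussian average of $\exp{\i(M_L+M_B+M_R)t}$. From there the factorization $\exp{\i(M_L+M_B+M_R)t}=\adj{\oe{A}{0}{t}}\exp{\i(M_L+M_R)t}$ is exact (interaction picture), inserting $O_LO_R$ costs $4\exp{-c_1l/2}$ by Lemma \ref{lemma:ol} and self-adjointness, and replacing $\exp{\i(M_L+M_R)t}O_LO_R$ by $O_LO_R$ costs $2|t|C_1J^2\max\set{q^{1/2},\,q^{3/2}}\exp{-c_1l/2}$ because $O_L$, $O_R$ project onto spectral subspaces of $M_L$, $M_R$ with eigenvalues of exactly that size; a second Gaussian integration gives the other term in the claimed bound. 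Every step is an operator-norm perturbation, so the block $(\id-\rho^0)\tilde{\rho}^q(\id-\rho^0)$ never requires separate treatment: the spectral gap enters once only, through Lemma \ref{lemma:pq}.

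Because you missed this identity, your write-up stops at the hard part. The action-on-$\uu_0$ estimates and the block decomposition you set up control only three of the four blocks; for the fourth you offer a strategy (``transfer the gap suppression one local factor at a time'') that you yourself flag as unverified, and whose starting point is the incorrect $O(1)$ estimate of $\norm{H-(M_L+M_B+M_R)}{\Lnorm}$. That fourth block is the entire content of the lemma --- without it there is no bound on $\norm{\tilde{\rho}^q-\rho^0}{\Lnorm}$ --- so the proposal, as it stands, is a genuinely incomplete proof rather than an alternative one.
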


\begin{proof}
    Note that $H=\tilde{H}_L+\tilde{H}_B+
    \tilde{H}_R$. By utilizing the estimates from Lemma \ref{lemma:ml}, we have
    \begin{align}\label{eq:exp}
        &\norm{\frac{1}{\sqrt{2\pi q}}\int_{-\infty}^\infty
        \big\{\exp{\i(\tilde{H}_L+\tilde{H}_B+\tilde{H}_R
        )t}\notag
        -\exp{\i(M_L+M_B+M_R)t}\big\}\exp{-\frac{t^2}{2q}}\d t}{\Lnorm}\notag\\
        &\leq 3C_1J^2\max\set{q^{1/2},\, q^{3/2}}\exp{-c_1l}
        \frac{1}{\sqrt{2\pi q}}\int_{-\infty}^\infty |t|
        \exp{-\frac{t^2}{2q}}\d t\\
        &=3(2\pi)^{-1/2}C_1J^2
        \max\set{q,\, q^{2}}\exp{-c_1l}\notag,
    \end{align}
    where the inequality can be shown using
    \cite[Chapter 9, Theorem 2.12, Equation (2.22)]{Kato}.

    Next, for the exponential term we can write
    \begin{align*}
        &\exp{\i(M_L+M_B+M_R)t}
        =\exp{\i(M_L+M_B+M_R)t}\exp{-\i(M_L+M_R)t}
        \exp{\i(M_L+M_R)t},
    \end{align*}
    and we define $U(t):=\exp{\i(M_L+M_B+M_R)t}\exp{-\i(M_L+M_R)t}$.
    In $U(t)$ the term in the exponent commutes for different $t$, since
    the time-dependence is a simple multiplication by $t$. We thus
    compute
    \begin{align*}
        &\frac{\d}{\d t}U(t)
        =\exp{\i(M_L+M_B+M_R)t}\i(M_L+M_B+M_R)
        \exp{-\i(M_L+M_R)t}\\
        &-
        \exp{\i(M_L+M_B+M_R)t}\i(M_L+M_R)
        \exp{-\i(M_L+M_R)t}\\
        &=\exp{\i(M_L+M_B+M_R)t}\i M_B\exp{-\i(M_L+M_R)t}\\
        &=\exp{\i(M_L+M_B+M_R)t}\exp{-\i(M_L+M_R)t}
        \exp{\i(M_L+M_R)t}\i M_B\exp{-\i(M_L+M_R)t}\\
        &=U(t)\exp{\i(M_L+M_R)t}\i M_B\exp{-\i(M_L+M_R)t}
    \end{align*}
    and $U(0)=\id$. We abbreviate
    \begin{align*}
        M_B(t):=\exp{\i(M_L+M_R)t}M_B\exp{-\i(M_L+M_R)t}.
    \end{align*}
    Due to the simple form of $\i M_B(t)$, the solution to this initial
    value problem exists, is unique and is
    given by the (negative) time-ordered exponential of
    $\i M_B(t)$, see \cite[Chapter X.12]{MathPhys2}
    (interaction representation) or \cite{Unbounded} for
    ordered exponentials of more general unbounded time dependent
    Hamiltonians.

    Thus, our approximation so far is
    \begin{align*}
        \bar{\rho}^q=\frac{1}{\sqrt{2\pi q}}\int_{-\infty}^\infty
        \adj{\oe{A}{0}{t}}\exp{\i(M_L+M_R)t}\exp{-\frac{t^2}{2q}}\d t.
    \end{align*}
    By multiplying $O_L$, $O_R$ from the right we obtain
    \begin{align*}
        &\norm{\bar{\rho}^qO_LO_R-\rho^0}{\Lnorm}
        =\norm{(\bar{\rho}^q-\rho^0+\rho^0)O_LO_R-\rho^0}{\Lnorm}
        =\norm{(\bar{\rho}^q-\rho^0)O_LO_R+\rho^0O_LO_R-\rho^0}{\Lnorm}
        \\
        &\leq
        \norm{\bar{\rho}^q-\rho^0}{\Lnorm}+
        \norm{\rho^0O_LO_R-\rho^0}{\Lnorm}.
    \end{align*}
    For the latter term we use Lemma \ref{lemma:ol}, set
    $q=c_1\frac{2l}{(\Delta E)^2}$ and obtain
    \begin{align*}
        &\norm{\rho^0O_LO_R-\rho^0}{\Lnorm}=
        \norm{\rho^0
        \left\{(O_L-\id+\id)+(O_R-\id+\id)-\id\right\}
        }{\Lnorm}\\
        &=\norm{\rho^0(O_L-\id)(O_R-\id)+\rho^0(O_L-\id)+
        \rho^0(O_R-\id)
        }{\Lnorm}\\
        &\leq
        3\norm{(O_L-\id)\rho^0}{\Lnorm}+
        \norm{(O_R-\id)\rho^0}{\Lnorm}
        \leq 4\exp{-c_1l/2}.
    \end{align*}
    since
    $\norm{\rho^0}{\Lnorm}=
    \norm{O_L}{\Lnorm}=1$ and $\rho^0$, $O_L$ and $O_R$ are self-adjoint.
    Thus, overall
    \begin{align*}
        \norm{\bar{\rho}^qO_LO_R-\rho^0}{\Lnorm}&\leq
        3(2\pi)^{-1/2}C_1J^2\max\set{q,\, q^2}\exp{-c_1l}
        +4\exp{-c_1l/2}.
    \end{align*}

    Finally, by definition, $O_L$ and $O_R$ project onto the spectral subspaces
    of $M_L$ and $M_R$ corresponding to small eigenvalues. I.e.,
    \begin{align*}
        \norm{\left[\exp{\i(M_L+M_R)t}-\id\right]O_LO_R
        }{\Lnorm}\leq 2|t|C_1J^2\max\set{q^{1/2},\,q^{3/2}}
        \exp{-c_1l/2},
    \end{align*}

    hence
    \begin{align*}
        \norm{\bar{\rho}^qO_LO_R-\tilde{\rho}^q}{\Lnorm}
        &\leq\bigg\|\frac{1}{\sqrt{2\pi q}}\int_{-\infty}^\infty
        \adj{\oe{A}{0}{t}}\times\\
        &\exp{-\frac{t^2}{2q}}\left(\exp{\i(M_L+M_R)t}O_LO_R-O_LO_R
        \right)\d t\bigg\|_{\Lnorm}\\
        &\leq 2C_1J^2\max\set{q^{1/2},\, q^{3/2}}\exp{-c_1l/2}
        \frac{1}{\sqrt{2\pi q}}\int_{-\infty}^\infty |t|\exp{-\frac{t^2}{2q}}\d t\\
        &=
        C_1J^2(2\pi)^{-1/2}\max\set{q,\,q^2}\exp{-c_1l/2}.
    \end{align*}

    Overall we obtain
    \begin{align*}
        \norm{\tilde{\rho}^q-\rho^0}{\Lnorm}&\leq
        \norm{\bar{\rho}^qO_LO_R-\rho^0}{\Lnorm}+
        \norm{\bar{\rho}^qO_LO_R-\tilde{\rho}^q}{\Lnorm}\\
        &\leq C_2J^2\max\set{q,\, q^2}\left(
        \exp{-c_1l/2}+\exp{-c_1l}\right),
    \end{align*}
    for an appropriately chosen constant $C_2>0$. This completes the proof.
\end{proof}

It remains to show how we can obtain a local operator $O_B$, maintaining the
same approximation order. This follows once more from a Lieb-Robinson bound.

\begin{lemma}\label{lemma:ob}
    Consider the operator
    \begin{align*}
        \OB:=\frac{1}{\sqrt{2\pi q}}\int_{-\infty}^\infty
        \adj{\oe{A}{0}{t}}\exp{-\frac{t^2}{2q}}\d t,
    \end{align*}
    with $A(t)$ as above
    \begin{align*}
        A(t):=\exp{\i(M_L+M_R)t}\i M_B\exp{-\i(M_L+M_R)t}.
    \end{align*}
    Then, there exists a local bounded operator $O_B$
    supported on $\X_{j-3l-2,j+3l+3}$, with $\norm{O_B}{\Lnorm}\leq 1$
    such that
    \begin{align*}
        \norm{\OB-O_B}{\Lnorm}\leq
        C_3J^2\max\set{q^{1/2},\, q}\exp{-c_3l},
    \end{align*}
    for some constants $C_3>0$, $c_3>0$.
\end{lemma}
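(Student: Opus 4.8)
The plan is to localize the propagator $\OB$ by the same Lieb--Robinson machinery already used in Lemma~\ref{lemma:ml}, the one new feature being that we must now localize a time-ordered exponential rather than a plain Gaussian integral. Write $U(t):=\adj{\oe{A}{0}{t}}$, so that $U(t)$ is the unitary propagator solving $\frac{\d}{\d t}U(t)=U(t)\i M_B(t)$ with $U(0)=\id$ and $\OB=\frac{1}{\sqrt{2\pi q}}\int_{-\infty}^\infty U(t)\exp{-\frac{t^2}{2q}}\d t$, where $M_B(\tau)=\exp{\i K\tau}M_B\exp{-\i K\tau}$ is the Heisenberg evolution of $M_B$ under $K:=M_L+M_R$. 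Since the interaction part of $K$ is bounded, the Lieb--Robinson bound \cite[Corollary~2.2]{LBInfDim} governs the flow it generates, with some velocity $v$ and decay rate $a>0$.

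First I would localize the generator. Differentiating gives $\frac{\d}{\d\tau}M_B(\tau)=\i\comm{K}{M_B}(\tau)$, and --- exactly as in Lemmas~\ref{lemma:ann} and~\ref{lemma:ml} --- the commutator $\comm{K}{M_B}$ collapses, through the nearest-neighbor structure together with Assumptions \eqref{ass:int} and~\eqref{ass:comm}, to a \emph{bounded} local operator of norm $\lesssim J^2$ supported on $\X_{j-2l-2,j+2l+3}$. Hence $M_B(\tau)=M_B+\int_0^\tau\i\comm{K}{M_B}(s)\d s$ with a bounded correction. Applying the localization map $\Pi$ of \cite[Lemma~3.2]{NearComm} to the Lieb--Robinson-evolved commutator $\comm{K}{M_B}(s)$, and localizing to the \emph{fixed} window $\X_{j-3l-2,j+3l+3}$, I set
\begin{align*}
    A_{\mathrm{loc}}(\tau):=\i\left(M_B+\int_0^\tau\i\,\Pi\left(\comm{K}{M_B}(s)\right)\d s\right),\qquad
    W_{\mathrm{loc}}(t):=\adj{\oe{A_{\mathrm{loc}}}{0}{t}}.
\end{align*}
Because $\Pi$ preserves self-adjointness (Lemma~\ref{lemma:ol}) and every summand of $A_{\mathrm{loc}}(\tau)$ is supported on $\X_{j-3l-2,j+3l+3}$, the propagator $W_{\mathrm{loc}}(t)$ is unitary and supported on that same window for every $t$.

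The decisive step is a Duhamel comparison $\norm{U(t)-W_{\mathrm{loc}}(t)}{\Lnorm}\leq\int_0^{|t|}\norm{A(\tau)-A_{\mathrm{loc}}(\tau)}{\Lnorm}\d\tau$, in which the unbounded part $\i M_B$ is \emph{common to both generators and therefore cancels}, leaving only the bounded difference $\int_0^\tau\big(\comm{K}{M_B}(s)-\Pi(\comm{K}{M_B}(s))\big)\d s$; this cancellation is exactly what lets the argument survive the unboundedness of $M_B$, and making it rigorous is the point I expect to be most delicate. By \cite[Lemma~3.2]{NearComm} the integrand is bounded by $\lesssim J^2\exp{-a\{l-v|s|\}}$, and \cite[Theorem~3.4]{NearComm} lets me carry this through the time integration. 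I then define $O_B:=\frac{1}{\sqrt{2\pi q}}\int_{-\infty}^\infty W_{\mathrm{loc}}(t)\exp{-\frac{t^2}{2q}}\d t$, which is supported on $\X_{j-3l-2,j+3l+3}$ and satisfies $\norm{O_B}{\Lnorm}\leq1$, since each $W_{\mathrm{loc}}(t)$ is unitary and the Gaussian is a probability density.

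Finally, with $T:=\frac{l}{2v}$ and $q=c_1\frac{2l}{(\Delta E)^2}$ as in the earlier lemmas, I split $\norm{\OB-O_B}{\Lnorm}\leq\frac{1}{\sqrt{2\pi q}}\int_{-\infty}^\infty\norm{U(t)-W_{\mathrm{loc}}(t)}{\Lnorm}\exp{-\frac{t^2}{2q}}\d t$ at $|t|=T$. On $|t|\leq T$ the two nested time integrals produce $|t|$-moments, so the Lieb--Robinson factor $\exp{-al}\exp{av|t|}\leq\exp{-al/2}$ combines with $\frac{1}{\sqrt{2\pi q}}\int|t|^k\exp{-\frac{t^2}{2q}}\d t\sim q^{k/2}$ to yield the polynomial factor $\max\set{q^{1/2},\,q}$; on $|t|>T$ the crude bound $\norm{U(t)-W_{\mathrm{loc}}(t)}{\Lnorm}\leq2$ against the Gaussian tail contributes $\exp{-\frac{T^2}{2q}}=\exp{-c'l}$ of the same exponential order. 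Collecting the two regimes gives $\norm{\OB-O_B}{\Lnorm}\leq C_3J^2\max\set{q^{1/2},\,q}\exp{-c_3l}$ for suitable $C_3,c_3>0$, as claimed.
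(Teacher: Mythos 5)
Your proposal is correct and takes essentially the same route as the paper: both localize the generator by writing $M_B(t)=M_B+\int_0^t\i\comm{M_L+M_R}{M_B}(s)\d s$, apply the localization map of \cite[Lemma 3.2]{NearComm} to the evolved commutator under the Lieb--Robinson bound, define $O_B$ as the Gaussian average of the time-ordered exponential of the localized generator, and split the Gaussian integral at $T\sim l/v$. The only differences are technical conveniences: you compare the two propagators via a Duhamel formula (exploiting that the common unbounded part $\i M_B$ cancels in the difference of generators), where the paper instead telescopes a product-integral (Trotter-type) discretization with induction, and you dispatch the tail $|t|>T$ with the crude bound $\norm{U(t)-W_{\mathrm{loc}}(t)}{\Lnorm}\leq 2$ against the Gaussian rather than integrating the growing bound explicitly --- both variants deliver the same estimate $C_3J^2\max\set{q^{1/2},\,q}\exp{-c_3l}$.
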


\begin{proof}
    We use the same trick as in Lemma \ref{lemma:ml} to show that the non-local
    part of $M_B(t)$ is bounded.
    \begin{align*}
        M_B(t)&:=\exp{\i(M_L+M_R)t}M_B\exp{\i(M_L+M_R)t},\\
        \frac{\d}{\d t}M_B(t)&=\exp{\i(M_L+M_R)t}\comm{M_L+M_R}{M_B}
        \exp{-\i(M_L+M_R)t},\\
        \comm{M_L+M_R}{M_B}&=\comm{H_L}{H_B}+\comm{H_L}{\diff_B}+
        \comm{\diff_L}{H_B}+\comm{\diff_L}{\diff_B}+
        \comm{H_R}{H_B}\\
        &+\comm{H_R}{\diff_B}+\comm{\diff_R}{H_B}
        +\comm{\diff_R}{\diff_B}.
    \end{align*}
    Since $\diff_L$ and $\diff_R$ are supported on a superset of the supports
    of $\comm{H}{H_L}$ and $\comm{H}{H_R}$, we obtain
    \begin{align*}
        \supp{\comm{M_L+M_R}{M_B}}
        \subset\supp{\diff_L}\cup\supp{\diff_R}=\supp{M_B}.
    \end{align*}
    Thus, as in Lemma \ref{lemma:ml},
    we can approximate $M_B(t)$ by
    a local operator $\tilde{M}_B(t)$ supported on $\X_{j-3l-2,j+3l+3}$
    such that
    \begin{align*}
        \norm{M_B(t)-\tilde{M}_B(t)}{\Lnorm}\leq
        CJ^2|t|\exp{-a\{l-v|t|\}}.
    \end{align*}
    We utilize this estimate in a similar way as in \eqref{eq:exp}. To this end,
    we write the time-ordered exponential as a product integral
    and use a
    step function approximation to the integral
    (see \cite[Chapter 3.6]{ProductInt}), namely
    \begin{align*}
        \adj{\oe{A}{0}{t}}=\prod_{0}^t\exp{A(\tau)}\d\tau=
        \lim_{N\rightarrow\infty}\left(
        \exp{A(t_N)\Delta t}\cdots\exp{A(t_0)\Delta t}\right),
    \end{align*}
    where $t_i=i\Delta t$, $\Delta t=t/N$ and the convergence is meant in
    the strong sense. This is possible due to the simple form of $A(t)$,
    i.e., $\exp{A(t)}$ is bounded with norm 1.

    For $N=1$, we obtain as in \eqref{eq:exp}
    \begin{align*}
        \norm{\exp{\i M_B(t)\Delta t}-\exp{\i\tilde{M}_B(t)\Delta t}
        }{\Lnorm}\leq |\Delta t|\norm{M_B(t)-\tilde{M}_B(t)
        }{\Lnorm},
    \end{align*}
    with $|\Delta t|=|t|$. For $N-1\rightarrow N$, by induction
    \begin{align*}
        &\norm{\exp{\i M_B(t_N)\Delta t}\cdots\exp{\i M_B(t_0)\Delta t}-
        \exp{\i\tilde{M}_B(t_N)\Delta t}\cdots\exp{\i\tilde{M}_B(t_0)\Delta t}
        }{\Lnorm}\\
        &\leq
        \norm{\left[\exp{\i M_B(t_N)\Delta t}-
        \exp{\i\tilde{M}_B(t_N)\Delta t}\right]
        \exp{\i M_Bt_{N-1}\Delta t}
        \cdots\exp{\i M_B(t_0)\Delta t}}{\Lnorm}\\
        &+
        \bigg\|\exp{\i\tilde{M}_B(t_N)\Delta t}
        \times\\
        &\left[\exp{\i M_B(t_{N-1})\Delta t}\cdots\exp{\i M_B(t_0)\Delta t}-
        \exp{\i\tilde{M}_B(t_{N-1})
        \Delta t}\cdots\exp{\i\tilde{M}_B(t_0)\Delta t}\right]\bigg\|_{\Lnorm}\\
        &\leq|\Delta t|N \norm{M_B(t)-\tilde{M}_B(t)}{\Lnorm},
    \end{align*}
    where by definition $|\Delta t|N=|t|$. Thus, we can estimate for the ordered
    exponential
    \begin{align*}
        &\norm{\adj{\oe{\i M_B}{0}{t}}-\adj{\oe{\i\tilde{M}_B}{0}{t}}}{\Lnorm}\\
        &\leq |t|\norm{M_B(t)-\tilde{M}_B(t)}{\Lnorm}
        \leq CJ^2t^2\exp{-a\{l/3-v|t|\}}.
    \end{align*}
    We define the local operator $O_B$ as
    \begin{align*}
        O_B:=\frac{1}{\sqrt{2\pi q}}\int_{-\infty}^\infty
        \adj{\oe{\i\tilde{M}_B}{0}{t}}\exp{-\frac{t^2}{2q}}\d t.
    \end{align*}
    Estimating as in \eqref{eq:trunc} for $T=\frac{l}{6v}$, we obtain
    \begin{align*}
        &\norm{\OB-O_B}{\Lnorm}\leq
        \norm{\int_{|t|\leq T\ldots}+\int_{|t|>T}\ldots}{\Lnorm}
        \leq
        CJ^2\exp{-al/3}\\
        &\times\left(\exp{avT}\frac{1}{\sqrt{2\pi q}}\int_{|t|\leq T}
        t^2\exp{-\frac{t^2}{2q}}\d t+
        \frac{1}{\sqrt{2\pi q}}\int_{|t|>T}t^2\exp{av|t|-\frac{t^2}{2q}}\d t
        \right).
    \end{align*}
    In analogy to \eqref{eq:trunc}, for the first term we obtain
    \begin{align*}
        \frac{1}{\sqrt{2\pi q}}\int_{|t|\leq T}
        t^2\exp{-\frac{t^2}{2q}}\d t\leq \frac{q}{2}.
    \end{align*}
    For the second term, applying integration by parts, we obtain
    \begin{align*}
        &\frac{1}{\sqrt{2\pi q}}\int_{|t|>T}t^2\exp{av|t|-\frac{t^2}{2q}}\d t=
        \sqrt{\frac{2}{\pi q}}\int_{T}^\infty
        t^2\exp{avt-\frac{t^2}{2q}}\d t\\
        &=\sqrt{\frac{2}{\pi q}}\bigg(
        Tq\exp{avT-\frac{T^2}{2q}}+q\int_{T}^\infty \exp{avt-\frac{t^2}{2q}}
        dt
        +avq\int_{T}^\infty t\exp{avt-\frac{t^2}{2q}}\d t\bigg).
    \end{align*}
    And hence
    \begin{align*}
        &\frac{1}{\sqrt{2\pi q}}\int_{|t|>T}t^2\exp{av|t|-\frac{t^2}{2q}}\d t
        \leq\exp{avt-\frac{T^2}{2q}}\frac{\sqrt{2}}{\sqrt{\pi}|1-avq|}
        \sqrt{q}\left(\sqrt{\frac{q\pi}{2}+T}\right).
    \end{align*}
    The final estimate is thus
    \begin{align*}
        \norm{\OB-O_B}{\Lnorm}\leq
        C_3J^2\max\set{q^{1/2},\, q}\exp{-3c_3l},
    \end{align*}
    for appropriate constants $C_3>0$, $c_3>0$. This completes the proof.
\end{proof}

We are now ready to state the main result of this subsection.
\begin{theorem}\label{thm:obolor}
    Under Assumption \ref{ass:op}, there exist local, bounded and
    self-adjoint operators $O_L=O_L(j,\,l)$,
    $O_B=O_B(j,\,l)$,
    $O_R=O_R(j,\,l)$ with norms bounded by 1,
    such that for some constants $C_4>0$, $c_4>0$
    \begin{align}\label{eq:finallemma}
        \norm{O_BO_LO_R-\rho^0}{\Lnorm}\leq
        C_4J^2\exp{-c_4l}.
    \end{align}
    The respective supports are $\X_{1,j}$,
    $\X_{j-3l-2,j+3l+3}$ and $\X_{j+1,d}$. The operator $O_B$ can be chosen
    w.l.o.g.\ to be positive.
\end{theorem}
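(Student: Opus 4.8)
The plan is to assemble Theorem \ref{thm:obolor} from the chain of estimates established in Lemmas \ref{lemma:ol}, \ref{lemma:tpq} and \ref{lemma:ob}; essentially all of the analytic work is already contained in those lemmas, so what remains is a triangle-inequality estimate, a bookkeeping of supports and constants, and a short argument upgrading the middle factor to a positive operator. First I would recall that Lemma \ref{lemma:tpq} produces $\tilde{\rho}^q=\OB O_LO_R$ with $O_L$, $O_R$ the local self-adjoint norm-one projections of Lemma \ref{lemma:ol} (supported on subsets of $\X_{1,j}$ and $\X_{j+1,d}$), and that Lemma \ref{lemma:ob} supplies a local operator $O_B$ supported on $\X_{j-3l-2,j+3l+3}$ with $\norm{O_B}{\Lnorm}\le 1$. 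I would then write
\begin{align*}
    \norm{O_BO_LO_R-\rho^0}{\Lnorm}\leq\norm{(O_B-\OB)O_LO_R}{\Lnorm}
    +\norm{\OB O_LO_R-\rho^0}{\Lnorm},
\end{align*}
bound the first term by $\norm{O_B-\OB}{\Lnorm}\norm{O_L}{\Lnorm}\norm{O_R}{\Lnorm}=\norm{O_B-\OB}{\Lnorm}$ via Lemma \ref{lemma:ob}, and observe that the second term equals $\norm{\tilde{\rho}^q-\rho^0}{\Lnorm}$, which is controlled by Lemma \ref{lemma:tpq}.

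Next I would insert the choice $q=c_1\frac{2l}{(\Delta E)^2}$ used throughout, so that $q$ is linear in $l$ and the prefactors $\max\set{q^{1/2},\,q}$ and $\max\set{q,\,q^2}$ become polynomial in $l$. Since any polynomial is dominated by an arbitrarily small exponential, i.e.\ $\mathrm{poly}(l)\exp{-cl}\leq C\exp{-c'l}$ for any $0<c'<c$, the two contributions collapse into a single bound $C_4J^2\exp{-c_4l}$ with $c_4<\min\set{c_1/2,\,c_3}$. The supports are read off directly: $O_L$ and $O_R$ inherit the supports from Lemma \ref{lemma:ol}, which lie in $\X_{1,j}$ and $\X_{j+1,d}$, and $O_B$ is supported on $\X_{j-3l-2,j+3l+3}$ by Lemma \ref{lemma:ob}; self-adjointness of $O_L$, $O_R$ is part of Lemma \ref{lemma:ol}.

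Finally, for positivity, I would replace $O_B$ by $\hat{O}_B:=\adj{O_B}O_B$, which is positive, self-adjoint, satisfies $\norm{\hat{O}_B}{\Lnorm}=\norm{O_B}{\Lnorm}^2\leq 1$, and is supported on the same middle block. The point is that the estimate survives this substitution. Writing $O_BO_LO_R=\rho^0+E$ with $\norm{E}{\Lnorm}\leq C_4J^2\exp{-c_4l}$ and using $\rho^0\uu_0=\uu_0$ together with $\norm{(O_LO_R-\id)\uu_0}{\X}=\landau{\exp{-c_1l/2}}$ (from Lemma \ref{lemma:ol}) gives $\norm{O_B\uu_0-\uu_0}{\X}=\landau{\exp{-c_4l}}$; passing to the adjoint via $\inp{\uu_0}{\adj{O_B}\uu_0}_{\X}=\overline{\inp{\uu_0}{O_B\uu_0}_{\X}}=1+\landau{\exp{-c_4l}}$ and $\norm{\adj{O_B}}{\Lnorm}\le 1$ yields $\norm{\adj{O_B}\uu_0-\uu_0}{\X}=\landau{\exp{-c_4l/2}}$. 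Then $\hat{O}_BO_LO_R=\adj{O_B}\rho^0+\adj{O_B}E$, and since $\adj{O_B}\rho^0=\inp{\cdot}{\uu_0}_{\X}\adj{O_B}\uu_0$ is a rank-one operator within $\landau{\exp{-c_4l/2}}$ of $\rho^0$ in operator norm, the product $\hat{O}_BO_LO_R$ still approximates $\rho^0$ to the claimed order after relabelling the constants.

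The main obstacle is not the assembly, which is routine, but the positivity upgrade: closeness is available only in operator norm for the \emph{full} triple product, whereas positivity must be imposed on the middle factor \emph{alone} without enlarging its support. This rules out symmetrizing the whole product (which would destroy locality) and is precisely what forces the transfer of the estimate onto the action on $\uu_0$ and the use of $\adj{O_B}O_B$. A secondary, purely technical point is preventing the $q$-dependent polynomial prefactors from spoiling the exponential rate, which works out cleanly only because $q$ is taken linear in $l$.
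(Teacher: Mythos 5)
Your proposal is correct and, for the main estimate, follows essentially the same route as the paper: the paper's proof likewise assembles \eqref{eq:finallemma} from Lemmas \ref{lemma:ol}, \ref{lemma:tpq} and \ref{lemma:ob} via the triangle inequality, uses $\norm{O_LO_R}{\Lnorm}\leq 1$, and absorbs the $q$-polynomial prefactors (with $q$ linear in $l$) into a slightly smaller exponential rate, yielding the $l^2\exp{-cl}$-type bound you describe. The one place you genuinely depart from the paper is the positivity of the middle factor: the paper disposes of it in one line (``w.l.o.g.\ we can assume it is positive, otherwise the same arguments as in \cite[Lemma 4]{Hastings} apply''), whereas you give a self-contained upgrade, replacing $O_B$ by $\adj{O_B}O_B$ and transferring the operator-norm estimate through the action on $\uu_0$: from $\norm{(O_L-\id)\uu_0}{\X},\,\norm{(O_R-\id)\uu_0}{\X}\leq\exp{-c_1l/2}$ (Lemma \ref{lemma:ol}) and $O_BO_LO_R=\rho^0+E$ you get $\norm{O_B\uu_0-\uu_0}{\X}=\landau{\exp{-c_4l}}$, then $\norm{\adj{O_B}\uu_0-\uu_0}{\X}=\landau{\exp{-c_4l/2}}$ by the norm-expansion argument, and finally $\adj{O_B}O_BO_LO_R=\adj{O_B}\rho^0+\adj{O_B}E$ with $\adj{O_B}\rho^0$ rank one and within $\landau{\exp{-c_4l/2}}$ of $\rho^0$. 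This argument is sound: it preserves self-adjointness, positivity, the norm bound and the support of the middle factor, and only halves the exponential rate, which is harmless after relabelling constants. In this respect your write-up is more complete than the paper's, which outsources the positivity step to an external reference; the small loose end (bounding $\norm{(O_LO_R-\id)\uu_0}{\X}$ by the two single-factor bounds) is a one-line triangle inequality.
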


\begin{proof}
    The operators $O_L$ and $O_R$ were defined in Lemma \ref{lemma:ol} and
    their properties follow therefrom. The operator $O_B$ was defined in Lemma
    \ref{lemma:ob}. W.l.o.g.\ we can assume it is positive, otherwise
    the same arguments as in \cite[Lemma 4]{Hastings} apply.

    By Lemmas \ref{lemma:tpq}, \ref{lemma:ob} and since
    $\norm{O_LO_R}{\Lnorm}\leq 1$,
    we obtain an error bound with
    asymptotic dependence on $l$ of the form $l^2\exp{-cl}$. Hence, we
    can pick constants $C_4>0$, $c_4>0$ to satisfy \eqref{eq:finallemma}.
    This completes the proof.
\end{proof}

\subsection{Relative Entropy Bounds}
The key idea for the proof of the area law is that we can bound the relative
entropy from below by a non-vanishing term that grows with $l$. The
precise asymptotics of this lower bound will imply a constant upper bound
on the value of entropy.

\begin{lemma}\label{lemma:relent}
    Suppose Assumption \ref{ass:op} holds. Then, for
    \begin{align*}
        \E_B:=\tr{O_B(\rho^0_{1,j}\otimes\rho^0_{j+1,d})},
    \end{align*}
    and
    $\varepsilon(l):=C_4J^2\exp{-c_4l}$,
    we have the lower bound
    \begin{align}\label{eq:relent}
        &\s{\rho^0_{j-l-2,j}}+\s{\rho^0_{j+1,j+l+3}}-\s{\rho^0_{j-l-2,j+l+3}}\notag\\
        &\geq
        (1-2\varepsilon(l))\log_2\left[\frac{1-2\varepsilon(l)}{\E_B}\right]
        +2\varepsilon(l)\log_2\left[\frac{2\varepsilon(l)}{1-\E_B}\right].
    \end{align}
\end{lemma}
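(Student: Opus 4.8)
The plan is to recognize the left-hand side as a quantum mutual information and to bound it from below by a classical binary relative entropy via the data-processing inequality applied to the two-outcome measurement defined by $O_B$.

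First I would set $\rho_A := \rho^0_{j-l-2,j}$, $\rho_B := \rho^0_{j+1,j+l+3}$ and $\rho_{AB} := \rho^0_{j-l-2,j+l+3}$, so that $\rho_A,\rho_B$ are the two marginals of $\rho_{AB}$ obtained by splitting the bulk region $C:=\X_{j-l-2,j+l+3}$ at the cut $j\mid j+1$. Under Assumption \ref{ass:op} and Proposition \ref{prop:finentropy} the three entropies are finite, so the left-hand side is exactly the mutual information
\[
\s{\rho_A}+\s{\rho_B}-\s{\rho_{AB}} = S(\rho_{AB}\,\|\,\rho_A\otimes\rho_B),
\]
where $S(\sigma\,\|\,\tau):=\tr{\sigma\log_2\sigma}-\tr{\sigma\log_2\tau}$. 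Here I would spend one line checking that $\log_2(\rho_A\otimes\rho_B)=\log_2\rho_A\otimes\id+\id\otimes\log_2\rho_B$, which is what turns the mutual information into this relative entropy.

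Next I would invoke Theorem \ref{thm:obolor}, running its construction at a rescaled radius so that $\supp{O_B}$ lies inside $C$ (its nominal support $\X_{j-3l-2,j+3l+3}$ is shrunk into $\X_{j-l-2,j+l+3}$, the error still being of the form $C_4J^2\exp{-c_4l}=\varepsilon(l)$ after renaming constants). Since $O_B$ is positive with $\norm{O_B}{\Lnorm}\le 1$, the pair $\set{O_B,\id-O_B}$ is a two-outcome POVM and induces a CPTP measurement channel $\cM$ on states over $C$. Monotonicity of relative entropy under $\cM$ — the data-processing inequality, valid for normal states on infinite-dimensional spaces — yields
\[
S(\rho_{AB}\,\|\,\rho_A\otimes\rho_B)\ge p\log_2\frac{p}{\E_B}+(1-p)\log_2\frac{1-p}{1-\E_B},
\]
with $p:=\tr{O_B\rho_{AB}}=\tr{O_B\rho^0}$; using that $O_B$ is supported on $C$, the first measurement probability of $\rho_A\otimes\rho_B$ is $\tr{O_B(\rho_A\otimes\rho_B)}=\tr{O_B(\rho^0_{1,j}\otimes\rho^0_{j+1,d})}=\E_B$, matching the definition.

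The overlap $p=\inp{\uu_0}{O_B\uu_0}_{\X}$ I would bound below from the remaining inputs. By Theorem \ref{thm:obolor}, $\norm{O_BO_LO_R-\rho^0}{\Lnorm}\le\varepsilon(l)$, so $|\inp{\uu_0}{O_BO_LO_R\uu_0}_{\X}-1|\le\varepsilon(l)$ since $\rho^0\uu_0=\uu_0$; by Lemma \ref{lemma:ol}, $\norm{(O_LO_R-\id)\uu_0}{\X}\le 2\exp{-c_1l/2}$, and $\norm{O_B}{\Lnorm}\le1$ gives $|\inp{\uu_0}{O_B(O_LO_R-\id)\uu_0}_{\X}|\le 2\exp{-c_1l/2}$, whence $p\ge 1-2\varepsilon(l)$ after absorbing $\exp{-c_1l/2}$ into $\varepsilon(l)$ (taking $c_4\le c_1/2$). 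Finally, writing $h(x):=x\log_2\frac{x}{\E_B}+(1-x)\log_2\frac{1-x}{1-\E_B}$, I note $h$ decreases on $(0,\E_B)$ and increases on $(\E_B,1)$ with $h(\E_B)=0$; in the relevant regime $\E_B\le 1-2\varepsilon(l)$ both $p$ and $1-2\varepsilon(l)$ sit in the increasing branch with $p\ge 1-2\varepsilon(l)$, so $h(p)\ge h(1-2\varepsilon(l))$, which is exactly the claimed bound (the complementary case $\E_B> 1-2\varepsilon(l)$ is covered by non-negativity of the mutual information). I expect the two delicate points to be (i) the bookkeeping that forces $\supp{O_B}\subset C$ while preserving $\varepsilon(l)$, since the channel $\cM$ must act on the same region as $\rho_{AB}$, and (ii) justifying the data-processing inequality and the mutual-information identity in the unbounded setting, where finiteness of all entropies must be secured before any logarithms are manipulated.
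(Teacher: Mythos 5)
Your proposal follows essentially the same route as the paper's proof: rewrite the left-hand side as the relative entropy $\s{\rho_{AB}||\rho_A\otimes\rho_B}$, apply monotonicity of relative entropy under the two-outcome measurement $\set{O_B,\,\id-O_B}$ (this is precisely the paper's channel $E:\Tr{\X}\rightarrow\Tr{\C^2}$ in \eqref{eq:thanks2}), and bound the overlap $\tr{\rho^0O_B}\geq 1-2\varepsilon(l)$ from Theorem \ref{thm:obolor}, Lemma \ref{lemma:ol} and Cauchy--Schwarz, absorbing $\exp{-c_1l/2}$ into $\varepsilon(l)$. Your explicit insistence that $O_B$ be (re)constructed with $\supp{O_B}\subset\X_{j-l-2,j+l+3}$ is bookkeeping the paper passes over silently, yet it is needed both for $\tr{\rho^0O_B}=\tr{\rho^0_{j-l-2,j+l+3}O_B}$ and for identifying $\tr{O_B(\rho^0_{j-l-2,j}\otimes\rho^0_{j+1,j+l+3})}$ with $\E_B$; on that point you are more careful than the text. (Minor caveat: you invoke Proposition \ref{prop:finentropy} for finiteness of the entropies, but that proposition has hypotheses beyond Assumption \ref{ass:op}; the paper simply does not address finiteness.)

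The one step that genuinely fails is your disposal of the complementary case $\E_B>1-2\varepsilon(l)$. The right-hand side of \eqref{eq:relent} is exactly $h(1-2\varepsilon(l))$, where
\begin{align*}
    h(x)=x\log_2\left[\frac{x}{\E_B}\right]+(1-x)\log_2\left[\frac{1-x}{1-\E_B}\right]
\end{align*}
is the binary relative entropy $D\left(\mathrm{Ber}(x)\,\|\,\mathrm{Ber}(\E_B)\right)$. As a relative entropy it is \emph{non-negative}, and strictly positive whenever $1-2\varepsilon(l)\neq\E_B$ (it even diverges as $\E_B\rightarrow 1$). Hence non-negativity of the mutual information only yields $\mathrm{LHS}\geq 0$, which does not dominate a strictly positive right-hand side; moreover, data processing gives $\mathrm{LHS}\geq h(p)$ with $p\geq 1-2\varepsilon(l)$, and when $\E_B>1-2\varepsilon(l)$ the value $p$ may lie between $1-2\varepsilon(l)$ and $\E_B$, where $h(p)<h(1-2\varepsilon(l))$. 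So your argument establishes the claimed inequality only under the additional restriction $\E_B\leq 1-2\varepsilon(l)$. In fairness, the paper's own final step carries the same hidden restriction (its justification ``the first term is positive while the second is negative'' presumes $\E_B\leq\tr{\rho^0O_B}$, and the monotonicity it implicitly uses needs $\E_B\leq 1-2\varepsilon(l)$), and the restriction is harmless downstream, where the lemma is only invoked in a regime in which $\E$, and hence $\E_B$, is of order $\varepsilon(l)$; but the case split you introduced cannot be closed by the reason you gave.
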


\begin{proof}
    The estimated quantity is commonly referred to as \emph{quantum mutual
    information}.
    We briefly show that it is equal to a specific expression of
    relative entropy. By definition of relative entropy
    \begin{align*}
        \s{\rho_{AB}||\rho_A\otimes\rho_B}=-\s{\rho_{AB}}-\tr{\rho_{AB}
        \log_2(\rho_A\otimes\rho_B)}.
    \end{align*}
    For the latter term we compute
    \begin{align*}
        &-\tr{\rho_{AB}\log_2[(\rho_A\otimes\id_B)(\id_A\otimes\rho_B)]}
        =-\tr{\rho_{AB}\left[\log_2(\rho_A\otimes\id_B)+
        \log_2(\id_A\otimes\rho_B)\right]}\\
        &=-\tr{\ptr{B}{\rho_{AB}}\log_2(\rho_A)}-\tr{\ptr{A}{\rho_{AB}}\log_2(\rho_B)}
        =\s{\rho_A}+\s{\rho_B}.
    \end{align*}

    From Theorem \ref{thm:obolor} it follows
    \begin{align*}
        \tr{\rho^0O_BO_LO_R}&=\tr{\rho^0(\rho^0-\rho^0+O_BO_LO_R)}]
        =\tr{(\rho^0)^2}+\tr{\rho^0(O_BO_LO_R-\rho^0)}\notag
        \\
        &\geq 1-\tr{|\rho^0|}\norm{O_BO_LO_R\rho^0-\rho^0}{\Lnorm}
        \geq 1-\varepsilon(l).
    \end{align*}
    Thus, applying the Cauchy-Schwarz inequality we can estimate
    \begin{align}\label{eq:lowerb}
        \tr{\rho^0O_B}=\tr{\rho_{j-l-2,j+l+3}O_B}\geq 1-2\varepsilon(l).
    \end{align}

    Next, define the map (quantum channel) $E:\Tr{\X}\rightarrow\Tr{\C^2}$
    by
    \begin{align}\label{eq:thanks2}
        E(\rho):=\tr{\rho O_B}\inp{\cdot}{(1, 0)}_{\C^2}+\tr{\rho(\id-O_B)}
        \inp{\cdot}{(0, 1)}_{\C^2}.
    \end{align}
    By Theorem \ref{thm:obolor}, $O_B$ is a bounded, positive
    operator with $\norm{O_B}{\Lnorm}\leq 1$. Thus, one easily checks that
    $E$ is a positive trace preserving map. By
    \cite[Theorem 1]{Monotone}
    the relative entropy is monotone under $E$ and we get
    \begin{align*}
        &\s{\rho^0_{j-l-2,j}}+\s{\rho^0_{j+1,j+l+3}}-\s{\rho^0_{j-l-2,j+l+3}}
        =\s{\rho^0_{j-l-2,j+l+3}||\rho^0_{j-l-2,j}\otimes\rho^0_{j+1,j+l+3}}\\
        &\geq
        \s{E(\rho^0_{j-l-2,j+l+3})||E(\rho^0_{j-l-2,j}\otimes\rho^0_{j+1,j+l+3})}
        =\tr{\rho^0_{j-l-2,j+l+3}O_B}\log_2[\tr{\rho^0_{j-l-2,j+l+3}O_B}]\\
        &+
        (1-\tr{\rho^0_{j-l-2,j+l+3}O_B})\log_2[1-\tr{\rho^0_{j-l-2,j+l+3}O_B}]\\
        &-\tr{\rho^0_{j-l-2,j+l+3}O_B}
        \log_2[\tr{\rho^0_{j-l-2,j}\otimes\rho^0_{j+1,j+l+3}O_B}]\\
        &-(1-\tr{\rho^0_{j-l-2,j+l+3}O_B})
        \log_2[1-\tr{\rho^0_{j-l-2,j}\otimes\rho^0_{j+1,j+l+3}O_B}]
        =\tr{\rho^0_{j-l-2,j+l+3}O_B}\\
        &\times\left\{\log_2[\tr{\rho^0_{j-l-2,j+l+3}O_B}]-
        \log_2[\tr{\rho^0_{j-l-2,j}\otimes\rho^0_{j+1,j+l+3}O_B}]\right\}
        +(1-\tr{\rho^0_{j-l-2,j+l+3}O_B})\\
        &\times\left\{\log_2[1-\tr{\rho^0_{j-l-2,j+l+3}O_B}]-
        \log_2[1-\tr{\rho^0_{j-l-2,j}\otimes\rho^0_{j+1,j+l+3}O_B}]\right\}\\
        &\overset{(*)}{\geq}
        (1-2\varepsilon(l))\log_2\left[\frac{1-2\varepsilon(l)}{\E_B}\right]
        +2\varepsilon(l)\log_2\left[\frac{2\varepsilon(l)}{1-\E_B}\right]
    \end{align*}
    where $(*)$ is due to \eqref{eq:lowerb}
    and since the first term is positive while the
    second is negative.
    This completes the proof.
\end{proof}

We need to replace $\E_B$ by an expectation value that is independent of
the approximation operator $O_B$.

\begin{lemma}
    For $\E:=\tr{\rho^0[\rho^0_{1,j}\otimes\rho^0_{j+1,d}]}$
    we have the bound
    \begin{align}\label{eq:pb}
        \E_B\leq\frac{\E-\sqrt{2\E_B\varepsilon(l)}+2\varepsilon(l)}{1-2\varepsilon(l)}.
    \end{align}
\end{lemma}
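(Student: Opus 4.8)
The plan is to interpolate between $\E=\tr{\rho^0\omega}$ and $\E_B=\tr{O_B\omega}$, where $\omega:=\rho^0_{1,j}\otimes\rho^0_{j+1,d}$ is the product state common to both, via the auxiliary quantity $\tr{O_BO_LO_R\omega}$. Two ingredients drive the estimate. First, Theorem \ref{thm:obolor} lets me replace $\rho^0$ by $O_BO_LO_R$: since $\omega$ is a state, $\norm{\omega}{\Trnorm}=1$, and therefore $|\tr{O_BO_LO_R\omega}-\E|=|\tr{(O_BO_LO_R-\rho^0)\omega}|\le\norm{O_BO_LO_R-\rho^0}{\Lnorm}\norm{\omega}{\Trnorm}\le\varepsilon(l)$. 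Second, Lemma \ref{lemma:ol} tells me that $O_L$ and $O_R$ act almost as the identity on the reduced states entering $\omega$. Writing $\E_B=\tr{O_BO_LO_R\omega}+\tr{O_B(\id-O_LO_R)\omega}$, it then remains only to control the second summand.

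For the near-identity action I would use the Schmidt expansion of $\uu_0$ across the cut $\X_{1,j}\otimes\X_{j+1,d}$. Since $O_L$ is supported on $\X_{1,j}$ and is a projection (Lemma \ref{lemma:ol}), it acts only on the left factor, and a direct computation gives $\tr{(\id-O_L)\omega}=\tr{(\id-O_L)\rho^0_{1,j}}=\normsq{(\id-O_L)\uu_0}{\X}\le\exp{-c_1l}$, and symmetrically for $O_R$. Using the identity $\id-O_LO_R=(\id-O_L)+O_L(\id-O_R)$ together with the product structure of $\omega$, this yields $\tr{(\id-O_LO_R)\omega}\le 2\exp{-c_1l}\le 2\varepsilon(l)$, the last step absorbing the geometric factors into the (coarser) constants defining $\varepsilon(l)$. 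The crux is then a Cauchy--Schwarz estimate in the Hilbert--Schmidt inner product: factoring $O_B=O_B^{1/2}O_B^{1/2}$ and $\omega=\omega^{1/2}\omega^{1/2}$, I write $\tr{O_B(\id-O_LO_R)\omega}=\inp{O_B^{1/2}\omega^{1/2}}{O_B^{1/2}(\id-O_LO_R)\omega^{1/2}}_{\HSnorm}$. Here $\normsq{O_B^{1/2}\omega^{1/2}}{\HSnorm}=\tr{O_B\omega}=\E_B$, while $0\le O_B\le\id$ and $(\id-O_LO_R)^2=\id-O_LO_R$ give $\normsq{O_B^{1/2}(\id-O_LO_R)\omega^{1/2}}{\HSnorm}\le\tr{(\id-O_LO_R)\omega}\le 2\varepsilon(l)$. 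Hence $|\tr{O_B(\id-O_LO_R)\omega}|\le\sqrt{2\E_B\,\varepsilon(l)}$.

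Combining the two ingredients produces an inequality of the shape $\E_B\le\E+\varepsilon(l)+\sqrt{2\E_B\,\varepsilon(l)}$, which I would read as a quadratic inequality in $\sqrt{\E_B}$ and rearrange to isolate $\E_B$; the normalizing denominator $1-2\varepsilon(l)$ and the refined constants in \eqref{eq:pb} arise from carrying out the replacement of $\rho^0$ in its self-adjoint squared form $\rho^0=(\rho^0)^2\approx O_LO_RO_B^2O_LO_R$ (which costs $2\varepsilon(l)$) and from the lower bound $\tr{\rho^0O_B}\ge 1-2\varepsilon(l)$ established in \eqref{eq:lowerb}. The main obstacle is precisely this Cauchy--Schwarz and bookkeeping step: because $O_B$ is only positive with $\norm{O_B}{\Lnorm}\le 1$ rather than a projection, and does not commute with $O_LO_R$, one must route every estimate through the square roots $O_B^{1/2}$ and $\omega^{1/2}$ and invoke $O_B\le\id$ to collapse the middle factor to the projection $\id-O_LO_R$. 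This is exactly what forces the error to enter as $\sqrt{2\E_B\,\varepsilon(l)}$, scaling with $\sqrt{\E_B}$, rather than as a bare $\sqrt{\varepsilon(l)}$, which would be too weak to feed into the subsequent relative-entropy bound of Lemma \ref{lemma:relent}.
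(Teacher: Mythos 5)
Your core estimates are correct, but your route differs in mechanism from the paper's. Writing $\omega:=\rho^0_{1,j}\otimes\rho^0_{j+1,d}$, the paper follows \cite[Equation (24)]{Hastings}: it introduces $\E_{LR}:=\tr{O_LO_R\omega}$, notes $\E_{LR}\geq 1-2\varepsilon(l)$ as in \eqref{eq:lowerb}, and applies a covariance Cauchy--Schwarz inequality in the state $\omega$ to the pair $O_B$, $O_LO_R$:
\begin{align*}
    \E&\geq\tr{O_BO_LO_R\omega}-\varepsilon(l)
    \geq\E_B\E_{LR}-\sqrt{\E_B-\E_B^2}\sqrt{\E_{LR}-\E_{LR}^2}-\varepsilon(l)\\
    &\geq\E_B(1-2\varepsilon(l))-\sqrt{2\E_B\varepsilon(l)}-\varepsilon(l).
\end{align*}
You instead split $\E_B=\tr{O_BO_LO_R\omega}+\tr{O_B(\id-O_LO_R)\omega}$ exactly and control the second summand by a Hilbert--Schmidt Cauchy--Schwarz, exploiting that $\id-O_LO_R$ is a projection and $0\leq O_B\leq\id$. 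The ingredients are identical --- Theorem \ref{thm:obolor} to exchange $\rho^0$ and $O_BO_LO_R$ at cost $\varepsilon(l)$, and Lemma \ref{lemma:ol} for the near-identity action of $O_LO_R$ on $\omega$; indeed your bound $\tr{(\id-O_LO_R)\omega}\leq 2\varepsilon(l)$ is exactly the paper's $1-\E_{LR}\leq 2\varepsilon(l)$, including the same absorption of $\exp{-c_1l}$ into $\varepsilon(l)$ --- and both routes generate the same cross term $\sqrt{2\E_B\varepsilon(l)}$. What your version buys: it is self-contained (no appeal to the covariance argument of \cite{Hastings}), it avoids the variance bookkeeping, and it gives the marginally sharper $\E_B\leq\E+\varepsilon(l)+\sqrt{2\E_B\varepsilon(l)}$, without the $(1-2\varepsilon(l))^{-1}$ factor.

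The caveat is your final step. What you actually prove carries the cross term with a \emph{plus} sign, and your closing claim --- that reading the inequality as a quadratic in $\sqrt{\E_B}$ and invoking \eqref{eq:lowerb} recovers \eqref{eq:pb} verbatim --- does not work: no rearrangement flips $+\sqrt{2\E_B\varepsilon(l)}$ into the $-\sqrt{2\E_B\varepsilon(l)}$ of the printed numerator (solving the quadratic merely trades $\E_B$ for $\E$ under the root, still with a plus sign). However, this is not a gap relative to the paper: the paper's own displayed chain rearranges to $\E_B\leq\left(\E+\sqrt{2\E_B\varepsilon(l)}+\varepsilon(l)\right)/(1-2\varepsilon(l))$, and its concluding ``Hence'' performs the very same sign slip. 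The printed statement thus appears to carry a typo, and what both proofs genuinely establish is the plus-sign version. That weaker version suffices downstream: the derivation of \eqref{eq:sl} only needs $\E_B\leq(\E+\tilde{\varepsilon}(l))/(1-2\varepsilon(l))$ with $\tilde{\varepsilon}(l)$ exponentially small in $l$, and, using $\E_B\leq 1$, the choice $\tilde{\varepsilon}(l)=\sqrt{2\varepsilon(l)}+\varepsilon(l)$ qualifies after halving the rate $c_4$.
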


\begin{proof}
    Define $\E_{LR}:=\tr{O_LO_R(\rho^0_{1,j}\otimes\rho^0_{j+1,d})}$. As in
    \eqref{eq:lowerb}, $\E_{LR}\geq 1-2\varepsilon(l)$. Applying
    the same arguments as in \cite[Equation (24)]{Hastings}, i.e.,
    by applying Cauchy-Schwarz to the co-variance of operators and
    since $\E_B\leq 1$, we obtain
    \begin{align*}
        \E&\geq \tr{O_BO_LO_R(\rho^0_{1,j}\otimes\rho^0_{j+1,d})}-\varepsilon(l)
        \geq \E_B\E_{LR}-\sqrt{\E_B-\E_B^2}\sqrt{\E_{LR}-\E_{LR}^2}-\varepsilon(l)\\
        &\geq \E_B(1-2\varepsilon(l))-\sqrt{\E_B}\sqrt{2\varepsilon(l)}-
        \varepsilon(l).
    \end{align*}
    Hence,
    \begin{align*}
        \E_B\leq\frac{\E-\sqrt{2\E_B\varepsilon(l)}+2\varepsilon(l)}{1-2
        \varepsilon(l)}.
    \end{align*}
    This completes the proof.
\end{proof}

We want to use Lemma \ref{lemma:relent} to estimate entropy asymptotics
w.r.t.\ the length of a chain $l$. To this end, we need to quantify
the worst possible entropy scaling. Of course, to avoid a tautology,
this estimate has to include the worst case of exponential scaling in ranks,
i.e., linearly growing entropy.

In Section \ref{sec:entconv},
we discussed assumptions
under which the entropy is finite for any subsystem.
However, unlike in the finite dimensional case, we do
not know exactly how the entropy bounds differ from site to site.
Since a detailed investigation of this goes beyond the scope of this work, for now
we require the following assumption.

\begin{assumption}\label{ass:smax}
    We assume that there exists a constant $S_{\max}>0$ such that
    $\s{\rho^0_{j, j}}\leq S_{\max}$ for any $1\leq j\leq d$, i.e.,
    the single site entropies are bounded by $S_{\max}$. Moreover,
    we assume for any $1\leq j\leq d$ and $0\leq l\leq d-j$ that we have
    $\s{\rho^0_{j,j+l}}\leq (1+l)S_{\max}$, i.e., entropy grows at most
    linearly in $S_{\max}$
    (which still includes exponential scaling in ranks for
    a given approximation accuracy).
\end{assumption}

\begin{lemma}
    Let
    \begin{align*}
        S_l:=\max\set{\s{\rho^0_{j,j+l-1}}:\,1\leq j\leq d,\;
        [j,j+l-1]\subset[1, d]}.
    \end{align*}
    Then, under Assumptions \ref{ass:op} and \ref{ass:smax},
    there exists a constant $C_5>0$ such that
    \begin{align}\label{eq:sl}
        S_l\leq C_5(S_{\max}+1)l-\log_2(l)\log
        \left[\frac{1}{\E+2\varepsilon(l)}\right].
    \end{align}
\end{lemma}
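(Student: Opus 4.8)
The plan is to upgrade the single-scale estimate of Lemma~\ref{lemma:relent} into a length-doubling recursion for the maximal interval entropy, and then to verify \eqref{eq:sl} by induction on the interval length. Write $R(l)$ for the right-hand side of \eqref{eq:relent}, so that for every admissible cut $j$,
\begin{align*}
    \s{\rho^0_{j-l-2,j}}+\s{\rho^0_{j+1,j+l+3}}-\s{\rho^0_{j-l-2,j+l+3}}\geq R(l).
\end{align*}

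First I would remove the dependence on the approximand $O_B$. Inserting the bound \eqref{eq:pb} on $\E_B$ into $R(l)$ and using that $\varepsilon(l)=C_4J^2\exp{-c_4l}$ decays exponentially, one obtains
\begin{align*}
    R(l)\geq\log_2\left[\frac{1}{\E+2\varepsilon(l)}\right]-\landau{\varepsilon(l)\left|\log_2\varepsilon(l)\right|},
\end{align*}
so that, up to exponentially small terms, $R(l)$ matches the logarithmic coefficient appearing in \eqref{eq:sl}, and in particular $R(l)\geq 0$ in the regime $\E+2\varepsilon(l)\leq 1$ of interest.

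Next I would convert the inequality into a recursion. The two left intervals have length $l+3$ and their union has length $2l+6$; bounding $\s{\rho^0_{j-l-2,j}}$ and $\s{\rho^0_{j+1,j+l+3}}$ by $S_{l+3}$ and maximising over the admissible cuts $j$ yields
\begin{align*}
    S_{2l+6}\leq 2S_{l+3}-R(l),
\end{align*}
so the doubled interval carries strictly less than twice the entropy of its halves. I would then prove \eqref{eq:sl} by induction along the dyadic scales from a single site up to length $l$ --- roughly $\log_2(l)$ steps. The base case is supplied by Assumption~\ref{ass:smax}, which bounds single-site and short-interval entropies linearly in $S_{\max}$. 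For the inductive step one feeds the recursion back into the ansatz $S_m\leq C_5(S_{\max}+1)m-\log_2(m)\log[1/(\E+2\varepsilon(l))]$: the linear term is inherited verbatim since $2(l+3)=2l+6$, while the logarithmic coefficient is reproduced because $\log_2(2l+6)\leq 2\log_2(l+3)$ together with $R(l)\geq 0$. Choosing $C_5$ large enough to satisfy the base case and to absorb the $\landau{1}$ and $\landau{\varepsilon(l)|\log_2\varepsilon(l)|}$ constants closes the induction.

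The main obstacle is the bookkeeping in the inductive step rather than any single hard estimate. One must check that the coefficient $\log[1/(\E+2\varepsilon(l))]$, which depends weakly on the scale through $\varepsilon(l)$ and is attached to one particular bipartition through $\E$, may be frozen at the coarsest scale at the cost of only exponentially small terms, and that the per-$j$ lower bound $R(l)$ is uniform enough to survive the maximisation defining $S_{2l+6}$. A conceptual subtlety is that the recursion is a genuine doubling recursion, whose full unfolding would give a saving linear in $l$; since \eqref{eq:sl} claims only the weaker logarithmic-in-$l$ saving, the induction should be arranged so that the stated bound is an invariant of the recursion, which keeps the constant $C_5$ under control rather than chasing the strongest possible correction.
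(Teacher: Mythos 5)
Your proposal follows the same route as the paper: combine Lemma \ref{lemma:relent} with \eqref{eq:pb} to obtain a doubling inequality for the maximal interval entropy, then iterate dyadically with Assumption \ref{ass:smax} supplying the base. The difference lies in how the iteration is closed, and that is where your argument has a genuine gap. You take the claimed bound itself, $S_m\leq C_5(S_{\max}+1)m-\log_2(m)\Lambda$ with $\Lambda:=\log\left[1/(\E+2\varepsilon(l))\right]$ frozen at the coarsest scale, as the inductive invariant, and you observe that the inductive step then needs only $R(m)\geq 0$ together with $2\log_2(m+3)\geq\log_2(2m+6)$. Precisely because the step consumes only non-negativity of $R$, all information about the actual size of the mutual-information gain must be carried by the base case --- and there it cannot be discharged. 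The recursion is unavailable below a constant scale $m_1\geq 2$: Lemma \ref{lemma:relent} needs the three intervals to exist, and the interpretation of its right-hand side as a relative entropy (hence $R\geq 0$) needs $2\varepsilon(m)\leq 1$, which fails for small $m$ since $\varepsilon(m)=C_4J^2\exp{-c_4m}$ is only small once $m$ exceeds an $H$-dependent constant. At such a base scale Assumption \ref{ass:smax} gives only $S_{m_0}\leq m_0S_{\max}$, so your invariant at the base forces $\log_2(m_0)\Lambda\leq\left[C_5(S_{\max}+1)-S_{\max}\right]m_0$. But $C_5$ must depend only on the Hamiltonian, whereas $\Lambda$ is a state- and cut-dependent quantity that is unbounded as $\E\rightarrow 0$ --- and Theorem \ref{thm:alaw} applies the lemma exactly in that regime, feeding in $\E\leq C_8\exp{-S_{\cut}/C_7+1}$ with $S_{\cut}$ potentially large. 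Hence no admissible choice of $C_5$ validates the base case, and the induction never starts.

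The paper earns the $-\log_2(l)$ coefficient differently: it unfolds the recursion, keeps one gain per dyadic level (the dyadic weights being dropped by non-negativity), and compares each level's gain $\log_2\left[1/(\E+2\varepsilon(2^k))\right]$ to the top-scale quantity, collecting the mismatch terms $2\varepsilon(2^k)\log\left[1/(\E+2\varepsilon(2^k))\right]$ into a series bounded by a constant; the base contributes only $lS_{\max}$. In other words, the logarithmic saving is accumulated from the roughly $\log_2(l)$ applications of Lemma \ref{lemma:relent}, each costing essentially $-\Lambda$ up to summable corrections; it cannot be postulated as an invariant and verified at the base, which is exactly where your arrangement places the burden. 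Your own remark that the induction should be arranged ``so that the stated bound is an invariant of the recursion'' is the root of the problem: an invariant closed by $R\geq 0$ alone can carry no information about $\E$, so any such arrangement must fail. To repair the proof you would strengthen the inductive hypothesis so that the step genuinely consumes the size of $R(m)$ --- e.g.\ an invariant of the form $S_m\leq C_5(S_{\max}+1)m-(\text{number of levels below }m)\cdot\Lambda+(\text{summable corrections})$ --- at which point you are reproducing the paper's unfolding.
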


\begin{proof}
    The 2nd term in \eqref{eq:relent} can be neglected since it vanishes
    rapidly for large $l$. For the first term we use \eqref{eq:pb} to obtain
    \begin{align*}
        &\s{\rho^0_{j-l-2,j}}+\s{\rho^0_{j+1,j+l+3}}-\s{\rho^0_{j-l-2,j+l+3}}\\
        &\geq
        (1-2\varepsilon(l))\log_2\left[\frac{1}{\E+2\varepsilon(l)}\right]
        -C_5.
    \end{align*}
    For an appropriately chosen $C_5>0$. Thus, we can estimate
    \begin{align*}
        S_{2l}\leq 2S_l+C_5-
        (1-2\varepsilon(l))\log_2\left[\frac{1}{\E+2\varepsilon(l)}\right].
    \end{align*}
    Now we simply iterate this inequality as in
    \cite[Equation (10)]{Hastings} and use Assumption \ref{ass:smax}
    to get
    \begin{align*}
        S_l\leq lS_{\max}+lC_5+\sum_{k=0}^\infty
        2\varepsilon(2^k)\log\left[\frac{1}{\E+2\varepsilon(2^k)}\right]
        -\log_2(l)\log_2\left[\frac{1}{\E+2\varepsilon(l)}\right].
    \end{align*}
    The log term in the series diverges at most linearly, thus the series
    can be bounded by a constant. Adjusting $C_5>0$ we obtain the final
    statement
    \begin{align*}
        S_l\leq C_5(S_{\max}+1)l-\log_2(l)\log
        \left[\frac{1}{\E+2\varepsilon(l)}\right].
    \end{align*}
    This completes the proof.
\end{proof}

\subsection{Expectation Value Bounds}
Equation \eqref{eq:sl} is the key for the final argument. This in turn depends
on a bound for \[\E=\tr{\rho^0[\rho^0_{1,j}\otimes\rho^0_{j+1,d}]}.\]
In order to derive
such a bound, we want to use the technique from \cite[Lemma 2]{Hastings}
and first replace $\rho^0$ by the approximation from Theorem \ref{thm:obolor},
$\rho^0\approx O_BO_LO_R$.

To this end, consider the mapping
\begin{align*}
    T_m=O_B(m)O_L(m)O_R(m)[\rho^0_{1,j}\otimes\rho^0_{j+1,d}]O_R(m)O_L(m)O_B(m),
\end{align*}
where we used $m$ to indicate the support length.
It is
trace class since $\rho^0_{1,j}\otimes\rho^0_{j+1,d}\in\State{\X}$ and the trace
class is a two sided ideal in $\LL{\X}$. Thus, we can apply the Schmidt decomposition to
$T_m$ w.r.t.\ the bipartite cut $\X=\X_{1,j}\otimes\X_{j+1,d}$.

Our bound requires some knowledge about the low-rank approximation properties
of $T_m$, i.e., convergence w.r.t.\ the Schmidt rank $r$.
Note that even though
$O_LO_R[\rho^0_{1,j}\otimes\rho^0_{j+1,d}]O_LO_R$ has rank 1, the operator
$T_m$ may have infinite rank. The operator $O_B$ is a unitary transformation
with local support. However, beyond this, it is not clear how exactly $O_B$
influences low-rank approximability. Since a deeper investigation of this
is rather intricate, for this work we focus on three cases. These cases
are prototypical for known approximation rates based on assumptions of
smoothness for the operator kernels, i.e., in our case smoothness of
eigenfunctions of $H$ (see \cite{Schneider}). Note that in all three cases
ranks scale exponentially for a given approximation accuracy.

\begin{assumption}\label{ass:tm}
    Define the normalized map
    \begin{align*}
        T_m&:=O_B(m)O_L(m)O_R(m)[\rho^0_{1,j}\otimes\rho^0_{j+1,d}]O_R(m)O_L(m)O_B(m)
        \in\Trp{\X},\\
        \rho_m&:=T_m/\tr{T_m}\in\State{\X}.
    \end{align*}
    Note that the support and thus effective dimensionality of this
    map is $2m+6$. Let $\rho_m^r$ denote the best
    rank-$r$ approximation w.r.t.\ the bipartite cut
    $\X_{1,j}\otimes\X_{j+1,d}$. For some rate $s>0$ and constant
    $C_6>0$, the three cases we consider are
    \begin{align}
        \norm{\rho_m-\rho_m^r}{\Lnorm}&\leq C_6r^{-\frac{s}{2m+6}}
        \label{eq:case1},\\
        \norm{\rho_m-\rho_m^r}{\Lnorm}&\leq C_6
        r^{-s}\log_2(r)^{s(2m+6)},\\
        \norm{\rho_m-\rho_m^r}{\Lnorm}&\leq C_6^{2m+6}r^{-s}
        \label{eq:case3}.
    \end{align}
\end{assumption}

\begin{lemma}\label{lemma:initbnd}
    Let Assumption \ref{ass:op} hold. Then, for the three rates
    in Cases \ref{ass:tm}, we get the respective bounds
    \begin{align}\label{eq:init}
        \s{\rho_{1,j}}&\leq C_7\left(1+\log_2^2\left[\frac{C_8}{\E}\right]\right),\\
        \s{\rho_{1,j}}&\leq C_7\left(1+\log_2\left[\frac{C_8}{\E}\right]
        \log_2\left\{\log_2\left[\frac{C_8}{\E}\right]\right\} \right),\\
        \s{\rho_{1,j}}&\leq C_7\left(1+\log_2\left[\frac{C_8}{\E}\right]\right)
        \label{eq:scase3},
    \end{align}
    for some constants $C_7>0$, $C_8>0$.
\end{lemma}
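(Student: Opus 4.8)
The plan is to control $\s{\rho_{1,j}}$, where $\rho_{1,j}=\ptr{\X_{j+1,d}}{\rho^0}$, by $\log_2 r$ for a Schmidt rank $r$ that we can afford at a prescribed accuracy, and to express both the affordable rank and that accuracy through the single scalar $\E$. The starting observation is that $\E=\tr{\rho_{1,j}^3}$, so that $\log_2(1/\E)=2\s[3]{\rho_{1,j}}$ is (twice) a R\'enyi-$3$ entropy: a small $\E$ forces a spread spectrum and is precisely the quantity that should govern the von Neumann entropy. Next I would turn $\rho_m$ into a usable proxy for the (pure) ground state. Writing $G:=O_BO_LO_R$, Theorem \ref{thm:obolor} gives $\norm{G-\rho^0}{\Lnorm}\leq\varepsilon(l)$ with $\varepsilon(l)=C_4J^2\exp{-c_4l}$; since $\rho^0_{1,j}\otimes\rho^0_{j+1,d}$ is a state and $\Tr{\X}$ is a two-sided ideal, this yields $\norm{T_m-\E\rho^0}{\Trnorm}\lesssim\varepsilon(l)$ and $|\tr{T_m}-\E|\lesssim\varepsilon(l)$, hence after normalization $\norm{\rho_m-\rho^0}{\Trnorm}\lesssim\varepsilon(l)/\E$. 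Applying the (trace-norm contractive) partial trace transfers this to $\norm{\ptr{\X_{j+1,d}}{\rho_m}-\rho_{1,j}}{\Trnorm}\lesssim\varepsilon(l)/\E$.

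On the rank side, $\ptr{\X_{j+1,d}}{\rho_m^r}$ has rank at most $r$, because the partial trace of an operator of operator-Schmidt rank $r$ across the cut is a sum of $r$ operators on $\X_{1,j}$; consequently $\s{\ptr{\X_{j+1,d}}{\rho_m^r}}\leq\log_2 r$. Converting the operator-norm truncation rates of Assumption \ref{ass:tm} into a trace-norm tail (summing the operator Schmidt values at the stated decay, which costs only polynomial-in-$r$ factors and hence is harmless after taking $\log_2$), and combining with the proxy error above, shows that $\ptr{\X_{j+1,d}}{\rho_m^r}$ is a rank-$r$, trace-norm-$\delta$ approximation of $\rho_{1,j}$ with $\delta\lesssim\varepsilon(l)/\E+(\text{case error})$. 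A Fannes-type continuity step — admissible here because the relevant entropies are uniformly bounded by Assumption \ref{ass:smax} — then gives $\s{\rho_{1,j}}\leq\log_2 r+o(\log_2 r)$.

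It remains to optimize. I would balance the proxy error against $\E$ by choosing $l$, and with it the block length $m\sim l$, so that $\varepsilon(l)\sim\E$; this forces $m\sim\log_2(C_8/\E)$. Choosing $r$ so that the case error is also $\sim\E$, I substitute $m\sim\log_2(C_8/\E)$ and $\delta\sim\E$ into the rank demanded by each rate of Assumption \ref{ass:tm}. For \eqref{eq:case1} one needs $\log_2 r\sim\tfrac{2m+6}{s}\log_2(1/\delta)$, giving $\log_2^2(C_8/\E)$; for \eqref{eq:case3} one needs only $\log_2 r\sim m+\tfrac1s\log_2(1/\delta)$, giving $\log_2(C_8/\E)$; the intermediate rate carries an extra $\log_2(r)^{s(2m+6)}$ factor that produces the $\log_2(C_8/\E)\log_2\log_2(C_8/\E)$ behavior. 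The additive $+1$ and the constants $C_7,C_8$ then absorb the continuity correction and the polynomial tail factors, yielding \eqref{eq:init}–\eqref{eq:scase3}.

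The hard part is precisely this coupled optimization in $l$ and $r$ through $\E$, together with two analytic subtleties. First, passing from the operator-norm rank bounds of Assumption \ref{ass:tm} to trace-norm control of the \emph{reduced} state requires relating the operator Schmidt spectrum of $\rho^0$ (the products $\sigma^j_k\sigma^j_{k'}$) to the reduced spectrum $\{(\sigma^j_k)^2\}$ and summing the tail at the prescribed rate. Second, von Neumann entropy is not trace-norm continuous in infinite dimensions, so the Fannes-type step must be justified through the uniform entropy bounds of Assumption \ref{ass:smax} rather than invoked unconditionally. The $1/\E$ amplification incurred when normalizing $\rho_m$ is what forces $l$ (and $m$) to grow like $\log_2(1/\E)$, and is ultimately the common source of all three logarithmic laws.
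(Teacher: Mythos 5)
Your argument breaks at the step where you pass from the operator Schmidt rank of $\rho_m^r$ to an entropy bound for its reduced state. Rank $r$ across the cut means $\rho_m^r=\sum_{k=1}^r A_k\otimes B_k$, so $\ptr{\X_{j+1,d}}{\rho_m^r}=\sum_{k=1}^r\tr{B_k}A_k$ is a sum of $r$ operators on $\X_{1,j}$ --- but a sum of $r$ operators is not an operator of rank $r$. Already for $r=1$ a product state $A\otimes B$ has operator Schmidt rank one while its partial trace $A$ can have arbitrarily large (even infinite) entropy. The familiar implication ``Schmidt rank $r$ $\Rightarrow$ reduced state has rank $r$ $\Rightarrow$ entropy $\leq\log_2 r$'' holds only for \emph{pure} states, and $\rho_m$ is mixed, so your inequality $\s{\ptr{\X_{j+1,d}}{\rho_m^r}}\leq\log_2 r$ is simply false. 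The Fannes-type step has a second, independent problem: in infinite dimensions trace-norm closeness gives no quantitative entropy continuity (the dimension factor in Fannes' inequality is infinite), and Assumption \ref{ass:smax} bounds entropies of reduced \emph{ground} states $\rho^0_{j,j+l}$, not of your approximants $\ptr{\X_{j+1,d}}{\rho_m^r}$; finiteness of the two entropies being compared does not produce a modulus of continuity. (Your conversion of the operator-norm rates of Assumption \ref{ass:tm} into trace-norm tails ``at polynomial cost'' is likewise unjustified in infinite dimensions, though this is secondary.)

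The paper's proof avoids both issues by never taking the entropy of the proxy at all. It establishes $\tr{\rho^0\rho_m}\geq 1-8\varepsilon^2(m)/\E$ (via the interpolation $\normsq{\rho A}{\HSnorm}\leq\norm{\rho A}{\Trnorm}\norm{\rho A}{\Lnorm}$, playing the role of your trace-norm proxy estimate), chooses $r=r(m)$ by Assumption \ref{ass:tm} so that $\tr{\rho^0\rho_m^r}\geq 1-9C_4^2J^4\exp{-2c_4m}/\E$, and then applies the variational inequality $\tr{\rho^0\rho_m^r}\leq 1-\inf\tr{\rho^0(\id-v_r)}=\sum_{k=1}^{r}(\sigma_k^j)^2$, the infimum running over $0\leq v_r\leq\id$ with $\rankb{v_r}{j}\leq r$. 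This is the engine you are missing: it transfers the rank assumption on the proxy into a tail bound $\sum_{k>r(m)}(\sigma_k^j)^2\leq\exp{-2c_4(m-m')}$ on the Schmidt coefficients of the \emph{true} ground state, with $m'\sim\log_2(C_8/\E)$, after which the entropy bound follows from Schur concavity (maximizing entropy subject to those tails), with the three cases entering only through how fast $r(m)$ must grow. Your opening observation $\E=\tr{(\rho^0_{1,j})^3}$ is correct (and a nice way to see why $\log_2(1/\E)$ is the right currency), and your final bookkeeping $\varepsilon(l)\sim\E$, $m\sim\log_2(C_8/\E)$ reproduces the paper's asymptotics in all three cases, but without the variational step the middle of the argument does not go through.
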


\begin{proof}
    We only prove the statement for the first case, the others are analogous.
    The space $\Tr{\X}$ is a two sided ideal in $\LL{\X}$.
    Moreover, we can
    interpolate the Hilbert Schmidt norm as follows
    \begin{align*}
        \normsq{\rho A}{\HSnorm}=\normsq{A\rho}{\HSnorm}=\tr{\adj{A}\adj{\rho}
        \rho A}
        \leq \norm{\rho A}{\Trnorm}\norm{\rho A}{\Lnorm}.
    \end{align*}
    And so we arrive at an estimate as in \cite[Equation (17)]{Hastings}.
    For
    \begin{align*}
        A&:=O_B(m)O_L(m)O_R(m)\in\LL{\X},\quad
        \rho:=\rho^0_{1,j}\otimes\rho^0_{j+1,d}\in\State{\X},
    \end{align*}
    we obtain
    \begin{align*}
        \tr{\rho^0T_m}&=\tr{\rho^0A\rho\adj{A}}=\inp{u_0}{A\rho\adj{A}u_0}_{\X}
        =\inp{u_0}{A\rho\adj{A}\rho^0u_0}_{\X}\\
        &=\tr{\rho^0A\rho\adj{A}\rho^0}=\normsq{\rho^0 A\sqrt{\rho}}{\HSnorm}
        =\normsq{\rho^0(A\pm \rho^0)\sqrt{\rho}}{\HSnorm}\\
        &\geq
        \left(\norm{\rho^0\sqrt{\rho}}{\HSnorm}-
        \norm{\rho^0(A-\rho^0)\sqrt{\rho}}{\HSnorm}\right)^2
        \geq
        \left(\sqrt{\E}-\varepsilon(m)\right)^2,
    \end{align*}
    where we used the identity
    \begin{align*}
        \E=\tr{\rho^0\rho}=\tr{\rho^0\rho\rho^0}=\normsq{\rho^0\sqrt{\rho}}{\HSnorm}.
    \end{align*}

    Next, note the identity
    \begin{align*}
        A-\rho^0A=A-(\rho^0)^2+(\rho^0)^2-\rho^0A=(A-\rho^0)+\rho^0(\rho^0-A).
    \end{align*}
    Moreover,
    \begin{align*}
        \tr{(\id-\rho^0)T_m(\id-\rho^0)}&=\tr{T_m-T_m\rho^0-\rho^0T_m
        +\rho^0T_m\rho^0}
        =\tr{T_m(\id-\rho^0)}
        =\tr{(\id-\rho^0)T_m}.
    \end{align*}
    And thus
    \begin{align*}
        \tr{(\id-\rho^0)T_m}=\normsq{A-\rho^0A)\sqrt{\rho}}{\HSnorm}
        \leq 4\varepsilon^2(m).
    \end{align*}
    Combining both estimates
    \begin{align*}
        \tr{\rho^0\rho_m}&\geq\frac{\tr{\rho^0T_m}}{\tr{\rho^0T_m}+4\varepsilon^2(m)}
        =1-\frac{4\varepsilon^2(m)}{\tr{\rho^0T_m}+4\varepsilon^2(m)}
        \geq 1-\frac{4\varepsilon^2(m)}{5\varepsilon^2(m)+\E-2\varepsilon(m)
        \sqrt{\E}}\\
        &=
        1-\frac{8\varepsilon^2(m)}{6\varepsilon^2(m)+
        \E+(\sqrt{\E}-2\varepsilon(m))^2}\geq
        1-\frac{8\varepsilon^2(m)}{\E}.
    \end{align*}

    We chose $r$ in \eqref{eq:case1} such that
    \begin{align*}
        \norm{\rho_m-\rho_m^r}{\Lnorm}
        \leq C_4^2J^4\exp{-2c_4m}/\E.
    \end{align*}
    Then,
    \begin{align*}
        \tr{\rho^0\rho_m^r}&=\tr{\rho^0(\rho_m^r-\rho_m+\rho_m)}\geq
        \tr{\rho^0\rho_m}-\tr{\rho^0}\norm{\rho_m-\rho_m^r}{\Lnorm}
        \geq 1-9C_4^2J^4\exp{-2c_4m}/\E.
    \end{align*}

    Let $\rho^0_r$ denote the best rank $r$ approximation to the ground state
    projection w.r.t.\ the same bipartite cut $\X_{1,j}\otimes\X_{j+1,d}$.
    Then, since clearly $\id\geq\rho_m^r\geq 0$, we obtain
    \begin{align*}
        \tr{\rho^0\rho_m^r}&=\tr{\rho^0-\rho^0(\id-\rho_m^r)}
        \leq 1-\inf_{\substack{\rankb{v_r}{j}\leq r,\\0\leq v_r\leq\id}}
        \tr{\rho^0(\id-v_r)}
        =1-\tr{\rho^0(\id-\rho^0_r)}=\sum_{k=1}^r(\sigma_k^j)^2.
    \end{align*}
    And thus
    \begin{align*}
        \sum_{k=r+1}^\infty(\sigma_k^j)^2=1-\sum_{k=1}^r(\sigma_k^j)^2
        \leq 1-\tr{\rho^0\rho_m^r}\leq 9C_4^2J^4\exp{-2c_4m}/\E.
    \end{align*}
    The above inequality readily provides enough information about the decay
    of the singular values to derive \eqref{eq:init}.

    Note that the above rank $r$ depends on $m$, i.e., $r=r(m)$.
    Let $m'$ be minimal such that
    \begin{align*}
        9C_4^2J^4\exp{-2c_4m'}/\E\leq 1.
    \end{align*}
    Then, for
    any $m>m'$,
    \begin{align}\label{eq:constr}
        \sum_{k=r(m)+1}^\infty(\sigma_k^j)^2\leq 9C_4^2J^4\exp{-2c_4m}/\E
        &=\exp{-2c_4(m-m')}(9C_4^2J^4\exp{-2c_4m'}/\E)\notag\\
        &\leq\exp{-2c_4(m-m')}.
    \end{align}
    We can now compute the maximal possible entropy satisfying this decay
    condition. As discussed in Section \ref{sec:entapp}, since entropy
    is Schur concave, it is maximized for a uniform-like distribution, subject
    to the constraint \eqref{eq:constr}.

    Thus, for the sequence $\lambda_k:=(\sigma_k^j)^2$, we maximize the von
    Neumann entropy under the conditions
    \begin{align*}
        \sum_{k=1}^\infty\lambda_k&=1,\quad
        \sum_{k=1}^{r(m'+1)}\lambda_k=1-\exp{-2c_4},\\
        \sum_{k=r(m)+1}^{r(m+1)}\lambda_k&=(1-\exp{-2c_4})\exp{-2c_4(m-m')},\quad
        m>m'.
    \end{align*}
    Computing the upper bound for the entropy
    \begin{align*}
        \s{\rho^0_{1,j}}&\leq
        -\left(1-\exp{-2c_4}\right)\log_2\left[\frac{1-\exp{-2c_4}}{r(m'+1)}
        \right]\\
        &-\left(1-\exp{-2c_4}\right)
        \sum_{n=1}^\infty\exp{-2c_4n}\log_2\left[\frac{(1-\exp{-2c_4})
        \exp{-2c_4n}}{r(m'+n+1)-r(m'+n)+1}\right]\\
        &=\log_2[r(m'+1)]-\log_2[1-\exp{-2c_4}]-\exp{-2c_4}\log_2[r(m'+1)]\\
        &+
        [1-\exp{-2c_4}]\sum_{n=1}^\infty\exp{-2c_4n}2c_4n\\
        &+
        [1-\exp{-2c_4}]\sum_{n=1}^\infty\exp{-2c_4n}
        (\log_2[r(m'+n+1)-r(m'+n)]).
    \end{align*}
    We express $r(m)$ explicitly through $m$ using Assumption \ref{ass:tm} as
    \begin{align*}
        C_6r^{-\frac{s}{2m+6}}&\leq C_4^2J^4\exp{-2c_4m}/\E\quad
        \Rightarrow\quad r(m)&\geq
        \left(\frac{C_4^2}{C_6}J^4\right)^{-\frac{2m+6}{s}}
        \E^{-\frac{2m+6}{s}}\exp{\frac{c_4}{s}[4m^2+12m]}.
    \end{align*}
    We get similar asymptotic bounds for $r(m'+n+1)-r(m'+n)$. Taking logarithms
    and
    choosing an appropriate constant $C_7>0$, we obtain
    \begin{align*}
        \s{\rho^0_{1,j}}\leq C_7(1+(m')^2-\log_2(\E)).
    \end{align*}
    Finally, from the requirement on $m'$ we compute
    \begin{align*}
        &3C_4^2J^4\exp{-2c_4m'}/\E\leq 1,\quad
        \Rightarrow\quad m'\leq \frac{1}{2c_4}\log_2\left[
        \frac{9C_4^2J^4}{\E}\right]+1.
    \end{align*}
    Thus, again choosing an appropriate constant $C_8>0$ (and possibly
    adjusting $C_7$), we get
    \begin{align*}
        \s{\rho^0_{1,j}}\leq C_7\left(1+\log_2^2\left[\frac{C_8}{\E}\right]\right).
    \end{align*}
    The proof for the other cases is analogous.
\end{proof}

\subsection{Area Law}
We are finally able to prove an area law for the ground state: the entropy
of a chain is bounded by a constant and thus does not increase with the
dimension. We are only able to show this for the third case
\eqref{eq:case3}, since for the other two the bound on $\E$ decays too
slow to apply \eqref{eq:sl} successfully. However, we conjecture that
Theorem \ref{thm:obolor} is sufficient to show approximability directly,
without going through entropy. A further investigation of this goes beyond
the scope of this work.

\begin{theorem}[Area Law]\label{thm:alaw}
    Under Assumptions \ref{ass:op}, \ref{ass:smax} and
    provided case three \eqref{eq:case3} is valid, i.e.,
    there exists a constant $0<C_{\alaw}<\infty$, independent of $j_0$ or $d$,
    such that for any $1\leq j_0\leq d$
    \begin{align*}
        \s{\rho^0_{1,j_0}}\leq C_{\alaw}.
    \end{align*}
    The constant $C_{\alaw}$
    depends on the physical properties of $H$, such as
    gap size, interaction length, interaction strength and
    Lieb-Robinson velocity.
\end{theorem}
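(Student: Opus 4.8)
The plan is to bound $\s{\rho^0_{1,j_0}}$ by coupling the two estimates already established: the iterated mutual-information bound \eqref{eq:sl} and the single-cut estimate \eqref{eq:scase3} coming from case three. The organizing observation is that \eqref{eq:scase3}, namely $\s{\rho^0_{1,j}}\leq C_7(1+\log_2[C_8/\E])$, reduces the entire theorem to producing a strictly positive lower bound $\E\geq\E_0>0$ that is uniform in $j$ and $d$; once such an $\E_0$ is in hand one simply reads off $\s{\rho^0_{1,j_0}}\leq C_7(1+\log_2[C_8/\E_0])=:C_{\alaw}$, with the stated dependence on gap size, interaction strength, and Lieb--Robinson velocity entering through $C_7$, $C_8$ and $\varepsilon(l)$. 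It is worth recording at the outset that $\E=\tr{\rho^0[\rho^0_{1,j}\otimes\rho^0_{j+1,d}]}=\tr{(\rho^0_{1,j})^3}$ is a R\'enyi-type purity of the half-chain, so $\log_2[1/\E]$ is genuinely an entropy-like quantity of the same cut that \eqref{eq:scase3} controls.

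First I would extract from \eqref{eq:scase3} the reverse relation $\log_2[1/\E]\geq \s{\rho^0_{1,j}}/C_7-1-\log_2 C_8$: a large local entropy forces $\E$ to be small, and conversely. The idea is to insert this into the negative (quantum mutual information) term of the recursion $S_{2l}\leq 2S_l+C_5-(1-2\varepsilon(l))\log_2[1/(\E+2\varepsilon(l))]$ that produced \eqref{eq:sl}, turning the subtracted term into a quantity proportional to the entropy of the enclosing region. Since $\varepsilon(l)=C_4J^2\exp{-c_4l}$ decays exponentially, for $l$ beyond a fixed threshold the $2\varepsilon(l)$ correction inside the logarithm is negligible relative to $\E$ and may be absorbed into the constants; controlling these tails is a routine estimate of the same type already used in Lemmas \ref{lemma:ml} and \ref{lemma:ob}. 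Assumption \ref{ass:smax} supplies the crude linear-in-$l$ starting bound $S_l\leq(1+l)S_{\max}$ needed to seed the recursion.

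The heart of the argument is then a self-consistency/contraction step, carried out exactly as in \cite[Equation (10)]{Hastings}. After substitution the recursion takes the schematic form $S_{2l}\leq 2S_l+C_5-c_7 S_{2l}+\mathrm{const}$, with $c_7\sim 1/C_7$ governed by the reverse relation; rearranging yields $S_{2l}\leq\theta S_l+\mathrm{const}$ with $\theta<1$, and iterating over the dyadic scales $l=2^k$ collapses the linear volume-law growth $C_5(S_{\max}+1)l$ visible in \eqref{eq:sl} into a constant ceiling independent of $l$ and $d$. Equivalently one may argue by contradiction: were $\set{\s{\rho^0_{1,j}}}$ unbounded as $d\to\infty$, then $\E$ would be driven below every threshold, and the $-\log_2(l)\log_2[1/(\E+2\varepsilon(l))]$ term in \eqref{eq:sl} would push the right-hand side below zero while $S_l\geq0$ — a contradiction. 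Since $\s{\rho^0_{1,j_0}}\leq S_{j_0}$ by definition of $S_l$, the resulting uniform bound on $S_l$ is precisely the claimed area law.

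I expect the main obstacle to be making the iteration genuinely \emph{contract} to a constant rather than to a slowly growing quantity such as $l/\log_2(l)$, which is all that a single substitution delivers. This is exactly where case three \eqref{eq:case3} is indispensable: it produces an entropy bound that is only \emph{logarithmic} in $1/\E$, so the reverse relation $\log_2[1/\E]\gtrsim\s{}$ is \emph{linear} and the subtracted mutual-information term is strong enough to overcome the linear growth term across all scales. In cases one and two the analogous bounds \eqref{eq:init} are quadratic (respectively log-linear) in $\log_2[1/\E]$, giving only $\log_2[1/\E]\gtrsim\sqrt{\s{}}$, and the feedback is too weak to close the loop — the failure already anticipated in the text. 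A secondary, purely bookkeeping difficulty is that $\E$ pins down the \emph{half-chain} spectrum at a cut while the recursion is phrased in terms of the \emph{length-$l$ region} entropies $S_l$; I would reconcile these by applying \eqref{eq:scase3} and the mutual-information bound around a translated cut, using that the construction of $O_L,O_B,O_R$ in Theorem \ref{thm:obolor} is manifestly translation covariant, so that both estimates refer to the same region.
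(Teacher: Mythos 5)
Your high-level plan --- coupling \eqref{eq:sl} with the case-three bound \eqref{eq:scase3} and concluding from the non-negativity of entropy, with case three singled out because only there the feedback $\log_2[1/\E]\gtrsim\s{\rho^0_{1,j}}$ is linear --- is indeed the paper's plan, and your side observations are correct (in particular $\E=\tr{(\rho^0_{1,j})^3}$, and the reverse reading of \eqref{eq:scase3}). But both mechanisms you offer for closing the loop have genuine gaps. The contraction step fails for three separate reasons. (i) \eqref{eq:scase3} lower-bounds $\log_2[1/\E]$ by the \emph{half-chain} entropy $\s{\rho^0_{1,j}}$, not by the \emph{segment} entropy $S_{2l}$ appearing on the left of the recursion; these are different quantities, and translation covariance does not convert one into the other (what could relate them is pure-state subadditivity, or the window construction below). (ii) The logarithm in \eqref{eq:sl} is regularized by $2\varepsilon(l)$, so its value is capped by $\log_2\left[1/(2\varepsilon(l))\right]\approx c_4\,l\log_2(e)+\mathrm{const}$ no matter how small $\E$ is; hence you cannot subtract a term proportional to $S_{2l}$ whenever $S_{2l}\gg l$, and Assumption \ref{ass:smax} alone allows $S_{2l}\sim 2lS_{\max}$ with $S_{\max}$ arbitrary --- precisely the volume-law regime you must exclude. (iii) Even granting the schematic inequality $S_{2l}(1+c_7)\leq 2S_l+\mathrm{const}$, the factor $\theta=2/(1+c_7)$ is $<1$ only if $c_7\sim 1/C_7>1$, and nothing guarantees $C_7<1$.

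What is missing is the paper's \emph{window} argument, which is the actual engine of its proof and is also what would repair your contradiction thread. Fix $j_0$ and set $S:=\s{\rho^0_{1,j_0}}$. Pure-state subadditivity together with Assumption \ref{ass:smax} gives $\s{\rho^0_{1,k}}\geq S-(k-j_0)S_{\max}$, so \emph{every} cut $k$ in a window of length $l_0\approx\frac{1-\theta}{S_{\max}}S$ has half-chain entropy at least $S_{\cut}:=\theta S$. Via \eqref{eq:scase3} this forces $\E\leq C_8\exp{-S_{\cut}/C_7+1}$ uniformly over the window, and tuning $\theta$ makes this $\leq\varepsilon(l)$ for all scales $l\leq l_0$ --- exactly the uniformity that the iteration behind \eqref{eq:sl} needs at every cut and every dyadic scale it touches. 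Then \eqref{eq:sl} becomes $S_l\leq C_5(S_{\max}+1)l+\log_2(l)\log\left[3\varepsilon(l)\right]$, whose right-hand side is negative once $l$ exceeds a constant $L^*$ depending only on $S_{\max}$, $c_4$, $C_5$; since $S_l\geq 0$, this forces $l_0\leq L^*$. The crucial final step, which your version has no counterpart of, is that the window length was \emph{defined} proportional to $S$, so the bound $l_0\leq L^*$ converts back into $S\leq\frac{S_{\max}}{1-\theta}(L^*+1)=:C_{\alaw}$. In your contradiction, the smallness of $\E$ is tied only to the single cut $j_0$ (and phrased as $d\to\infty$, whereas the claim is a uniform bound at fixed $d$); it is never transferred to the cuts and scales the iteration actually uses, and a bound on the admissible $l$ is never converted into a bound on the entropy itself.
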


\begin{proof}
    For any pure state of a tripartite system
    $\rho\in\State{\X=\X_A\otimes\X_B\otimes\X_C}$,
    we have by sub-additivity of entropy
    \begin{align*}
        \s{\rho_A}=\s{\rho_{BC}}\leq\s{\rho_B}+\s{\rho_C}=
        \s{\rho_B}+\s{\rho_{AB}}.
    \end{align*}
    Hence, $\s{\rho_{AB}}\geq\s{\rho_A}-\s{\rho_B}$. Moreover, partial traces
    can be ``chained'', in the sense that
    $\ptr{AB}{\cdot}=\ptr{A}{\cdot}\circ\ptr{B}{\cdot}=\ptr{B}{\cdot}\circ\ptr{A}{\cdot}$.
    To see this, let $T_A\in\LL{\X_A}$ and $T_{AB}\in\LL{\X_{AB}}$.
    Then, by definition of the partial trace, for $\rho\in\State{\X}$
    \begin{align*}
        \tr{\ptr{BC}{\rho}T_A}&=\tr{\rho T_A\otimes \id_{BC}},\\
        \tr{\ptr{C}{\rho}T_{AB}}&=\tr{\rho T_{AB}\otimes\id_C},\\
        \tr{\ptr{B}{\ptr{C}{\rho}}T_{A}}&=\tr{\ptr{C}{\rho}T_A\otimes \id_B}
        =\tr{\rho T_A\otimes\id_B\otimes\id_C}
        =\tr{\rho T_A\otimes\id_{BC}}.
    \end{align*}
    Thus, for $k>j_0$ and by applying Assumption \ref{ass:smax},
    \begin{align*}
        \s{\rho^0_{1,k}}\geq\s{\rho^0_{1,j_0}}-\s{\rho^0_{j,k}}
        \geq\s{\rho^0_{1,j_0}}-(k-j_0)S_{\max}.
    \end{align*}
    Hence, for $j_0\leq k\leq j_0+l_0$ and some $\theta\in(0,1)$, where
    $l_0\leq\frac{1-\theta}{S_{\max}}\s{\rho^0_{1,j_0}}$,
    $l_0+1\geq\frac{1-\theta}{S_{\max}}\s{\rho^0_{1,j_0}}$, we have
    $\s{\rho^0_{1,k}}\geq \theta\s{\rho^0_{1,j_0}}=:S_{\cut}$.

    Applying \eqref{eq:scase3},
    \begin{align*}
        \E\leq C_8\exp{-\frac{S_{\cut}}{C_7}+1},
    \end{align*}
    where the inequality is valid for any $j\in[j_0,j_0+l_0]$.
    With this bound, we can pick the constants $\theta$ and $S_{\max}$ such
    that for $l\leq l_0$, either $\E\leq \varepsilon(l)$ or $S_{\cut}$ is
    bounded by a constant. The latter automatically bounds
    $\s{\rho^0_{1,j_0}}$,
    so we only consider the former.
    Then, equation \eqref{eq:sl}
    becomes
    \begin{align*}
        S_l\leq C_5(S_{\max}+1)l+\log_2(l)\log
        \left[3\varepsilon(l)\right],
    \end{align*}
    for any $0\leq l\leq l_0$. The positive term scales linearly in $l$
    while the negative term scales log-linearly in $l$. Thus, since entropy
    is always positive, $l_0$ can not be arbitrarily large and
    therefore, by
    definition of $l_0$, $\s{\rho^0_{1,j_0}}$ must be bounded.
    This completes the proof.
\end{proof}

\section{Summary}\label{sec:summ}
In this work, we investigated the properties of a Hamiltonian that allow for
low-rank approximability.
To this end, we have exploited the vast knowledge and experience
available in the literature on quantum entanglement. In Section
\ref{sec:entropy},
we have shown that approximability
is linked to entropy scaling. Though this
characterization is not complete, it is rather extensive for 1D systems
(TT format).
We also discussed the issue of entropy continuity in infinite
dimensions and considered a common model setting.

In Section \ref{sec:alaw}, we have shown how local interactions
in a Hamiltonian
lead to eigenfunctions, whose projectors can be well
approximated by local operators. We have also shown that, under further
assumptions on the approximand, this implies an area law for
the von Neumann entropy.

While we demonstrated the essential mathematical techniques and benefits
of the entropy approach,
many issues remain. In the following we discuss some of these questions.
Since the proofs presented in this chapter
are rather lengthy and technical, we begin by reviewing
the main steps.

\subsection*{Key Ingredients}
The starting point to derive an entropy estimate is the lower bound on
relative entropy in \eqref{eq:relent}.
The fact that we could derive \eqref{eq:relent}
relies on \eqref{eq:finallemma}, i.e., the eigenfunctiion projection
$\rho^0$ is essentially a product
of three local operators. This, in turn, is directly implied by the local
structure of $H$, see Assumptions \ref{ass:op}. Note that it is not essential
that we considered approximating $\rho^0$: any part of the spectrum of $H$ would
do, as long as it is separated from the rest of the spectrum.

The derivation of \eqref{eq:finallemma}
essentially relies on the spectral decomposition of $H$ and the
fact that we can express parts of $H$ through commutators. At this point the
locality of $H$ comes in, since these commutators reduce to local operators
with small support. The approximating arguments rely on
Lieb-Robinson type
bounds, i.e., support/information has a finite propagation speed depending
solely on $H$.

Having derived the bound on relative entropy in \eqref{eq:relent},
the last step is to
show that this lower bound scales sufficiently fast,
such that \eqref{eq:sl}
can not be valid for values of entropy that are too large.
The scaling of the lower
bound is then determined by the expectation value
$\E=\tr{\rho^0[\rho^0_{1,j}\otimes\rho^0_{j+1,d}]}$, which was bounded in Lemma
\ref{lemma:initbnd}. Note, that since $\rho^0=\rho^0_{1,d}$ is a pure state,
it is factorized if and only if $\uu_0$ is rank one. Thus,
$\E$ can be seen as a measure of entanglement for $\rho^0$ w.r.t.\
the bipartite cut $\X=\X_{1,j}\otimes\X_{j+1,d}$.

Although all
three bounds seem to have similar asymptotic behavior, only the last
one yields an area law. The reason is that in \eqref{eq:sl} we consider
$C_1l-C_2\log_2(l)l$ and argue that this becomes negative. If we would have
$C_1l-C_2\log_2(l)l^p$ for any $p<1$, the argument fails. This delicate
balance, thus, restricts us to \eqref{eq:scase3}.

\subsection*{Assumptions on the Hamiltonian}
We have already discussed Assumption \ref{ass:op} in Remark \ref{rem:ass}.
Local interactions and a gap are
necessary. We could have considered any part of the spectrum, not
necessarily the ground state, as long as we have a gap above and
possibly below.
For non-local interactions, one could derive similar estimates,
given
sufficient decay of interaction strength for long-range interactions.

Self-adjointness is a technical assumption,
necessary for the spectral decomposition. This can be perhaps generalized.

Finite interaction strength is necessary for a finite support propagation
speed. However, since we only require the application of Lieb-Robinson bounds,
any interaction that admits such bounds would work. It is known that
this is not possible for any unbounded interaction (see \cite{SuperSonice}).
Nonetheless, some unbounded interactions
can be controlled
(see \cite{LBInfDim, LRComm})
such that the finite interaction strength can be generalized.

The weakest point is, however, the requirement made in
Assumption \ref{ass:tm}. In essence,
$O_B$ was constructed by approximating the spectrum of $H$. However, we were
not explicit in the construction such that it is not clear to us what effect
$O_B$ has on the low-rank structure of $\rho_m$.
An explicit
construction of $O_B$ may be possible due to \cite[Lemma 3.2]{NearComm}.
Although we believe that
\eqref{eq:relent} is a necessary step to bound the entropy, we are not certain
if Lemma \ref{lemma:initbnd} could be entirely avoided.

The result of
Theorem \ref{thm:obolor} is interesting in its own right as it is already a
statement about approximability and avoids many seemingly artificial
assumptions required for the final area law in Theorem \ref{thm:alaw}.
Theorem \ref{thm:obolor}
essentially states
\begin{align*}
    \rho^0\approx O_BO_LO_R,
\end{align*}
where $O_LO_R$ is rank-one, $O_B$ has an overlapping support of size
$l$ and the error bound depends exponentially on $l$ only, not on the
full dimension $d$. For the final low-rank approximation, we would have to
approximate $O_B$ with low-rank. Intuitively, since $O_B$ has support size $l$,
a low-rank approximation to $O_B$ has a complexity scaling at worst
exponentially in $l$, not $d$. Indeed, if the underlying Hilbert spaces
are finite-dimensional, this is trivially satisfied. However,
it is not clear to us how to express this correctly if the Hilbert spaces
are infinite dimensional --- hence the need for Assumption \ref{ass:tm}.

\subsection*{Electronic Schrödinger Equation}
Finally, we mention limitations concerning the well known electronic
Schrödinger
equation:
\begin{align}\label{eq:H}
    H=K+V=-\frac{1}{2}\sum_{k=1}^N\Delta_k-\sum_{k=1}^N
    \sum_{\nu=1}^M\frac{Z_\nu}{|x_k-a_\nu|}+\frac{1}{2}\sum_{k,j=1}^N
    \frac{1}{|x_k-x_j|},
\end{align}
where $Z_\nu$ and $a_\nu$ are the nuclei charges and positions, respectively,
$x_1,\ldots,x_N\in\R^3$.
Clearly, the last term in the potential is not local. Moreover,
due to the singularity in the potential, the interactions are only
\emph{relatively} bounded (see \cite[Chapter 4.1.1]{Kato}).
Thus, Assumption
\ref{ass:op} does not apply.

We may still obtain
a bound as in \eqref{eq:finallemma}, if the interactions are sufficiently small
for large $r_{kj}$, where $r_{kj}$ measures the distance between the sites
$k$ and $j$. The different electrons in \eqref{eq:H} interact
equally\footnote{By the nature of the described physical phenomenon, there is
no underlying lattice-like structure.} for any $|k-j|$ so that we do not see how to extend the results to
this case.
We refer to \cite{LongRange}
for area laws with long range interactions.
On the other hand, Lieb-Robinson bounds could be perhaps extended to
relatively bounded potentials.
We mention
\cite{LBInfDim, LRComm} for Lieb-Robinson bounds for unbounded interactions.

\subsection*{Entropy Measures}
When switching to the infinite-dimensional regime, the von Neumann entropy
is no longer necessarily continuous or finite. In fact, it is discontinuous for
`most' states in $\Tr{\X}$.

In Proposition
\ref{prop:finentropy} we have seen that entropy is finite if
the expected energy is finite and the Gibbs state exists for any inverse
temperature. For eigenstates the energy is obviously finite.
The existence of a Gibbs state is more restrictive, though certainly there are
many examples where this is true. This is the case if the ionization
threshold diverges. The spectrum is discrete in the presence
of a diverging (confinement) potential or if
the domain is bounded (e.g., infinite potential well).

However, for instance, the spectrum of
\eqref{eq:H} is not purely discrete (see \cite[Thm. 5.16]{Harry}), since
the ionization threshold remains bounded
and thus the Gibbs state does not exist in this case.
This suggests that, at the very least, other entropy measures are
worth a consideration for
PDEs. Indeed, the question of possible alternative entropy measures,
particularly for infinite dimensions, has been previously addressed.
We refer to, e.g., \cite{QuantEnt, RelentInfdim, PlenioInfDim, SqInt,
PlenioIntro, Exchange, Pure, Cf} for more details.

\subsection*{General Right-Hand-Side}
In this work we considered the low-rank structure of eigenfunctions. In general,
both in application and approximation theory, one would be interested in
general right-hand-sides. Put precisely, given that the right-hand-side
is low-rank, does the same hold for the solution?
We are only aware of one work that addressed this question for PDEs
in \cite{DahmenTS}. In \cite{AndreNNI} the authors successfully utilized area laws
for spin systems as in \cite{AradSubExp}, to derive low-rank approximability estimates
for discretized PDEs with a general right hand side.

In \cite{DahmenTS} the authors considered a Laplace-like PDE operator. In particular,
no interactions are involved. For such an operator the eigenfunctions are
rank one tensor products. The authors use an explicit representation
for the inverse operator and an exponential sum approximation for the
inverse eigenvalues. The trick is then to show that this approximation does
not significantly increase ranks, in the sense that the solution is
in a slightly worse approximation class than the right hand side.

In the spirit of \cite{AndreNNI}, one could try to extend results from
spectrum approximability to approximability of general solutions. Possibly
utilizing ideas as in \cite{DahmenTS}, i.e., the eigenfunctions are no longer
rank-one but are TT-approximable. However, this is a far from trivial task and
will probably require stronger restrictions on the operator structure.

\subsection*{Beyond the TT Format}
We analyzed approximability of 1D systems within the MPS/TT format. It is
generally known that the approximation format should fit the PDE structure
if one is to obtain good approximation results. By now an entire variety of
higher-dimensional tensor structures is available, see, e.g., \cite{TN} for
a recent overview. A natural question is thus: to what extent does the
above apply to multi-dimensional systems?

Even though multi-dimensional formats can be very successful for tailored
applications, a general theory of approximability seems elusive. Firstly,
general tensor networks
with loops are not closed (see \cite{NotClosed}).

Secondly, on one hand, the holographic principle seems robust w.r.t.\ the
dimension: area laws hold in a thermal equilibrium in any
dimension \cite{AlawNd}.
On the other hand,
area laws are insufficient to describe approximability in higher dimensions.
In, e.g., \cite{AlawCritical} multi-dimensional bosonic
systems in quantum critical states
were shown to satisfy area laws.

Thus, it seems an approximation theory of multi-dimensional systems would
require physical and mathematical ideas fundamentally different from the
ones applied in this work...

\bibliographystyle{acm}
\bibliography{literature}

\end{document}